\def\ps@pprintTitle{%
 \let\@oddhead\@empty
 \let\@evenhead\@empty
\def\@oddfoot{\centerline{\thepage}}%
 \let\@evenfoot\@oddfoot}
 \newcommand{\grad}{\triangledown}
\theoremstyle{plain}
\newtheorem{thm}{Theorem}[section]
\theoremstyle{plain}
\newtheorem{lem}[thm]{Lemma}
\newtheorem{prop}[thm]{Proposition}
\theoremstyle{definition}
\newtheorem{defi}{Definition}[section]
\newtheorem{rem}{Remark}[section]
\newtheorem*{maintheorem*}{Main Theorem}
\newtheorem*{maincorollary*}{Main Corollary}
\newenvironment{Assumptions}
{%
\setcounter{enumi}{0}

\begin{enumerate}}%
{\end{enumerate} }
\newenvironment{Assumptions2}
{
\setcounter{enumi}{0}

\begin{enumerate}}
{\end{enumerate} }
\newcommand{\norm}[1]{\ensuremath{\left\|#1\right\|}}
\newcommand{\abs}[1]{\ensuremath{\left|#1\right|}}
\newcommand{\goto}{\ensuremath{\rightarrow}}
\newcommand{\eps}{\ensuremath{\epsilon}}
\newcommand{\R}{\ensuremath{\mathbb{R}}}
\newcommand{\rd}{\ensuremath{\R^d}}
\newcommand{\supp}{\ensuremath{\mathrm{supp}\,}}
\numberwithin{equation}{section} \allowdisplaybreaks
\journal{preprint}
\begin{document}

\begin{frontmatter}

\title{Continuous dependence estimate for conservation laws with L\'{e}vy noise  }

\author[Imran H. Biswas]{Imran H. Biswas\corref{cor}}
\address[Imran H. Biswas]{
 Centre for Applicable Mathematics,
 Tata Instiute of Fundamental Research,
  P.O.\ Box 6503,
  Bangalore 560065, India}
    \ead{imran@math.tifrbng.res.in}
\cortext[cor]{Corresponding author.}
\author[Imran H. Biswas]{Ujjwal Koley}
   \ead{ujjwal@math.tifrbng.res.in}

\author[Imran H. Biswas]{Ananta K. Majee}
   \ead{majee@math.tifrbng.res.in}

\begin{abstract}We are concerned with multidimensional stochastic balance laws driven by L\'{e}vy processes. 
 Using bounded variation (BV) estimates for vanishing viscosity approximations, we derive
 an explicit continuous dependence estimate on the nonlinearities of the 
 entropy solutions under the assumption that L\'{e}vy noise only depends on the solution. This result is used to show the error estimate for 
 the stochastic vanishing viscosity method. In addition, we establish fractional $BV$ estimate for vanishing viscosity
approximations in case the noise coefficient depends on both the solution and spatial variable.
\end{abstract}

\begin{keyword}
Conservation laws,  stochastic forcing, L\'{e}vy noise,
 stochastic entropy solution, stochastic partial differential equations, Kru\v{z}kov's entropy.

\MSC[2000]{45K05, 46S50, 49L20, 49L25, 91A23, 93E20}

\end{keyword}

\end{frontmatter}

\section{Introduction} The last couple of decades have witnessed remarkable advances in the studies of partial differential equations with noise/randomness. A vast literature is now available on the subject of stochastic partial differential equations (SPDEs) and the particular frontier involving hyperbolic conservation laws with noise has had its fair share of attention as well. However, this is still very much a developing story and there still a number of issues waiting to be explored. In this paper, we aim at deriving continuous dependence estimates based on nonlinearities for stochastic conservation laws driven by multiplicative L\'{e}vy noise.  A formal description of our problem requires a filtered probability space $\big(\Omega, P, \mathcal{F}, \{\mathcal{F}_t\}_{t\ge 0} \big)$ and we are interested in an $L^p(\R^d)$-valued predictable process $u(t,\cdot)$ which satisfies the Cauchy problem
 \begin{equation}
 \label{eq:levy_stochconservation_laws}
 \begin{cases} 
 du(t,x) + \mbox{div}_x F(u(t,x))\,dt
 =\int_{|z|> 0} \eta( u(t,x);z) \, \tilde{N}(dz,dt), & \quad x \in \Pi_T, \\
 u(0,x) = u_0(x), & \quad  x\in \R^d,
\end{cases}
\end{equation}
where $\Pi_T=(\R^d \times (0,T))$ with $T>0$ fixed. The initial condition $u_0(x)$ is a given function on $\R^d$, and $F:\R \mapsto \R^d$ is given (sufficiently smooth) vector valued flux function (see Section~\ref{tech-frame-main-result} for the complete list of assumptions).  The right hand side of \eqref{eq:levy_stochconservation_laws} represents the noise term and it is composed of a compensated Poisson random measure $ \tilde{N}(dz,dt)= N(dz,dt)-\nu(dz)\,dt $, where $N$ is a Poisson random 
measure on $\R\times (0,\infty)$  with intensity measure $\nu(dz)$, and the jump amplitude (integrand) $\eta(u, z)$ is real valued function signifying the multiplicative nature of the noise.

   Hyperbolic conservation laws are used to describe a large number of physical phenomenon from areas such as physics, economics, biology etc. The inherent uncertainty in such phenomenon prompts one to account for the same and consider random perturbation of conservation laws. As an important first step into the subject, a significant body of literature has grown around conservation laws that are perturbed by Brownian white noise.  However, due to the complex nature of the uncertainties,  it is only natural to look beyond Brownian white noise settings and consider problems with more general type of noise. We do that in this paper in the problem \eqref{eq:levy_stochconservation_laws} by introducing Poisson noise in the right hand side. It is also mentioned the result of this paper could be extended to the general L\'{e}vy noise case.

In the case $\eta=0$, the equation \eqref{eq:levy_stochconservation_laws} becomes a standard conservation laws in $\R^d$.
For the deterministic conservation laws, well-posedness analysis has a very long tradition and it goes back to the $1950$s. 
However, we will not be able to discuss the whole literature here, but only refer to the parts that are pertinent to the current paper.  
The question of existence and uniqueness of solutions of conservation laws was first settled in the pioneer papers of  Kru\v{z}kov \cite{kruzkov} and Vol'pert \cite{Volpert}. For a completely satisfactory well-posedness theory of conservation laws, 
we refer to the monograph of Dafermos \cite{dafermos}. See also \cite{godu} and references therein. 

\subsection{Stochastic balance laws driven by Brownian white noise }    

As has been mentioned, evolutionary SPDEs with L\'{e}vy noise has been the topic of interest of many authors lately, and new results are emerging faster than ever before. However,  the study of stochastic balance laws driven 
by noise has so far been limited to equations that are driven by Brownian white noise  and a satisfactory well-posedness theory is
available by now. 

Observe that when the noise is of additive nature, a change of variable reduces equation 
into a hyperbolic conservation law with random flux which could be analyzed with deterministic 
techniques. In fact, Kim \cite{KIm2005} extended Kru\v{z}kov's entropy formulation to establish
the well-posedness of one dimensional stochastic balance law.

However, when the noise is of multiplicative nature, one could not apply a straightforward 
Kru\v{z}kov's doubling method to get a $L^1$-contraction principle as in \cite{kruzkov}. The main difficulty lies in doubling 
the \emph{time} variable which gives rise to stochastic integrands that are anticipative and hence the stochastic integrals in the sense 
of It\^{o}-L\'{e}vy would not make sense. Hence, it fails to capture a specific ``noise-noise'' interaction term relating two 
entropy solutions. This issue was first resolved by Feng $\&$ Nualart \cite{nualart:2008} with the introduction of additional 
condition, which captures the missing ``noise-noise'' interaction term, the so called  {\em strong stochastic entropy solution}. They used $L^p$ framework to prove the 
multidimensional uniqueness result for strong stochastic entropy solution.  However, existence was restricted to \emph{one space dimension}
since their proof of existence  was based on a stochastic version of  \emph{compensated compactness} argument
applied to vanishing viscosity approximation of the underlying problem. To overcome this problem, Debussche $\&$ Vovelle \cite{Vovelle2010} introduced
kinetic formulation of such problems and as a result they were able to established the wellposedness
of multidimensional stochastic balance law via kinetic approach. At around the same time, Chen $\&$ Karlsen \cite{Chen-karlsen2012} also
established multidimensional wellposedness of strong entropy solution in $L^p \cap BV$, via $BV$ framework. Moreover, they were able to
develop continuous dependence theory for multidimensional balance laws and, as a by product, they derived an explicit \emph{convergence
rate} of the approximate solutions to the underlying problem. We also mention that, using the concept of measure valued solutions and Kru\v{z}kov's semi-entropy 
formulations, a result of existence and uniqueness of the entropy solution has been obtained by Bauzet. et. al. in \cite{vallet}. 

In the article \cite{nualart:2008},
the authors used an entropy formulation which is  strong in time but weak in space, which is in our view may give rise to problems
where the solutions are not shown to have continuous sample paths. We refer to \cite{biswas-majee 2013}, where a few technical
questions are raised and remedial measures have been proposed. We also mention that Weinen et. al. \cite{Sinai1997} published a very influential article describing the existence, uniqueness
and weak convergence of invariant measures for one dimensional Burger's equation with stochastic forcing which is periodic in $x$.

\subsection{Stochastic balance laws driven by L\'{e}vy noise}    
Despite relatively large body of research on stochastic partial differential equations that are driven by
L\'{e}vy noise, to the best of our knowledge, very little is available on the specific problem of 
conservation laws with L\'{e}vy noise. In fact, the first attempt were made
to build a comprehensive theory on such problems in a very recent article by Biswas. et. al. \cite{kbm2014}.
For a detailed introduction to the SPDEs driven by L\'{e}vy processes, we refer
to the monograph by Peszat. et.al. \cite{peszat} and references therein. 
Roughly speaking, the theory developed in \cite{peszat} covers semi linear 
parabolic equations driven by L\'{e}vy noise, which could be treated as stochastic evolution equations in some infinite 
dimensional Banach or Hilbert space, and typically the solutions of such equations enjoy regularizing properties. However, we can't emulate those
techniques on the specific problem of conservation laws driven by  L\'{e}vy noise due to the intrinsic discontinuous 
nature of the solution.

In fact, independent of the smoothness of the initial data $u_0(x)$, due to the presence of nonlinear flux term  in
equation \eqref{eq:levy_stochconservation_laws}, 
solutions to \eqref{eq:levy_stochconservation_laws} are not necessarily smooth and weak solutions must be sought. 
Before introducing the concept of weak solutions, we first assume that 
 the  filtered probability space $\big(\Omega, P, \mathcal{F}, \{\mathcal{F}_t\}_{t\ge 0} \big)$ satisfies 
the usual hypothesis, i.e.,  $\{\mathcal{F}_t\}_{t\ge 0}$ is a right-continuous filtration such that $\mathcal{F}_0$ 
contains all the $P$-null subsets of $(\Omega, \mathcal{F})$. Moreover,
by a predictable $\sigma$-field on $[0,T]\times \Omega$, denoted
by $\mathcal{P}_T$, we mean that the 
$\sigma$-field generated by the sets of the form: $ \{0\}\times A$ and $(s,t]\times B$  for any $A \in \mathcal{F}_0; B
\in \mathcal{F}_s,\,\, 0<s,t \le T$.

The notion of weak solution is defined as follows:
\begin{defi} [weak solution]
\label{defi:stochweaksol}
An $ L^2(\R^d)$-valued $\{\mathcal{F}_t: t\geq 0 \}$-predictable stochastic process $u(t)= u(t,x)$ is
called a stochastic weak solution of \eqref{eq:levy_stochconservation_laws} if for all non-negative test functions $\psi \in C_c^{\infty}([0,T) \times \R^d)$,
\begin{align}
\int_{\R^d} \psi(0,x) u(0,x)\,dx & + \int_{\R^d} \int_{0}^T \Big\{ \partial_t \psi(t,x) u(t,x)
+  F(u(t,x)) \cdot \nabla_x \psi(t,x) \Big\}\,dx\,dt \notag \\ 
&\qquad + \int_{t=0}^T\int_{|z|>0}\int_{\R^d}  \eta( u(t,x);z) \psi(t,x) \,dx\,\tilde{N}(dz,dt) = 0, \quad P-\text{a.s}.
\label{3w1}
\end{align}
\end{defi} 

However, it is well known that weak solutions may be discontinuous and they
are not uniquely determined by their initial data. Consequently, an entropy condition must be
imposed to single out the physically correct solution. Since the notion of entropy solution is built 
around the so called entropy-entropy flux pairs, we begin with the definition of  entropy-entropy flux pairs.
\begin{defi}[entropy-entropy fux pair]
An ordered pair $(\beta,\zeta) $ is called an entropy-entropy flux pair if $ \beta \in C^2(\R) $ with $\beta \ge0$, and $\zeta = (\zeta_1,\zeta_2,....\zeta_d):\R \mapsto\rd $ is a vector field satisfying
\begin{align*}
\zeta'(r) = \beta'(r)F'(r), \,\,\text{for all r}.
\end{align*}
Moreover, an entropy-entropy flux pair $(\beta,\zeta)$ is called convex if $ \beta^{\prime\prime}(\cdot) \ge 0$.  
\end{defi}
With the help of a convex entropy-entropy flux pair $(\beta,\zeta)$, the notion of stochastic entropy solution is defined as follows:
\begin{defi} [stochastic entropy solution]
\label{defi:stochentropsol}
An $ L^2(\rd)$-valued $\{\mathcal{F}_t: t\geq 0 \}$-predictable stochastic process $u(t)= u(t,x)$ is
called a stochastic entropy solution of \eqref{eq:levy_stochconservation_laws} provided \\
(1) For each $ T>0$, $p=2,3,4,\cdots,$ \[\sup_{0\leq t\leq T} E\Big[||u(t, \cdot)||_{p}^{p} \Big] < \infty. \] 
(2) For all test functions $0\leq \psi\in C_{c}^{1,2}([0,\infty )\times\rd) $, and each convex entropy pair 
$(\beta,\zeta) $, 
\begin{align}
&  \int_{\R_x^d} \psi(0,x) \beta(u(0,x))\,dx + \int_{\Pi_T} \Big\{ \partial_t \psi(t,x) \beta(u(t,x))
+  \zeta(u(t,x)) \cdot \nabla_x \psi(t,x) \Big\}\,dx\,dt \notag \\ 
 +& \int_{r=0}^T\int_{|z|>0}\int_{\R_x^d} \Big(\beta\big(u(r,x) +\eta(u(r,x);z)\big)- \beta(u(r,x)\Big)\psi(r,x)\,dx\,\tilde{N}(dz,dr) \notag \\
 + & \int_{\Pi_T}\int_{|z|>0} \Big(\beta\big(u(r,x) +\eta(u(r,x);z)\big)- \beta(u(r,x))-\eta(u(r,x);z)\beta^\prime(u(r,x)) \Big)\psi(r,x)\,\nu(dz)\,dr\,dx \notag \\
  & \ge 0 \quad P-\text{a.s.} \notag
\end{align}
\end{defi} 

Due to the nonlocal nature of the entropy ineaualities  and the noise-noise interaction, the Definition~\ref{defi:stochentropsol} 
alone does not seem to  give the $L^1$-contraction principle in the sense of average and hence the uniqueness is not immediate. In other words, classical 
``doubling of variable'' technique in time variable does not work when one tries to compare directly two entropy solutions defined in the sense of 
Definion~\ref{defi:stochentropsol}. To overcome this problem, the authors in \cite{vallet,kbm2014} used a more direct approach
by comparing one entropy solution against the solution of the regularized problem and subsequently sending the regularized 
parameter to zero, relying on ``weak compactness'' of the regularized approximations.

In order to successfully implement the direct approach, one needs to weaken the 
notion of stochastic entropy solution, and subsequently install the notion of so called generalized entropy solution (cf. \cite{vallet,kbm2014}).
\begin{defi} [generalized entropy solution]
\label{defi: young_stochentropsol}
An $ L^2\big(\R^d \times (0,1)\big)$-valued $\{\mathcal{F}_t: t\geq 0 \}$-predictable stochastic process $v(t)= v(t,x,\alpha)$ is
called a generalized stochastic entropy solution of \eqref{eq:levy_stochconservation_laws} provided \\
(1) For each $ T>0$, $ p=2,3,4, \cdots,$ \[\sup_{0\leq t\leq T} E\Big[||v(t,\cdot,\cdot)||_{p}^{p}\Big] <
\infty. \] 
(2) For all test functions $0\leq \psi\in C_{c}^{1,2}([0,\infty )\times\R^d)$, and each convex entropy pair 
$(\beta,\zeta) $, 
\begin{align}
&  \int_{\R_x^d} \psi(0,x)\beta(v(0,x))\,dx + \int_{\Pi_T} \int_{\alpha=0}^1  \Big( \partial_t \psi(t,x)\beta(v(t,x,\alpha)) + 
 \zeta(v(t,x,\alpha))\cdot\nabla_x \psi(t,x) \Big) \,d\alpha\,dx\,dt \notag \\ 
 +&  \int_{r=0}^T\int_{|z|>0}  \int_{\R_x^d} \int_{\alpha=0}^1 \Big( \beta \big(v(r,x,\alpha) +\eta(v(r,x,\alpha);z)\big)- \beta(v(r,x,\alpha))\Big)
 \psi(r,x)\,d\alpha \tilde{N}(dz,dr)\,dx \notag\\
 + & \int_{r=0}^T \int_{|z|>0} \int_{\R_x^d} \int_{\alpha=0} ^1  \Big( \beta \big(v(r,x,\alpha) +\eta(v(r,x,\alpha);z)\big)-
\beta(v(r,x,\alpha))-\eta(v(r,x,\alpha);z) \beta^{\prime}(v(r,x,\alpha)) \Big) \notag \\
& \hspace{4cm} \times \psi(r,x)\,d\alpha\,dx \, \nu(dz)\,dr\notag \\
 \ge  &  0  \quad P-\text{a.s.} \notag 
\end{align}
\end{defi}
As we mentioned earlier,  in a recent article \cite{kbm2014},  the authors established well-posedness 
along with few a priori estimates for the viscous problem 
with L\'{e}vy noise and proved the existence and uniqueness of generalized entropy solution 
for multidimensional Cauchy problem \eqref{eq:levy_stochconservation_laws} via Young measure approach.
Finally, we mention that Dong and Xu \cite{xu} established the global 
well-posedness of strong, weak and mild solutions for one-dimensional viscous Burger's equation driven by Poisson process with 
Dirichlet boundary condition via Galerkin method. Also, they proved the existence of invariant measure of the solution.

\subsection{Scope and outline of this paper} 
The above discussions clearly highlights the lack of stability estimates for the entropy solutions of
stochastic balance laws driven by L\'{e}vy noise. In this paper, drawing preliminary motivation
from \cite{Chen-karlsen2012}, we intend to 
develop a continuous dependence theory for stochastic entropy solution which in turn can be used to
derive an error estimate for the vanishing viscosity method. However,  it seems difficult to develop such a theory
without securing a BV estimate for stochastic entropy solution. 
As a result, we first address the question of existence, uniqueness of stochatic BV- entropy solution in 
$L^p(\R^d) \cap BV (\R^d)$ of the problem \eqref{eq:levy_stochconservation_laws}. Making use of 
the crutial BV estimate, we provide a continuous depenece estimate and error estimate for the vanishing viscosity method 
provided initial data lies in $u_0 \in L^p(\R^d) \cap BV(\R^d)$. 

Finally, we turn our discussions to more general stochastic balance laws driven by  L\'{e}vy processes, namely
when the function $\eta$ in the L\'{e}vy noise term  has explicit dependency on the spatial position $x$ as well.  
In view of the discussions in \cite{ Chen-karlsen2012}, in this case we can't expect BV estimates, but instead 
a fractional BV estimate is expected. However, that does not prevent us to provide an existence proof
for more general class of equations in $L^p(\R^d)$.

The remaining  part of this paper is organized  as follows: we collect all the assumptions needed in the subsequent analysis,
results for the regularized problem and finally state
the main results in Section \ref{tech-frame-main-result}. In Section~\ref{sec:apriori}, 
we prove uniform spatial BV estimate for the solution of vanishing
viscosity approximation of \eqref{eq:levy_stochconservation_laws}, and thereby establishing $BV$ bounds for entropy solutions. Section 
 \ref{cont-depen-estimate} deals with the continuous dependence estimate, while Section~\ref{sec:error_estimate} deals with the error estimate.  
Finally, in Section~\ref{sec:frac}, we establish a fractional $BV$ estimate for a larger class of stochastic balance laws.
      
\section{Preliminaries}
\label{tech-frame-main-result}
We mention that, throughout this paper we use $C, K$ to denote a generic constants; the actual values
of $C, K$ may change from one line to the next during a calcuation.
The Euclidean norm on any $\mathbb{R}^d$-type space is
denoted by $|\cdot|$ and the norm in $BV(\R^d)$ is denoted by $|\cdot|_{BV(\rd)}$.      

Next, we collect all the basic assumptions on the data of the problem \eqref{eq:levy_stochconservation_laws}.
\begin{Assumptions}
 \item \label{A1}  The initial function $u_0(x)$ is a  $\cap_{p=1,2,..} L^p(\rd)$-valued  $\mathcal{F}_0$-measurable random variable  satisfying
\[E \Big[||u_0||_{p}^{p} + ||u_0||_{2}^{p} + |u_0|_{BV(\R^d)}\Big] < \infty  \qquad\text{for}~ p=1,2,...~ .\]

 \item \label{A2} For every $ k = 1,2...,d $, the functions $F_k(s) \in C^2(\R) $, and $F_k(s), F_k^\prime(s) $ and $F_k^{\prime\prime}(s)$ 
 have at most polynomial growth in $ s$. 
 
\item \label{A3} There exist  positive constants   $0<\lambda^* <1$  and  $ C>0$,  such that 
$ ~\text{for all}~  u,v \in \R;~~z\in \R$
 \begin{align*} | \eta(u;z)-\eta(v;z)|  & \leq  \lambda^* |u-v|( |z|\wedge 1) \\
  \text{and}\quad|\eta(u;z)| & \le C(1+|u|)(|z|\wedge 1).
 \end{align*}
 

\item \label{A4'}  To prove existence and uniqueness of solutions, we assume that the L\'{e}vy measure $\nu(dz)$ which has a possible singularity at $z=0$, satisfies 
\begin{align*}
 \int_{|z|>0} (1\wedge |z|^2)\,\nu(dz) <  + \infty.
 \end{align*}
    
\end{Assumptions}
\begin{rem}
 Note that we need the assumption \ref{A2} as a result of the requirement that the 
 entropy solutions satisfy $L^p$ bounds for all $p\ge 2$, which in turn forces us to choose initial data satisfying ~\ref{A1}.
 However, it is possible to get entropy solution for initial data 
in $L^2(\R^d) \cap BV(\R^d)$, provided the  given flux function is globally Lipschitz. The assumption ~\ref{A3} is natural in the context of L\'{e}vy noise with the exception of $\lambda^*\in (0,1)$, which is necessary for the uniqueness.
Finally, the assumptions \ref{A1}-\ref{A4'} collectively ensures existence and uniqueness of stochastic entropy solution,  and the continuous dependence estimate as well. \end{rem}
To this end, for any given fixed $\epsilon > 0$, we consider the viscous perturbation of \eqref{eq:levy_stochconservation_laws}
  \begin{equation}
  \label{eq:levy_stochconservation_laws-viscous}
  \begin{split}
  du_\eps(t,x) + \mbox{div}_x F_\eps(u_\eps(t,x)) \,dt &=  \int_{|z|> 0} \eta_\eps(u_\eps(t,x);z)\,\tilde{N}(dz,\,dt)+ \eps \Delta_{xx} u_\eps \,dt,\,\, t>0, ~ x\in \R^d, \\
  u(0,x) &= u_\eps(0,x),  \,\, x \in \R^d,
\end{split}
\end{equation}
where $u_\eps(0,x)$ is a smooth approximation of initial data $u_0(x)$ such that
\begin{align}
 E\Big[\int_{\R_x^d}|u_\eps(0,x)|^p\,dx\Big] \le E\Big[\int_{\R_x^d}|u_0(x)|^p\,dx\Big].
\end{align} 
Moreover, if initial data $u_0(x) \in BV(\R^d)$, then 
\begin{align}
 E\Big[\int_{\R_x^d}| \grad u_\eps(0,x)|\,dx\Big] \le E\Big[\int_{\R_x^d}| \grad u_0(x)|\,dx\Big].
\end{align}
Furthermore, mainly to ease the presentation throught this paper, we assume that $F_\eps, \eta_{\eps}$ are ``sufficiently smooth'' approximations 
of $F$ and $\eta$ respectively. More specifically, we require that $F_\eps$ and $\eta_\eps$ satisfy the same properties as $F$ and $\eta$ respectively (cf.~\ref{A2}--~\ref{A3}) and
\begin{align}
 |F_\eps(r)-F(r)| &\le C\eps(1+|r|^{p_0}),\,\, \text{for some}\,\, p_0 \in \mathbb{N}, \notag\\
  |\eta_\eps(u;z) -\eta(u;z)| &\le C\eps(1+|u|) (1\wedge |z|).\label{eq:regularization}
\end{align} 
Observe that, in view of \cite[Subsection $3.2$]{kbm2014}, these properties of $F_{\eps}$ and $\eta_{\eps}$ are justified.

For the deterministic counterpart of \eqref{eq:levy_stochconservation_laws-viscous}, proof of existence of global smooth solutions is classical by now. Same techniques could be used, mutatis mutandis, also for the stochastic scenario to establish the existence. More precisely, we have the following proposition from \cite{kbm2014}.
\begin{prop} 
\label{prop:vanishing viscosity-solution}
Let the assumptions \ref{A1}, \ref{A2}, \ref{A3}, and \ref{A4'} hold and $\eps>0$ be a given positive number. Then there exists a unique $C^2(\R^d)$-valued predictable process $u_\eps(t,\cdot)$ which solves the initial value problem \eqref{eq:levy_stochconservation_laws-viscous}.
Moreover, 
\begin{itemize}
 \item [$(a)$] The solution  $u_\eps(t,x)$ satisfies, almost surely,
 \begin{align}
   u_\eps(t,x) = &\int_{\R_y^d} G(t,x-y)u_0(y) dy - \int_{s=0}^t \int_{\R_y^d} G(t-s,x-y)\grad \cdot F_{\eps}(u_\eps(s,y)) \,dy\,ds \notag\\
    & + \int_{s=0}^t \int_{|z| > 0}\int_{\R_y^d} G(t-s, x-y) \eta( u_\eps(s,y);z)\,dy\,\tilde{N}(dz,ds), \notag \end{align} 
 where $G(t,x)$ is the heat kernel associated with the operator $\eps \Delta_{xx}$ i.e., 
 \begin{align*}
 G(t,x):= G_\eps(t,x) = \frac{1}{(4\pi \eps t)^{\frac{d}{2}}} e^{\frac{-|x|^2}{4\eps t}},\quad t> 0.
 \end{align*}
 \item [$(b)$]  For  positive integer $p=1,2,3,\cdots,$ and $T>0$
 \begin{align}
   \sup_{\eps > 0} \sup_{0\le t\le T} E\Big[ ||u_\epsilon (t,\cdot)||_p^p\Big] < \infty.\label{uniform-estimates}
 \end{align}
 \item [$(c)$] For a  function $ \beta \in C^2(\R)$  with $ \beta,\beta',\beta''$ having at most
polynomial growth,
\begin{align}
\sup_{\eps >0} E\left[\Big|\eps \int_{t=0}^T \int_{\R_x^d} \beta''(u_\eps(t,x))|\grad_x
u_{\eps}(t,x)|^2\,dx\,dt\Big|^p\right] < \infty, \quad p=1,2...,\,T>0.\notag
\end{align}
\end{itemize}
\end{prop}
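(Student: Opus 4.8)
The plan is to realise $u_\eps$ as the unique mild solution of \eqref{eq:levy_stochconservation_laws-viscous}, so that assertion $(a)$ becomes the defining fixed-point identity, and then to extract the moment bounds $(b)$ and the dissipation bound $(c)$ from a single It\^{o}--L\'{e}vy energy computation. For existence and uniqueness I would work in the Banach space of $\{\mathcal{F}_t\}$-predictable processes with norm $\sup_{0\le t\le T}\big(E[\,\|u(t,\cdot)\|_p^p\,]\big)^{1/p}$ and consider the map $u\mapsto\mathcal{T}(u)$ whose value $\mathcal{T}(u)(t,x)$ is exactly the right-hand side of the integral equation in $(a)$, with $F_\eps,\eta_\eps$ in place of the data. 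The divergence term is handled by transferring the derivative onto the heat kernel and using $\|\grad_x G_\eps(t-s,\cdot)\|_{L^1(\rd)}\le C\,\eps^{-1/2}(t-s)^{-1/2}$, which is time-integrable, together with the polynomial-growth bounds on $F_\eps$ from \ref{A2}. The stochastic convolution is controlled by the It\^{o}--L\'{e}vy isometry and the Burkholder--Davis--Gundy inequality: assumption \ref{A3} gives the Lipschitz bound $|\eta_\eps(u;z)-\eta_\eps(v;z)|\le\lambda^*|u-v|(|z|\wedge 1)$, while \ref{A4'} makes $\int_{|z|>0}(|z|\wedge 1)^2\,\nu(dz)$ finite, so $\mathcal{T}$ is a contraction on a short time interval. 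A standard contraction/continuation argument then yields a unique global mild solution, and the parabolic smoothing of $G_\eps$ bootstraps it to a $C^2(\rd)$-valued predictable process; here I would simply invoke the \emph{mutatis mutandis} argument of \cite{kbm2014}.

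For part $(b)$ I would apply the It\^{o}--L\'{e}vy formula to the functional $u\mapsto\int_{\rd}|u|^p\,dx$ along $u_\eps$. Three features make the computation close. First, the flux contribution $-\int_{\rd}p\,|u_\eps|^{p-2}u_\eps\,\grad\cdot F_\eps(u_\eps)\,dx$ is the integral of a pure divergence (an entropy flux) and hence vanishes. Second, the viscous contribution equals $-\eps\,p(p-1)\int_{\rd}|u_\eps|^{p-2}|\grad_x u_\eps|^2\,dx\le 0$ and is simply discarded, which is precisely what keeps every constant independent of $\eps$. Third, the compensator term is estimated by a second-order Taylor expansion, $\big|\,|u+\eta_\eps|^p-|u|^p-p|u|^{p-2}u\,\eta_\eps\,\big|\le C\,|\eta_\eps|^2\big(|u|^{p-2}+|\eta_\eps|^{p-2}\big)$, after which \ref{A3} and \ref{A4'} bound the $\nu$-integral by a constant multiple of $1+\|u_\eps(t,\cdot)\|_p^p$, the lower-order $L^q$-moments ($q<p$) being absorbed by Young's inequality and treated inductively from the $L^1,L^2$ bounds built into \ref{A1}. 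The compensated $\tilde{N}$-integral is a zero-mean martingale once one localises by stopping times; taking expectations then gives $\tfrac{d}{dt}E[\|u_\eps(t)\|_p^p]\le C\big(1+E[\|u_\eps(t)\|_p^p]\big)$ with $C=C(p,T)$ independent of $\eps$, and Gronwall's lemma closes $(b)$.

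For part $(c)$ I would reuse the same identity, now for a general $\beta\in C^2(\R)$ of polynomial growth, but rearranged so that the dissipation term is isolated: up to the vanishing flux term,
\begin{align*}
\eps\int_{0}^{T}\!\!\int_{\rd}\beta''(u_\eps)|\grad_x u_\eps|^2\,dx\,dt
&= \int_{\rd}\beta(u_\eps(0,x))\,dx-\int_{\rd}\beta(u_\eps(T,x))\,dx \\
&\quad + (\text{compensator term}) + (\text{$\tilde{N}$-martingale term}).
\end{align*}
Taking absolute values, raising to the $p$-th power and then expectations, the two $\beta$-boundary terms and the compensator term are controlled by the uniform moments of $(b)$ (using the polynomial growth of $\beta,\beta',\beta''$ and of $\eta_\eps$), while the martingale term is handled by the Burkholder--Davis--Gundy inequality and, again, \ref{A3}--\ref{A4'}. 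Since none of the resulting constants depend on $\eps$, this yields $(c)$.

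The step I expect to be genuinely delicate is the existence/uniqueness in $(a)$: one must simultaneously absorb the $(t-s)^{-1/2}$ singularity of $\grad_x G_\eps$ produced by the nonlinear flux and control the $L^p$-moments of the stochastic convolution, and one must justify \emph{a priori}, before the bounds of $(b)$ are available, that the compensated integrals are true martingales, which forces a careful stopping-time/localisation scheme. The remaining work is bookkeeping: ensuring that the Gronwall constants in $(b)$ and the Burkholder--Davis--Gundy constants in $(c)$ remain uniform in $\eps$, which is automatic because the only $\eps$-dependence enters through the nonnegative viscous term that is discarded.
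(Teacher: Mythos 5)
The paper does not actually prove this proposition: it is imported verbatim from \cite{kbm2014}, with only the remark that the classical deterministic techniques carry over \emph{mutatis mutandis}, so there is no in-paper argument to compare yours against line by line. Your outline --- mild formulation via a fixed point for the Duhamel identity in $(a)$, an It\^{o}--L\'{e}vy energy computation for $\int|u|^p\,dx$ with the flux term vanishing as a divergence and the viscous term discarded by sign for $(b)$, and the same identity rearranged to isolate the dissipation term for $(c)$ --- is exactly the standard route and is consistent with what the cited reference does; in particular your observation that uniformity in $\eps$ comes for free because the only $\eps$-dependent contribution is the nonpositive viscous term is the right mechanism behind \eqref{uniform-estimates}.

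Two points deserve more care than your sketch gives them. First, in $(a)$ the map $u\mapsto\mathcal{T}(u)$ is \emph{not} a contraction on the full space you describe: under \ref{A2} the flux $F_\eps$ is only locally Lipschitz (its derivative has polynomial growth), so $u\mapsto F_\eps(u)$ is not Lipschitz from $L^p$ to $L^p$. One must first truncate the flux (or work on a ball and localize), solve the truncated problem by contraction, and then remove the truncation using the a priori moment bounds of $(b)$; this is more than bookkeeping and is where the circularity you worry about (needing $(b)$ before $(a)$ is complete) is actually resolved. Second, in $(b)$ the functional $\int|u|^p\,dx$ is not $C^2$ for $p=1$, so the It\^{o}--L\'{e}vy formula must be applied to the smooth convex approximation $\beta_\xi$ of $|\cdot|$ (the same one the paper introduces at the end of Section~\ref{tech-frame-main-result}) and the limit $\xi\downarrow 0$ taken afterwards; the case $p=1$ is needed since the proposition asserts the bound for all positive integers $p$. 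Neither issue invalidates your strategy, but both require explicit treatment before the argument is complete.
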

\begin{rem}
 \label{rem:integrability}
  In view of Proposition \ref{prop:vanishing viscosity-solution} and assumption \ref{A1}, it follows that, for each fixed
  $\eps>0$, $\grad u_\eps(t,x)$  is integrable. Moreover if $E\Big[\int_{\R_x^d} |\grad^2 u_{\eps}(0,x)|\,dx\Big] < + \infty$, then
  $\grad^2 u_{\eps}(t,x)$ is also integrable for fixed $\eps>0$ and any finite time $T>0$ (cf. \cite[Section $3$]{kbm2014}).
\end{rem}

Now we are in a position to state the main results of this article.
\begin{maintheorem*} [continuous dependence estimate]
\label{thm:continuous dependence estimate} 
  Let the assumptions \ref{A1}, ~\ref{A2}, ~\ref{A3}, and \ref{A4'} hold for two sets of 
  given data $(u_0,F,\eta)$ and $(v_0,G,\sigma)$. Let $u(t,x)$ be any entropy solution 
  of \eqref{eq:levy_stochconservation_laws} with initial data $u_0(x)$ and $v(s,y )$ be 
  another entropy solution with initial data $v_0(y)$ and satisfies
  \begin{align} 
dv(s,y) + \mbox{div}_y G(v(s,y))\,ds &= \int_{|z|> 0} \sigma (v(s,y);z) \, \tilde{N}(dz,ds). \label{eq:stability-12}
\end{align} In addition, we assume that 
$F^{\prime\prime},\, F^\prime-G^\prime\in L^\infty $and define $\mathcal{D}(\eta, \sigma) :=\displaystyle{ \sup_{u\in \R}}\int_{|z|> 0} \frac{\big(\eta(u;z) -\sigma(u;z)\big)^2}{1+|u|^2} \nu(dz)$. Then there exists a constant $C_T>0$, independent of $|u_0|_{BV(\R^d)}$ and
$|v_0|_{BV(\R^d)}$, such that for a.e.~~ $t\ge 0$, 
\begin{align}
&E \Big[\int_{\R_x^d}   \big|u(t,x)  -v(t,x)\big|\phi(x) \,dx\Big] \notag \\
 & \le  C_T \,\Bigg[
 \big( 1+ E[|v_0|_{BV(\R^d)}]\big)\sqrt{t \mathcal{D}(\eta, \sigma)}||\phi(\cdot)||_{L^{\infty}(\R^d)} 
+ E \big[|v_0|_{BV(\R^d)}\big]\, ||F^\prime-G^\prime||_{\infty}\, t\, ||\phi(\cdot)||_{L^{\infty}(\R^d)}\notag \\
 &\qquad\qquad +   E \Big[\int_{\R_x^d}| u_0(x) -v_0(x)| \phi(x)\,dx  \Big]
+\sqrt{t \mathcal{D}(\eta, \sigma)} ||\phi(\cdot)||_{L^1(\R^d)}\Bigg],
 \end{align}  where $0\le \phi \in C_c^2(\R^d)$ such that $|\grad \phi(x)| \le C \phi(x)$ and $|\Delta \phi(x)| \le C \phi(x)$
 for some constant $C>0$.
 Moreover,  a special choice of $\phi(x)$ with the above properties
 \begin{align*}
 \phi(x) =\begin{cases} 
 1,\quad &\text{when} ~ \abs{x} \le R,\\
 e^{-C \big(\abs{x} -R \big)}, \quad &\text{when}~ \abs{x} \ge R,
                 \end{cases}
 \end{align*}
leads to the following simplified result: For any $R >0$, there exists a constant $C_T^R>0$, independent of $|u_0|_{BV(\R^d)}$ and
$|v_0|_{BV(\R^d)}$, such that for a.e.~~ $t\ge 0$, 
\begin{align}
&E \Big[\int_{\abs{x} \le R}   \big|u(t,x)  -v(t,x)\big|\,dx\Big] \notag \\
 \le & C_T^R\,\Bigg[
 \big( 1+ E[|v_0|_{BV(\R^d)}]\big) \sqrt{t \mathcal{D}(\eta, \sigma)}+ t\, E \big[|v_0|_{BV(\R^d)}\big]\, ||F^\prime-G^\prime||_{\infty} +   E \Big[\int_{\R_x^d}| u_0(x) -v_0(x)| \,dx \Big]
\Bigg].
 \end{align}
\end{maintheorem*}
\begin{rem}
 The condition that $F^{\prime\prime},\, F^\prime-G^\prime \in L^\infty $ could be avoided if we assume that $u,v \in L^\infty((0,T)\times \R^d \times \Omega)$ for any time $T>0$. In this case, an appropriate version of the main theorem would be possible. Moreover, the quantity $\mathcal{D}(\eta, \sigma)$ is well defined in view of \ref{A3} and \ref{A4'}.
\end{rem}

As a by product of the above theorem, we have the following corollary:
\begin{maincorollary*}[error estimate]
\label{cor: Error estimate}
Let the assumptions \ref{A1}, ~\ref{A2}, ~\ref{A3}, ~\ref{A4'} hold and let $u(t,x)$ be any entropy 
  solution of \eqref{eq:levy_stochconservation_laws} with  $E \big[|u(t,\cdot)|_{BV(\R^d)}\big]\le E\big[|u_0|_{BV(\R^d)}\big]$, for
  $t>0$. In addition, we assume that 
  $F^{\prime\prime} \in L^\infty $.
Then, there exists a constant $C_T>0$, independent of $|u_0|_{BV(\R^d)}$, such that for a.e. $t\ge 0$
\begin{align}
 E \Big[\int_{\R_x^d} \big|u_\eps(t,x) & -u(t,x)\big|\,dx\Big] \notag \\
  & \qquad \le  C_T \Big \{ \eps^{\frac{1}{2}} \big( 1 + E[|u_0|_{BV(\R^d)}] \big)(1+ t)
     + E \big[\int_{\R_x^d} \big|u_\eps(0,x)-u_0(x)\big|\,dx\big] \Big\}.\notag
 \end{align}
  \end{maincorollary*}
\noindent Moreover, if we assume that the initial error $E \big[\int_{\R_x^d} \big|u_\eps(0,x)-u_0(x)\big|\,dx\big] = \mathcal{O}(\eps^{\frac12})$, then we get
\begin{align*}
E \Bigg[\int_{\R_x^d} \big|u_\eps(t,x) & -u(t,x)\big|\,dx\Bigg] = \mathcal{O}(\eps^{\frac12}).
\end{align*}
Here we used the notation $\mathcal{O}(\eps)$ to denote quantities that depend on $\eps$ and are bounded above by $C \eps$, where $C$ is a constant independent of $\eps$.
\begin{rem}
We mention that, just like the deterministic case \cite{godu}, we are able to show that the rate of convergence for vanishing viscosity solution is $\frac12$. It is also worth mentioning that this rate is optimal.
\end{rem}
 
We finish this section by introducing a
  special class of entropy functions which  will play a crucial role in the analysis.  Let $\beta:\R \rightarrow \R$ be a $C^\infty$ function satisfying 
 \begin{align*}
      \beta(0) = 0,\quad \beta(-r)= \beta(r),\quad \beta^\prime(-r) = -\beta^\prime(r),\quad \beta^{\prime\prime} \ge 0,
 \end{align*} and 
\begin{align*}
\beta^\prime(r)=\begin{cases} -1,\quad &\text{when} ~ r\le -1,\\
                               \in [-1,1], \quad &\text{when}~ |r|<1,\\
                               +1, \quad &\text{when} ~ r\ge 1.
                 \end{cases}
\end{align*} For any $\xi > 0$, define  $\beta_\xi:\R \rightarrow \R$ by 
\begin{align*}
         \beta_\xi(r) = \xi\, \beta \left(\frac{r}{\xi} \right).
\end{align*} 
Then
\begin{align}\label{eq:approx to abosx}
 |r|-M_1\xi \le \beta_\xi(r) \le |r|\quad \text{and} \quad |\beta_\xi^{\prime\prime}(r)| \le \frac{M_2}{\xi} \mathds{1}_{ \lbrace |r|\le \xi \rbrace},
\end{align} 
where $\mathds{1}_{A}$ denotes the characteristic function of the set $A$, and
\begin{align*}
 M_1 = \sup_{|r|\le 1}\big | |r|-\beta(r)\big |, \quad M_2 = \sup_{|r|\le 1}|\beta^{\prime\prime} (r)|.
\end{align*}
Finally, by simply dropping $\xi$, for $\beta= \beta_\xi$ we define 
\begin{align*}
&F_k^\beta(a,b)=\int_{b}^a \beta^\prime(\sigma-b)F_k^\prime(\sigma)\,d(\sigma), \quad
F^\beta(a,b)=(F_1^\beta(a,b),F_2^\beta(a,b),...,F_d^\beta(a,b)),\\
&F_k(a,b)= \text{sign}(a-b)(F_k(a)-F_k(b)) , \quad
F(a,b)= (F_1(a,b),F_2(a,b),....,F_d(a,b)).
\end{align*} 


\section{A priori estimates}
\label{sec:apriori}
 In this section, we derive uniform spatial BV bound for the stochastic balance laws driven by 
 L\'{e}vy process given by  \eqref{eq:levy_stochconservation_laws} under the assumptions  \ref{A1}, \ref{A2}, \ref{A3}, and ~\ref{A4'}.

\begin{thm}[spatial bounded variation]
 \label{thm: spatial BV estimate}
   Let the assumptions ~\ref{A1}, ~\ref{A2}, ~\ref{A3}, and ~\ref{A4'} hold. Furthermore, let $u_\eps(t,x)$ be a
solution to the initial value problem \eqref{eq:levy_stochconservation_laws-viscous}. Then, for any time $t>0$
\begin{align*}
  E \Big[\int_{\R_x^d} \big| \grad u_\eps(t,x)\big|\,dx\Big]\le E \Big[\int_{\R_x^d} \big| \grad u_\eps(0,x)\big|\,dx\Big]\le 
  E \Big[\int_{\R_x^d} \big| \grad u_0(x)\big|\,dx\Big].
\end{align*}
\end{thm}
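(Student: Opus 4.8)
The plan is to obtain the estimate from an It\^o--L\'evy computation after differentiating the viscous equation in space. Fix a direction $l$ and set $v_l=\partial_{x_l}u_\eps$. Differentiating \eqref{eq:levy_stochconservation_laws-viscous} in $x_l$ and using that $\eta_\eps$ depends on $u_\eps$ alone gives
\begin{align*}
dv_l+\Div\big(F_\eps'(u_\eps)\,v_l\big)\,dt=\int_{|z|>0}\eta_\eps'(u_\eps;z)\,v_l\,\tilde{N}(dz,dt)+\eps\Delta v_l\,dt,
\end{align*}
so that at a jump the field $v_l$ is multiplied by the scalar factor $1+\eta_\eps'(u_\eps;z)$, i.e. $v_l\mapsto\big(1+\eta_\eps'(u_\eps;z)\big)v_l$. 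I would then apply the It\^o--L\'evy formula to $\beta_\xi(v_l)$, with $\beta_\xi$ the regularized modulus introduced above, integrate over $\R^d$, take expectations, let $\xi\to0$, and finally combine the directions; applying the same computation to a smooth convex approximation of the full Euclidean modulus $|\grad u_\eps|$ (for which the jump is again the scalar rescaling by $1+\eta_\eps'$) reproduces the statement with the Euclidean norm, the Hessian of that approximation playing the role of $\beta_\xi''\ge0$.

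After taking expectations the $\tilde{N}$-integral drops out (it is a square-integrable martingale, since $|\beta_\xi((1+\eta_\eps')v_l)-\beta_\xi(v_l)|\le|\eta_\eps'|\,|v_l|\le\lambda^*(|z|\wedge1)|v_l|$ and $\int_{|z|>0}(|z|\wedge1)^2\nu(dz)<\infty$ by \ref{A4'}), and three terms remain. The viscosity term is dissipative: integrating by parts twice,
\begin{align*}
\eps\,E\!\int_0^t\!\!\int_{\R^d}\beta_\xi'(v_l)\Delta v_l\,dx\,ds=-\eps\,E\!\int_0^t\!\!\int_{\R^d}\beta_\xi''(v_l)\,|\grad v_l|^2\,dx\,ds\le0.
\end{align*}
The convection term, after integrating by parts in each $x_k$ and using $r\beta_\xi''(r)=\frac{d}{dr}\big(r\beta_\xi'(r)-\beta_\xi(r)\big)$, reduces to $-E\int_0^t\int\sum_k F_{\eps,k}''(u_\eps)\,v_k\,\big(v_l\beta_\xi'(v_l)-\beta_\xi(v_l)\big)\,dx\,ds$; since $|r\beta_\xi'(r)-\beta_\xi(r)|\le C\xi$ uniformly in $r$, this is of order $\xi$, the prefactor $E\int_0^t\int|F_{\eps,k}''(u_\eps)||v_k|\,dx\,ds$ being finite for fixed $\eps$ by the polynomial growth in \ref{A2}, the $L^p$ bounds \eqref{uniform-estimates}, and the $L^2$-regularity of $\grad u_\eps$ (Remark~\ref{rem:integrability}).

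The compensator term is the crux:
\begin{align*}
E\!\int_0^t\!\!\int_{\R^d}\!\!\int_{|z|>0}\Big(\beta_\xi\big((1+\eta_\eps')v_l\big)-\beta_\xi(v_l)-\eta_\eps'\,v_l\,\beta_\xi'(v_l)\Big)\nu(dz)\,dx\,ds.
\end{align*}
By convexity of $\beta_\xi$ the integrand is nonnegative, so it cannot simply be discarded; the point is that it vanishes as $\xi\to0$. Indeed, since \ref{A3} forces $|\eta_\eps'(u_\eps;z)|\le\lambda^*<1$, the jump factor satisfies $1+\eta_\eps'\ge1-\lambda^*>0$, whence the pointwise limit is $|v_l|\,|1+\eta_\eps'|-|v_l|-\eta_\eps'\,v_l\,\sgn(v_l)=|v_l|(1+\eta_\eps')-|v_l|-\eta_\eps'|v_l|=0$. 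A second-order Taylor bound together with $\beta_\xi''(r)\le\frac{M_2}{\xi}\mathds{1}_{\{|r|\le\xi\}}$ and $1+\eta_\eps'\ge1-\lambda^*$ dominates the integrand by $C\,|v_l|\,(\eta_\eps')^2\le C\lambda^{*2}|v_l|(|z|\wedge1)^2$, which is integrable over $(s,x,z)$ once $\int_0^tE\|\grad u_\eps(s)\|_{L^1}\,ds<\infty$; dominated convergence then sends this term to $0$.

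Letting $\xi\to0$ and using $|v_l|-M_1\xi\le\beta_\xi(v_l)\le|v_l|$ gives $E\int_{\R^d}|\partial_{x_l}u_\eps(t)|\,dx\le E\int_{\R^d}|\partial_{x_l}u_\eps(0)|\,dx$; the vectorial form of the same computation yields the first inequality for the Euclidean modulus $|\grad u_\eps|$, while the second inequality is exactly the assumed property of the initial approximation $u_\eps(0,\cdot)$. The \emph{main obstacle} is the compensator term: establishing its vanishing rigorously, which hinges on the strict bound $\lambda^*<1$ (so that $1+\eta_\eps'>0$) and on first securing the qualitative integrability $\int_0^tE\|\grad u_\eps(s)\|_{L^1}\,ds<\infty$ needed to justify both the It\^o--L\'evy formula on $\R^d$ and the dominated-convergence passage; in practice one may first obtain a crude bound with a Gronwall factor to guarantee finiteness and then bootstrap to the stated constant $1$.
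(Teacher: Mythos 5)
Your argument is correct and follows essentially the same route as the paper: differentiate the viscous equation in $x_i$, apply the It\^o--L\'evy formula to $\beta_\xi(\partial_{x_i}u_\eps)$, discard the martingale and the dissipative viscosity term, and control the compensator through the inequality $0\le |\partial_{x_i}u_\eps|\le(1-\lambda^*)^{-1}\big|\partial_{x_i}u_\eps+\theta\,\eta_\eps'\,\partial_{x_i}u_\eps\big|$, which is exactly where the paper also invokes $\lambda^*<1$ before letting $\xi\to0$. The only (harmless) deviations are that you integrate directly over $\R^d$ instead of inserting and then removing the spatial cut-off $\psi_N$, you absorb the two flux contributions into a single $O(\xi)$ term via $r\beta_\xi''(r)=\tfrac{d}{dr}\big(r\beta_\xi'(r)-\beta_\xi(r)\big)$ where the paper estimates $\mathcal{E}_2$ and $\mathcal{E}_3$ separately, and you pass to the limit in the compensator by dominated convergence with a $\xi$-uniform majorant rather than by the paper's direct $C\xi(1\wedge|z|^2)$ bound.
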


\begin{proof}
 Since $u_\eps(t,x)$ is a smooth solution of  the
initial value problem \eqref{eq:levy_stochconservation_laws-viscous}, by differentiating 
\eqref{eq:levy_stochconservation_laws-viscous} with respect to $x_i$, we find that $\partial_{x_i}u_\eps(t,x), 1\le i\le d$ satisfies
the stochastic partial differential equation given by
 \begin{align*}
 & d \big(\partial_{x_i}u_\eps(t,x)\big) + \mbox{div}_x \big( F^\prime_\eps(u_\eps(t,x))\partial_{x_i}u_\eps(t,x)\big) \,dt =  \int_{|z|> 0} \eta^\prime_\eps(u_\eps(t,x);z)\partial_{x_i}u_\eps(t,x)\tilde{N}(dz,\,dt) \notag \\
  & \hspace{10cm} + \epsilon \Delta_{xx} (\partial_{x_i}u_\eps(t,x)) \,dt.
\end{align*}
To proceed further, we apply It\^{o}-L\'{e}vy formula to $\beta_{\xi}(\partial_{x_i}u_\eps(t,x))$ to obtain
\begin{align}
 & d \big( \beta_{\xi}(\partial_{x_i}u_\eps(t,x))\big) + \mbox{div}_x \big( F^\prime_\eps(u_\eps(t,x))\partial_{x_i}u_\eps(t,x)\big)\,\beta_{\xi}^\prime(\partial_{x_i}u_\eps(t,x))
 \,dt  \notag \\
 &=  \int_{|z|> 0} \int_{\theta=0}^1 \eta^\prime_\eps(u_\eps(t,x);z)\partial_{x_i}u_\eps(t,x) \beta_{\xi}^\prime \big(\partial_{x_i}u_\eps(t,x)+\theta\,
 \eta^\prime_\eps(u_\eps(t,x);z)\partial_{x_i}u_\eps(t,x) \big) \,d\theta \,\tilde{N}(dz,\,dt) \notag \\
 & \quad + \int_{|z|> 0} \int_{\theta=0}^1 (1-\theta)\big(\eta^\prime_\eps(u_\eps;z)\partial_{x_i}u_\eps\big)^2 \beta_{\xi}^{\prime\prime} 
 \Big(\partial_{x_i}u_\eps(t,x)+\theta\,
 \eta^\prime_\eps(u_\eps(t,x);z)\partial_{x_i}u_\eps(t,x) \Big) \,d\theta\, \nu(dz)\,dt \notag \\
  & \hspace{7cm} + \epsilon \Delta_{xx} \big(\partial_{x_i}u_\eps(t,x)\big)\beta_{\xi}^\prime(\partial_{x_i}u_\eps(t,x)) \,dt.\label{eq:levy_stochconservation_laws-viscous-derivative-1}
\end{align}
Since $\beta_\xi$ is convex, we conclude that
\begin{align*}
\eps \Delta_{xx} \big(\partial_{x_i}u_\eps(t,x)\big)\beta_{\xi}^\prime\big(\partial_{x_i}u_\eps(t,x)\big)
 &= \eps \Big( \Delta \beta_{\xi}(\partial_{x_i}u_\eps(t,x))-\beta_{\xi}^{\prime\prime}(\partial_{x_i}u_\eps(t,x))|\grad \partial_{x_i}u_\eps(t,x)|^2\Big)\\
 &\, \le \eps  \Delta \beta_{\xi}\big(\partial_{x_i}u_\eps(t,x)\big),
\end{align*}
and for the martingale term, we have
\begin{align*}
E \Bigg[ \int_0^t  \int_{|z|> 0} \int_{\theta=0}^1 \eta^\prime_\eps(u_\eps(s,x);z)\partial_{x_i}u_\eps(s,x) \beta_{\xi}^\prime \big(\partial_{x_i}u_\eps(s,x)+\theta\,
 \eta^\prime_\eps(u_\eps(s,x);z)\partial_{x_i}u_\eps(s,x) \big) \,d\theta \,\tilde{N}(dz,\,ds) \Bigg] =0.
\end{align*}
 By Remark~\ref{rem:integrability}, we see that for each fixed $\eps>0$ and $\,1\le i\le d$, $\grad \partial_{x_i}u_\eps(t,x)$ is integrable.
 Let $0 \le \psi(x) \in C_c^\infty(\R^d)$. Multiply \eqref{eq:levy_stochconservation_laws-viscous-derivative-1} by 
 $\psi$ and  then integrate respect to $x$  to have
 \begin{align}
  & E \Big[\int_{\R_x^d}\beta_{\xi} \big(\partial_{x_i}u_\eps(t,x)\big)\psi(x)\,dx\Big]- 
  E \Big[\int_{\R_x^d}\beta_{\xi}\big(\partial_{x_i}u_\eps(0,x)\big) \psi(x)\,dx\Big]\notag \\
 & \qquad \le    E \Big[\int_{\R_x^d}\int_{s=0}^t \int_{|z|> 0} \int_{\theta=0}^1 (1-\theta)\beta_{\xi}^{\prime\prime} 
 \Big(\partial_{x_i}u_\eps(s,x)+\theta\,
 \eta^\prime_\eps(u_\eps(s,x);z)\partial_{x_i}u_\eps(s,x) \Big) \notag \\
 & \hspace{6cm}\times  \big(\eta^\prime_\eps(u_\eps(s,x);z)\partial_{x_i}u_\eps(s,x)\big)^2
 \psi(x)\,d\theta\, \nu(dz)\,ds\,dx \Big] \notag \\
 &  \hspace{2cm}- E \Big[\int_{\R_x^d}\int_{s=0}^t \mbox{div}_x \big( F^\prime_\eps(u_\eps(s,x))
 \partial_{x_i}u_\eps(s,x)\big)\beta_{\xi}^\prime\big(\partial_{x_i}u_\eps(s,x)\big)\psi(x)
 \,ds\,dx\Big]    \notag \\
  & \hspace{4cm}+ \eps E \Big[ \int_{\R_x^d}\int_{s=0}^t \beta_\xi \big(\partial_{x_i}u_\eps(s,x)\big) \Delta \psi(x)\,ds\,dx\Big].\label{eq:deriv}
 \end{align}
To proceed further, observe that
  \begin{align*}
   &\mbox{div}_x \big( F^\prime_\eps(u_\eps(s,x))\partial_{x_i}u_\eps(s,x)\big)\beta_{\xi}^\prime\big(\partial_{x_i}u_\eps(s,x)\big)\psi(x) 
   = \mbox{div}_x\Big[ F^\prime_\eps(u_\eps(s,x))\partial_{x_i}u_\eps(s,x)\beta_{\xi}^\prime(\partial_{x_i}u_\eps(s,x))\psi(x)\Big]\\
   & \qquad -  \partial_{x_i}u_\eps(s,x)\, F^\prime_\eps(u_\eps(s,x)) \Big( \beta_{\xi}^{\prime\prime}(\partial_{x_i}u_\eps(s,x))\,\psi(x)\,\grad \partial_{x_i}u_\eps(s,x) + \beta_{\xi}^{\prime}(\partial_{x_i}u_\eps(s,x))\,\grad \psi(x) \Big). 
  \end{align*} 
  Therefore, we obtain from \eqref{eq:deriv}
  \begin{align}
  & E \Big[\int_{\R_x^d}\beta_{\xi} \big(\partial_{x_i}u_\eps(t,x)\big)\psi(x)\,dx\Big] \le 
  E \Big[\int_{\R_x^d}\beta_{\xi} \big(\partial_{x_i}u_\eps(0,x)\big)  \psi(x)\,dx\Big]\notag \\
  & +  E \Big[\int_{\R_x^d}\int_{s=0}^t \int_{|z|> 0} \int_{\theta=0}^1  (1-\theta)\beta_{\xi}^{\prime\prime} 
 \Big(\partial_{x_i}u_\eps(s,x)+\theta\,
 \eta^\prime_\eps(u_\eps(s,x);z)\partial_{x_i}u_\eps(s,x) \Big) \notag \\
 & \hspace{7cm}\times \big(\eta^\prime_\eps(u_\eps(s,x);z)\partial_{x_i}u_\eps(s,x)\big)^2 \psi(x)
 \,d\theta\, \nu(dz)\,ds\,dx \Big] \notag \\
 & \qquad \quad+ E \Big[\int_{\R_x^d}\int_{s=0}^t \partial_{x_i}u_\eps(s,x) \psi(x)\beta_{\xi}^{\prime\prime}\big(\partial_{x_i}u_\eps(s,x)\big)\grad \partial_{x_i}u_\eps(s,x)
 \cdot F^\prime_\eps(u_\eps(s,x)) \,ds\,dx\Big] \notag\\
 & \qquad \qquad+ E \Big[\int_{\R_x^d}\int_{s=0}^t \partial_{x_i}u_\eps(s,x)\beta_{\xi}^{\prime}\big(\partial_{x_i}u_\eps(s,x)\big)\grad \psi(x)
 \cdot F^\prime_\eps(u_\eps(s,x)) \,ds\,dx\Big]  \notag\\
 & \qquad \quad \qquad+ \eps E \Big[\int_{\R_x^d}\int_{s=0}^t \beta_\xi \big(\partial_{x_i}u_\eps(s,x)\big)\Delta \psi(x)\,ds\,dx\Big] \notag \\
 & := E \Big[\int_{\R_x^d}\beta_{\xi}\big(\partial_{x_i}u_\eps(0,x)\big)\psi(x)\,dx\Big] + \mathcal{E}_1(\eps,\xi) +  \mathcal{E}_2(\eps,\xi)  +  \mathcal{E}_3(\eps,\xi)+  \mathcal{E}_4(\eps, \xi).\label{eq:levy_stoc-derivative-2}
  \end{align}
To estimate $\mathcal{E}_1(\eps,\xi)$, we proceed as follows. Note that we can rewrite $\mathcal{E}_1(\eps,\xi)$ as
\begin{align*}
\mathcal{E}_1(\eps,\xi)= E\Big[\int_{\R_x^d}\int_{s=0}^t \int_{|z|> 0} \int_{\theta=0}^1 (1-\theta)\,h^2 \beta_{\xi}^{\prime\prime} 
 \big(a+\theta\,h \big)\psi(x)\,d\theta\, \nu(dz)\,ds\,dx \Big],
   \end{align*} 
   where $a=\partial_{x_i}u_\eps(s,x)$ and $h= \eta^\prime_\eps(u_\eps(s,x);z)\partial_{x_i}u_\eps(s,x)$.
In view of the assumption ~\ref{A3}, it is easy to see that 
   \begin{align}
    h^2  \beta_{\xi}^{\prime\prime} (a+\theta\,h ) &
    \le \big|\partial_{x_i}u_\eps(s,x)\big|^2(1\wedge |z|^2)  \beta_{\xi}^{\prime\prime} (a+\theta\,h ).\label{estimate-h^2}
   \end{align}
Next we move on to find a suitable upper bound on $a^2  \beta_{\xi}^{\prime\prime} \big(a+\theta\,h \big)$. 
Since $\beta^{\prime\prime}$ is an even function, without loss of generality
   we may assume that $a>0$. Then by our assumption \ref{A3}
   \begin{align*}
    \partial_{x_i}u_\eps(t,x) + \theta \eta^\prime_\eps \big(u_\eps(t,x);z\big)\partial_{x_i}u_\eps(t,x) \ge (1-\lambda^*)
    \partial_{x_i}u_\eps(t,x),
   \end{align*} for $\theta \in [0,1]$. In other words
   \begin{align}
     0\le a \le (1-\lambda^*)^{-1} (a+ \theta\,h).\label{estimate-a}
   \end{align}
Combining \eqref{estimate-h^2} and \eqref{estimate-a} yields
   \begin{align*}
     h^2  \beta_{\xi}^{\prime\prime} (a+\theta\,h )   \le  (1\wedge |z|^2) (1-\lambda^*)^{-2} (a+\theta \,h)^2
     \beta_{\xi}^{\prime\prime} (a+\theta\,h )
      \le C (1\wedge |z|^2) \,\xi.
   \end{align*}
Since by assumption ~\ref{A4'}, $\int_{|z|>0}(1\wedge |z|^2) \,\nu(dz) < + \infty $, we infer that 
\begin{align}
\label{eq:1}
\abs{\mathcal{E}_1(\eps,\xi)} \le C\, t\,\xi\,\norm{\psi}_{L^1(\R^d)}\,\, \text{and hence}\,\, \mathcal{E}_1(\eps,\xi) \mapsto 0, \,\text{as}\,\, \xi \downarrow 0.
\end{align}
Next, we move on to estimate $ \mathcal{E}_2(\eps,\xi)$. In fact, we have
\begin{align*}
\abs{\mathcal{E}_2(\eps,\xi)} \le E \Big[ \int_{\R_x^d}\int_{s=0}^t |\partial_{x_i}u_\eps(s,x)|\psi(x)\beta_{\xi}^{\prime\prime} \big(\partial_{x_i}u_\eps(s,x)\big) 
    \big|\grad \partial_{x_i}u_\eps(s,x)\big||F^\prime_\eps(u_\eps(s,x))| \,ds\,dx \Big]
   \end{align*}
First observe that, in view of \eqref{eq:approx to abosx}, we obtain
  \begin{align*}
   |\partial_{x_i}u_\eps(s,x)|\beta_{\xi}^{\prime\prime} \big(\partial_{x_i}u_\eps(s,x)\big) &\le   |\partial_{x_i}u_\eps(s,x)| \frac{M_2}{\xi}
   \chi_{[-\xi, \xi]}(\partial_{x_i}u_\eps(s,x)) \mapsto 0, \,\, \text{almost surely as}\,\, \xi \downarrow 0,
   \end{align*}
  and moreover we see that
  \begin{align*}
   |\partial_{x_i}u_\eps(s,x)|\beta_{\xi}^{\prime\prime} \big(\partial_{x_i}u_\eps(s,x)\big)&\psi(x) \big|\grad \partial_{x_i}u_\eps(s,x)\big|\,|F^\prime_\eps(u_\eps(s,x))| \\
  &\le C ||\psi(\cdot)||_{L^\infty}\Big( |\grad \partial_{x_i}u_\eps(s,x)|^2 + |(u_\eps(s,x))|^{2p_0}\Big),\,\, \text{for some}~\,p_0 \in \mathbb{N}.
  \end{align*} 
In view of Remark \ref{rem:integrability} and Proposition \ref{prop:vanishing viscosity-solution}, the right-hand side
is integrable and independent of $\xi >0$. Therefore, one can apply dominated convergence theorem to conclude that
\begin{align}
\label{eq:2}
\mathcal{E}_2(\eps,\xi) \mapsto 0, \,\text{as}\,\, \xi \downarrow 0.
\end{align}
Next, we consider the term $\mathcal{E}_3(\eps,\xi)$. With the help of uniform estimates \eqref{uniform-estimates}, we conclude
\begin{align}
   |\mathcal{E}_3(\eps,\xi)| &\le E \Big[\int_{\R_x^d}\int_{s=0}^t  |\partial_{x_i}u_\eps(s,x)|\,|\grad \psi(x)| 
 | F^\prime_\eps(u_\eps(s,x))| \,ds\,dx\Big]\notag \\
 & \qquad \le ||\grad \psi(\cdot)||_{L^\infty(\R^d)} E \Big[\int_{\R_x^d}\int_{s=0}^t  \big|\partial_{x_i}u_\eps(s,x)\big|
 \big|u_\eps(s,x)\big|^p \,ds\,dx\Big] \notag \\
 & \qquad \qquad \le ||\grad \psi(\cdot)||_{L^\infty(\R^d)} E\Big[ \int_{\R_x^d}\int_{s=0}^t\Big(|\partial_{x_i}u_\eps(s,x)|^2 +
  \big|u_\eps(s,x)\big|^{2p} \Big)\,ds\,dx\Big] \notag \\
 & \qquad \qquad \qquad \le C(\eps)\, T\, ||\grad \psi(\cdot)||_{L^\infty(\R^d)}, \label{esti: C-derivative}
\end{align} 
where we have used that  for fixed $\eps>0$, $ \partial_{x_i}u_\eps(s,x)$  is integrable.

Finally we move on to estimate the term $\mathcal{E}_4(\eps,\xi)$. It is easy to see that 
\begin{align}
   |\mathcal{E}_4(\eps,\xi)| &\le ||\Delta \psi(\cdot)||_{L^\infty (\R_x^d)} \eps\, E \Big[\int_{\R_x^d} \int_{s=0}^t \big|\partial_{x_i}u_\eps(s,x)\big|\,ds\,dx\Big] \le T\,C(\eps) ||\Delta \psi(\cdot)||_{L^\infty (\R^d)}  
\label{esti: D-derivative}
\end{align}
Taking advantage of \eqref{eq:approx to abosx} in \eqref{eq:levy_stoc-derivative-2} helps us to conclude 
\begin{align}
    & E \Big[\int_{\R_x^d} \big|\partial_{x_i}u_\eps(t,x)\big|\psi(x)\,dx \Big] \le
  E \Big[\int_{\R_x^d} \big|\partial_{x_i}u_\eps(0,x)\big| \psi(x)\,dx\Big] 
   + M_1 \xi \,||\psi(\cdot)||_{L^1(\R^d)} \notag \\
   & \hspace{6cm} + \mathcal{E}_1(\eps,\xi) +  \mathcal{E}_2(\eps,\xi)  +  \mathcal{E}_3(\eps,\xi)+  \mathcal{E}_4(\eps, \xi).\label{eq: for Bv estimate}
   \end{align} 
In what follows, we combine all the above estimates \eqref{eq:1}, \eqref{eq:2}, \eqref{esti: C-derivative}, and \eqref{esti: D-derivative} and then send  $\xi \mapsto 0$ in \eqref{eq: for Bv estimate} 
to obtain
\begin{align}
 & E \Big[\int_{\R_x^d} \big|\partial_{x_i}u_\eps(t,x)\big|\psi(x)\,dx\Big]  \notag \\
  & \qquad \qquad \quad \le  E \Big[\int_{\R_x^d} \big|\partial_{x_i}u_\eps(0,x)\big|\psi(x)\,dx\Big]
 + C(\eps) \Big(||\Delta \psi(\cdot)||_{L^\infty(\R^d)} + ||\grad \psi(\cdot)||_{L^\infty(\R^d)}\Big)\,T\label{eq:final-Bv-estimate}
\end{align}
To this end, we define $0\le \psi_N(x)\in C_c^2(\R^d)$ such that
\begin{align*}
   \psi_N (x)=\begin{cases} 1\quad \text{when} ~ |x|\le N\\
                            0 \quad \text{when} ~ |x|> N+1.
\end{cases}    
\end{align*} 
Note that since \eqref{eq:final-Bv-estimate} holds for $\psi(x)=\psi_N(x)$, we choose $\psi(x)=\psi_N(x)$ 
in \eqref{eq:final-Bv-estimate}, and then sending $N \goto \infty$ to obtain
   \begin{align*}
E \Big[\int_{\R_x^d} \big|\partial_{x_i}u_\eps(t,x)\big|\,dx\Big] & \le
  E \Big[\int_{\R_x^d} \big|\partial_{x_i}u_\eps(0,x)\big|\,dx \Big],
   \end{align*} which completes the proof.
\end{proof}

An important and immediate corollary of the uniform spatial BV estimate is the existence of BV bounds for the entropy solution of \eqref{eq:levy_stochconservation_laws}. We have following theorem. 
\begin{thm} [BV entropy solution]
\label{thm:existence}
Suppose that the assumptions \ref{A2}, ~\ref{A3}, and ~\ref{A4'} hold. Then there exists an unique entropy solution of \eqref{eq:levy_stochconservation_laws} with initial data satisfying assumption ~\ref{A1} such that
\begin{align}
  \label{EQ:BV-ESTIMATE}    E \Big[\abs{u(t,\cdot)}_{BV(\R^d)} \Big] \le E \Big[\abs{u_0}_{BV(\R^d)} \Big], \,\, \text{for any $t>0$.}
\end{align}
\end{thm}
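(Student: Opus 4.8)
The plan is to obtain the entropy solution $u$ as the vanishing viscosity limit of the approximations $u_\eps$ solving \eqref{eq:levy_stochconservation_laws-viscous}, and then to transfer the uniform estimate of Theorem~\ref{thm: spatial BV estimate} to the limit by lower semicontinuity. Existence and uniqueness of a stochastic entropy solution are essentially inherited from \cite{kbm2014} via the Young measure approach: the viscous approximations $u_\eps$ generate a Young measure whose disintegration is a generalized entropy solution in the sense of Definition~\ref{defi: young_stochentropsol}, and the reduction argument of \cite{kbm2014} forces this Young measure to collapse to a Dirac mass $\delta_{u(t,x)}$. Consequently $u_\eps\to u$ strongly in $L^1_{\loc}$ (in expectation, for a.e.\ $t$), the limit $u$ inherits the $L^p$ bounds from \eqref{uniform-estimates}, and $u$ is the unique entropy solution of \eqref{eq:levy_stochconservation_laws}; uniqueness within the class of entropy solutions follows because every entropy solution is in particular a generalized entropy solution, to which the uniqueness of \cite{kbm2014} applies.

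It then remains to establish the BV bound \eqref{EQ:BV-ESTIMATE}. First I would record that Theorem~\ref{thm: spatial BV estimate} supplies, uniformly in $\eps>0$,
\[
E\Big[\int_{\R_x^d} |\grad u_\eps(t,x)|\,dx\Big] \le E\Big[\int_{\R_x^d} |\grad u_0(x)|\,dx\Big] = E\big[|u_0|_{BV(\R^d)}\big],
\]
so that the family $\{u_\eps(t,\cdot)\}_{\eps>0}$ is bounded in $L^1(\Omega;BV(\R^d))$. Fixing a time $t$ for which $u_\eps(t,\cdot)\to u(t,\cdot)$ in $L^1_{\loc}(\R^d)$, $P$-almost surely along a subsequence, and using that the total variation seminorm is lower semicontinuous with respect to $L^1_{\loc}$ convergence, I obtain $|u(t,\cdot)|_{BV(\R^d)}\le \liminf_{\eps\to 0}\int_{\R_x^d}|\grad u_\eps(t,x)|\,dx$ pointwise in $\omega$. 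Taking expectations and invoking Fatou's lemma together with the uniform bound above yields
\[
E\big[|u(t,\cdot)|_{BV(\R^d)}\big] \le \liminf_{\eps\to 0} E\Big[\int_{\R_x^d} |\grad u_\eps(t,x)|\,dx\Big] \le E\big[|u_0|_{BV(\R^d)}\big],
\]
which is precisely \eqref{EQ:BV-ESTIMATE}.

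The main obstacle I anticipate is justifying the strong $L^1_{\loc}$ convergence $u_\eps\to u$ in a form compatible with the a.e.-$t$, almost-sure (or in-expectation) statement required by the lower semicontinuity step, since the uniform estimate of Theorem~\ref{thm: spatial BV estimate} is purely spatial and yields no temporal compactness on its own. To secure this I would couple the spatial BV bound with the $L^p$ estimates \eqref{uniform-estimates} and a temporal regularity estimate read off from \eqref{eq:levy_stochconservation_laws-viscous}, controlling the time increments of $u_\eps$ through the flux term, the diffusion $\eps\Delta_{xx}u_\eps$, and the It\^o--L\'evy stochastic terms (the latter handled by the isometry together with the compensator bound furnished by \ref{A3}--\ref{A4'}), thereby producing compactness in $C([0,T];L^1_{\loc})$ in probability. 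The reduction argument of \cite{kbm2014} then identifies the limit with the unique entropy solution, and the lower semicontinuity estimate closes the proof.
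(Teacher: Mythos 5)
Your proposal is correct and follows essentially the same route as the paper: both obtain $u$ as the Young-measure (vanishing viscosity) limit of the $u_\eps$ via the well-posedness theory of \cite{kbm2014}, and then transfer the uniform spatial BV bound of Theorem~\ref{thm: spatial BV estimate} to the limit. The paper compresses the final step into ``by passing to the limit,'' whereas you make explicit the lower semicontinuity of the total variation under $L^1_{\loc}$ convergence together with Fatou's lemma, and you flag the temporal-compactness issue that the paper leaves implicit in its citation of \cite{kbm2014}; these are elaborations of, not departures from, the paper's argument.
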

\begin{proof}
    We take advantage of the well-posedness results from \cite{kbm2014} and claim that the sequence $\{u_{\eps}(t,\cdot)\}$ converges, in the sense of Young measures, to the unique $L^p(\R^d)$-valued entropy solution $u(t,\cdot )$. In view of the uniform BV estimate in Theorem \ref{thm: spatial BV estimate}, by passing to the limit, we conclude \eqref{EQ:BV-ESTIMATE}. In other words, the unique $L^p$-valued entropy solution has bounded variation if the initial condition is $BV$.
\end{proof}

\section{Proof of The Main Theorem}
\label{cont-depen-estimate}
It is worth mentioning that, the average $L^1$-contraction principle [see, for example, \cite{kbm2014}] gives the continuous dependence on the initial data in stochastic balance laws
of the type \eqref{eq:levy_stochconservation_laws}. However, we intend to establish continuous dependence 
also on the nonlinearities, i.e., on the flux function and the noise coefficient. To achieve that, we need to consider the following regularized problem:
\begin{equation}
  \label{eq:levy_stochconservation_laws-viscous_2}
  \begin{cases}
  dv_\eps(s,y) + \mbox{div}_y G_\eps(v_\eps(s,y)) \,ds =  \int_{|z|> 0} \sigma_\eps(v_\eps(s,y);z)\tilde{N}(dz,\,ds)+ \eps \Delta_{yy} v_\eps (s,y)\,ds, & (s,y) \in \Pi_T, \\
  v_{\eps}(0,y) = v_0^{\eps}(y), \quad y \in \R^d;
\end{cases}
\end{equation} where $(v_0^\eps, \sigma_\epsilon, G_\epsilon)$ are regularized version of $(v_0, \sigma, G)$ satisfying the conditions in \eqref{eq:regularization}.
In view of Theorem \ref{thm:existence}, we conclude that $v_\eps(s,y)$ converges, as Young measures,  to the unique BV-entropy solution $v(s, y)$ of \eqref{eq:stability-12} with initial data $v_0(y)$. Let $u(t,\cdot)$ be the unique BV-entropy solution of  \eqref{eq:levy_stochconservation_laws} with initial data $u_0(x)$. Moreover, we assume that the assumptions \ref{A1}, \ref{A2}, \ref{A3}, and \ref{A4'} hold for both sets of given functions $(v_0, G, \sigma)$ and $(u_0, F, \eta)$.

We estimate the $L^1$-difference between two entropy solutions $u$ and $v$.  The theorem will be proved by using
the ``{\it doubling of variables}" technique. However, we can't directly compare two entropy solutions $u$ and $v$, but instead we first compare the entropy solution $u(t,x)$ with the solution of the viscous approximation \eqref{eq:levy_stochconservation_laws-viscous_2}, i.e., $v_{\eps}(s,y)$. This approach is somewhat different from the deterministic approach, where one can directly compare two entropy solutions. For
deterministic continuous dependence theory consult \cite{perthame,cockburn,chen2005,kal-resibro} and references therein.

To begin with, let $\rho$ and $\varrho$ be the standard mollifiers on $\R$ and  $\R^d$ respectively such that
  $\supp(\rho) \subset [-1,0)$ and $\supp(\varrho) = B_1(0)$. For $\delta > 0$ and $\delta_0 > 0$,
  let $\rho_{\delta_0}(r) = \frac{1}{\delta_0}\rho(\frac{r}{\delta_0})$ and
  $\varrho_{\delta}(x) = \frac{1}{\delta^d}\varrho(\frac{x}{\delta})$.
For a nonnegative test function $\psi\in C_c^{1,2}([0,\infty)\times \rd)$ with $|\grad \psi(t,x)| \le C\,\psi(t,x)$, $|\Delta \psi(t,x)| \le C\,\psi(t,x)$ and two positive constants $\delta, \delta_0 $, define            
 \begin{align}
\label{eq:doubled-variable} \phi_{\delta,\delta_0}(t,x, s,y) = \rho_{\delta_0}(t-s) \varrho_{\delta}(x-y) \psi(s,y). 
\end{align} 
Observe that $ \rho_{\delta_0}(t-s) \neq 0$ only if $s-\delta_0 \le t\le s$, and therefore $ \phi_{\delta,\delta_0}(t,x; s,y)= 0$
 outside  $s-\delta_0 \le t < s$.
 
 Furthermore, let $\varsigma$ be the standard symmetric 
nonnegative mollifier on $\R$ with support in $[-1,1]$ and $\varsigma_l(r)= \frac{1}{l} \varsigma(\frac{r}{l})$ 
for $l > 0$. We now write the entropy inequality for $u(t,x)$, based on the 
entropy pair $(\beta(\cdot-k), F^\beta(\cdot, k))$, and 
then multiply by $\varsigma_l(v_\eps(s,y)-k)$, integrate with 
respect to $ s, y, k$ and take the expectation. The result is
\begin{align}
0\le  & E \Big[\int_{\Pi_T}\int_{\R_x^d}\int_{\R_k} \beta(u(0,x)-k)
\phi_{\delta,\delta_0}(0,x,s,y) \varsigma_l(v_\eps(s,y)-k)\,dk \,dx\,dy\,ds\Big] \notag \\
 &\qquad + E \Big[\int_{\Pi_T} \int_{\Pi_T} \int_{\R_k} \beta(u(t,x)-k)\partial_t \phi_{\delta,\delta_0}(t,x,s,y)
\varsigma_l(v_\eps(s,y)-k)\,dk \,dx\,dt\,dy\,ds \Big]\notag \\ 
 & \qquad +  E \Big[ \int_{\Pi_T} \int_{\R_k}\int_{\Pi_T}\int_{|z|>0}\Big(\beta \big(u(t,x) +\eta(u(t,x);z)-k\big)-\beta(u(t,x)-k)\Big) \notag \\
& \hspace{6cm} \times \phi_{\delta,\delta_0}(t,x,s,y)\,\varsigma_l(v_\eps(s,y)-k) \,\tilde{N}(dz,dt) \,dx \,dk \,dy\,ds \Big] \notag\\
&\qquad +  E \Big[\int_{\Pi_T} \int_{t=0}^T\int_{|z|>0}\int_{\R_x^d} 
\int_{\R_k} \Big(\beta \big(u(t,x) +\eta(u(t,x);z)-k\big)-\beta(u(t,x)-k) \notag \\
 & \hspace{6.5cm}-\eta(u(t,x);z) \beta^{\prime}(u(t,x)-k)\Big)
 \phi_{\delta,\delta_0}(t,x;s,y) \notag \\
&\hspace{8cm}\times \varsigma_l(v_\eps(s,y)-k)\,dk\,dx\,\nu(dz)\,dt\,dy\,ds\Big]\notag \\
& \qquad +  E \Big[\int_{\Pi_T}\int_{\Pi_T} \int_{\R_k} 
 F^\beta(u(t,x),k) \cdot \grad_x \varrho_\delta(x-y)\,\psi(s,y)\,\rho_{\delta_0}(t-s)\notag \\
 &\hspace{9cm} \times \varsigma_l(v_\eps(s,y)-k)\,dk\,dx\,dt\,dy\,ds\Big] \notag \\
& =:  I_1 + I_2 + I_3 +I_4 + I_5. \label{stochas_entropy_1-levy}
\end{align}
 
 We now apply the It\^{o}-L\'{e}vy formula 
to \eqref{eq:levy_stochconservation_laws-viscous_2} and multiply with test function $\phi_{\delta_0, \delta}$ and $\varsigma_l(u(t,x)-k)$ and integrate . The result is
\begin{align}
 0\le  &\, E \Big[\int_{\Pi_T}\int_{\R_x^d}\int_{\R_k} 
 \beta(v_\eps(0,y)-k)\phi_{\delta,\delta_0}(t,x,0,y) \varsigma_l(u(t,x)-k)\,dk \,dx\,dy\,dt\Big] \notag \\
   & \qquad \qquad \qquad \quad +  E \Big[\int_{\Pi_T} \int_{\Pi_T} \int_{\R_k} 
 \beta(v_\eps(s,y)-k)\partial_s \phi_{\delta,\delta_0}(t,x,s,y)
 \varsigma_l(u(t,x)-k)\,dk \,dy\,ds\,dx\,dt\Big] \notag \\ 
  + &  E \Big[\int_{\Pi_T} \int_{\Pi_T}\int_{|z|>0} \int_{\R_k} 
 \Big(\beta \big(v_\eps(s,y) +\sigma_\eps(v_\eps(s,y);z)-k\big)
 -\beta(v_\eps(s,y)-k)\Big) \notag \\
 & \hspace{6.5cm} \times \phi_{\delta,\delta_0}(t,x,s,y)\varsigma_l(u(t,x)-k)\,dk \,\tilde{N}(dz,ds)\,dy\,dx\,dt \Big]\notag\\
  + &  E \Big[\int_{\Pi_T} \int_{s=0}^T\int_{|z|>0}\int_{\R_y^d} 
 \int_{\R_k} \Big(\beta \big(v_\eps(s,y) +\sigma_\eps(v_\eps(s,y);z)-k\big)
 -\beta(v_\eps(s,y)-k) \notag \\
  & \hspace{6.0cm}-\sigma_\eps(v_\eps(s,y);z) \beta^{\prime}(v_\eps(s,y)-k)\Big)
  \phi_{\delta,\delta_0}(t,x;s,y) \notag \\
 &\hspace{8.7cm}\times \varsigma_l(u(t,x)-k)\,dk\,dy\,\nu(dz)\,ds\,dx\,dt\Big]\notag \\
  &\quad+  E\Big[\int_{\Pi_T}\int_{\Pi_T} \int_{\R_k}  
 G_\eps^\beta(v_\eps(s,y),k)\cdot \grad_y\varrho_\delta(x-y) \psi(s,y)
  \rho_{\delta_0}(t-s ) \,\varsigma_l(u(t,x)-k)\,dk\,dx\,dt\,dy\,ds\Big] \notag \\
 &\quad + E \Big[\int_{\Pi_T}\int_{\Pi_T} \int_{\R_k}  
 G_\eps^\beta(v_\eps(s,y),k) \cdot \grad_y \psi(s,y) \varrho_\delta(x-y) 
  \rho_{\delta_0}(t-s ) \,\varsigma_l(u(t,x)-k)\,dk\,dx\,dt\,dy\,ds\Big] \notag \\
 &\quad - \eps  E \Big[\int_{\Pi_T} \int_{\Pi_T} \int_{\R_k} 
 \beta^\prime(v_\eps(s,y)-k)\grad_y v_\eps(s,y) \cdot \grad_y  \phi_{\delta,\delta_0}(t,x,s,y)
\,  \varsigma_l(u(t,x)-k)\,dk \,dy\,ds\,dx\,dt\Big],\label{stochas_entropy_2-levy}
\end{align} 
where $ G_\eps^\beta(a,b) = \int_a^b \beta^\prime(r-b)G^\prime_\eps(r)\,dr$. It follows by direct computations that there is $p\in \mathbb{N}$ such that
\[\big|G_\eps^\beta(a,b)- G^\beta(a, b) \big|\le C\eps \big(1+|a|^{2p}+|b|^{2p}\big). \] 
In view of the uniform moment estimates, it follows from 
\eqref{stochas_entropy_2-levy} that 
\begin{align}
 0\le  &\, E \Big[\int_{\Pi_T}\int_{\R_x^d}\int_{\R_k} 
 \beta(v_\eps(0,y)-k)\phi_{\delta,\delta_0}(t,x,0,y) \varsigma_l(u(t,x)-k)\,dk \,dx\,dy\,dt\Big] \notag \\
   & \qquad \qquad \qquad \quad +   E \Big[\int_{\Pi_T} \int_{\Pi_T} \int_{\R_k} 
 \beta(v_\eps(s,y)-k)\partial_s \phi_{\delta,\delta_0}(t,x,s,y)
 \varsigma_l(u(t,x)-k)\,dk \,dy\,ds\,dx\,dt\Big] \notag \\ 
  + &  E \Big[\int_{\Pi_T} \int_{\Pi_T}\int_{|z|>0} \int_{\R_k} 
 \Big(\beta \big(v_\eps(s,y) +\sigma_\eps(v_\eps(s,y);z)-k\big)
 -\beta(v_\eps(s,y)-k)\Big) \notag \\
 & \hspace{6.5cm} \times \phi_{\delta,\delta_0}(t,x,s,y)\varsigma_l(u(t,x)-k)\,dk \,\tilde{N}(dz,ds)\,dy\,dx\,dt \Big]\notag\\
  + &  E \Big[\int_{\Pi_T} \int_{s=0}^T\int_{|z|>0}\int_{\R_y^d} 
 \int_{\R_k} \Big(\beta \big(v_\eps(s,y) +\sigma_\eps(v_\eps(s,y);z)-k\big)
 -\beta(v_\eps(s,y)-k) \notag \\
  & \hspace{6.0cm}-\sigma_\eps(v_\eps(s,y);z) \beta^{\prime}(v_\eps(s,y)-k)\Big)
  \phi_{\delta,\delta_0}(t,x;s,y) \notag \\
 &\hspace{8.7cm}\times \varsigma_l(u(t,x)-k)\,dk\,dy\,\nu(dz)\,ds\,dx\,dt\Big]\notag \\
  & \quad +  E\Big[\int_{\Pi_T}\int_{\Pi_T} \int_{\R_k}  
 G^\beta(v_\eps(s,y),k)\cdot \grad_y\varrho_\delta(x-y) \psi(s,y)
  \rho_{\delta_0}(t-s ) \varsigma_l(u(t,x)-k)\,dk\,dx\,dt\,dy\,ds\Big] \notag \\
 &\quad + E \Big[\int_{\Pi_T}\int_{\Pi_T} \int_{\R_k}  
 G^\beta(v_\eps(s,y),k) \cdot \grad_y \psi(s,y) \varrho_\delta(x-y) 
  \rho_{\delta_0}(t-s ) \varsigma_l(u(t,x)-k)\,dk\,dx\,dt\,dy\,ds\Big] \notag \\
 -& \eps  E \Big[\int_{\Pi_T} \int_{\Pi_T} \int_{\R_k} 
 \beta^\prime(v_\eps(s,y)-k)\grad_y v_\eps(s,y) \cdot \grad_y  \phi_{\delta,\delta_0}
  \varsigma_l(u(t,x)-k)\,dk \,dy\,ds\,dx\,dt\Big] + C(\beta,\psi) \frac{\eps}{\delta} \notag \\
  & =: J_1 + J_2 + J_3 + J_4 + J_5 + J_6 + J_7 + C(\beta, \psi) \frac{\eps}{\delta}, \label{stochas_entropy_3-levy}
\end{align}
where $C(\beta, \psi)$ is a constant depending only on the quantities in the parentheses. Our aim is to add \eqref{stochas_entropy_1-levy} and \eqref{stochas_entropy_3-levy}, 
and pass to the  limits with respect to the various parameters involved. We do this by claiming
a series of lemma's and proofs of these lemmas follow from \cite{kbm2014} modulo cosmetic changes. 
 
 To begin with, note that particular choice of test function \eqref{eq:doubled-variable} implies that $J_1=0$. 
\begin{lem}
\label{stochastic_lemma_1}
It holds that 
\begin{align}
  I_1 + J_1   & \underset{\delta_0 \goto 0} \longrightarrow E \Big[\int_{\R_y^d}\int_{\R_x^d}\int_{\R_k} 
  \beta(u(0,x)-k)\psi(0,y)\varrho_{\delta} (x-y) \varsigma_l(v_\eps(0,y)-k)\,dk\,dx\,dy \Big]\notag\\
  &\underset{l \goto 0} \longrightarrow  E \Big[\int_{\R_y^d}\int_{\R_x^d}
  \beta(u(0,x)-v_\eps(0,y))\psi(0,y)\varrho_{\delta} (x-y)\,dx\,dy\Big]\notag.
 \end{align}
\end{lem}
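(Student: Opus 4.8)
The plan is to identify both limits through the approximate-identity behaviour of the mollifiers $\rho_{\delta_0}$ and $\varsigma_l$. First I would record that $J_1=0$, which is immediate from the support condition $\supp(\rho)\subset[-1,0)$: in $J_1$ the test function reduces to $\rho_{\delta_0}(t)\,\varrho_\delta(x-y)\,\psi(0,y)$, and $\rho_{\delta_0}(t)$ vanishes for every $t$ in the integration domain $(0,T)$. Thus only $I_1$ contributes, and in $I_1$ the test function is $\rho_{\delta_0}(-s)\,\varrho_\delta(x-y)\,\psi(s,y)$, where $s\mapsto\rho_{\delta_0}(-s)$ is supported in $(0,\delta_0]$ with total mass one, hence a one-sided approximate identity concentrating at $s=0^{+}$.

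For the first arrow, I would use Fubini to write
\begin{align*}
I_1 = \int_0^T \rho_{\delta_0}(-s)\,h(s)\,ds,
\end{align*}
where
\begin{align*}
h(s) := E\Big[\int_{\R_y^d}\int_{\R_x^d}\int_{\R_k} \beta(u(0,x)-k)\,\varrho_\delta(x-y)\,\psi(s,y)\,\varsigma_l(v_\eps(s,y)-k)\,dk\,dx\,dy\Big],
\end{align*}
which is legitimate once joint integrability on $(0,\delta_0)\times\Omega$ is checked. The crucial observation is that $h$ is right-continuous at $s=0$: the viscous solution $v_\eps$ of \eqref{eq:levy_stochconservation_laws-viscous_2} has c\`adl\`ag sample paths in time, so $v_\eps(s,\cdot)\to v_\eps(0,\cdot)$ almost surely as $s\downarrow 0$, and together with the continuity of $\psi$ this forces the inner integrand to converge pointwise; a dominated-convergence argument then gives $h(s)\to h(0)$. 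The standard approximate-identity estimate then yields $I_1\to h(0)$, which is exactly the first right-hand side in the statement.

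For the second arrow, I would exploit that, as $l\downarrow 0$, the symmetric mollifier $\varsigma_l(v_\eps(0,y)-\cdot)$ concentrates at $k=v_\eps(0,y)$. Since $k\mapsto\beta(u(0,x)-k)$ is continuous,
\begin{align*}
\int_{\R_k} \beta(u(0,x)-k)\,\varsigma_l(v_\eps(0,y)-k)\,dk \longrightarrow \beta\big(u(0,x)-v_\eps(0,y)\big)
\end{align*}
pointwise as $l\downarrow 0$, and integrating against $\varrho_\delta(x-y)\,\psi(0,y)$ over the compactly supported region and taking expectation produces the second right-hand side.

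The genuinely technical point, as opposed to the formal mollification which is routine, is the justification of the two dominated-convergence passages, i.e.\ securing majorants that are integrable in $(s,\omega)$ and in $(k,x,y,\omega)$ uniformly in $\delta_0$ and $l$ respectively. Here I would use that $\varrho_\delta$ and $\psi$ are compactly supported, that $\beta(r)\le|r|$ gives at most linear growth of $\beta(u(0,x)-k)$, and that $\varsigma_l$ localizes $k$ near $v_\eps(0,y)$; the remaining moments $E[|u(0,x)|]$ and $\sup_{s}E[|v_\eps(s,y)|]$ are controlled by assumption~\ref{A1} and the uniform estimate \eqref{uniform-estimates}. The only substantive input beyond bookkeeping is the path regularity of $v_\eps$, whose right-continuity at $s=0$ is precisely what the one-sided support of $\rho$ is designed to exploit.
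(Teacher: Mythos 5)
Your argument is correct and is precisely the standard mollification/approximate-identity argument that the paper itself does not spell out but defers to \cite{kbm2014} (``proofs of these lemmas follow from \cite{kbm2014} modulo cosmetic changes''): $J_1=0$ from the one-sided support of $\rho$, the $\delta_0\to0$ limit from right-continuity of $s\mapsto v_\eps(s,\cdot)$ at $s=0$, and the $l\to0$ limit from $\varsigma_l$ concentrating at $k=v_\eps(0,y)$. The only point worth tightening is the phrase ``$v_\eps(s,\cdot)\to v_\eps(0,\cdot)$ almost surely'': the c\`adl\`ag property holds for the $L^2(\R^d)$-valued path, so the right-continuity should be used in that norm (which suffices, since $\varsigma_l$ and $\beta$ are Lipschitz for fixed $l$), rather than pointwise in $y$.
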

We now turn our attention to $(I_2 + J_2)$. Since $\beta$, $\varsigma_l$ are even functions, we see that
 \begin{align*}
I_2+ J_2  =&E\Big[\int_{\Pi_T} \int_{ \Pi_T} \int_{\R_k}  
  \beta(v_\eps(s,y)-k) \partial_s\psi(s,y)\, \rho_{\delta_0}(t-s)
 \varrho_\delta(x-y)\\
 &\hspace{7.5cm}\times \varsigma_l(u(t,x)-k)\,dk\,dy\,ds\,dxdt\Big]. 
 \end{align*}
 
 \begin{lem}\label{stochastic_lemma_2}
It holds that
\begin{align*}
I_2 + J_2  &\underset{\delta_0 \goto 0}\longrightarrow   E\Big[ \int_{\Pi_T}\int_{\R_y^d}\int_{\R_k}
    \beta(v_\eps(s,y)-k) \partial_s\psi(s,y)\varrho_\delta(x-y)\varsigma_l(u(s,x)-k)\,dk\,dy\,dx\,ds\Big]\\
&\underset{l \goto 0}\longrightarrow  E \Big[\int_{\Pi_T}\int_{\R_y^d} \beta(v_\eps(s,y)-u(s,x)) \partial_s\psi(s,y)
 \, \varrho_\delta(x-y)\,dy\,dx\,ds\Big].
\end{align*}
\end{lem}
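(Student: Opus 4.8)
The plan is to establish the two stated limits in succession; each is an approximate-identity computation, but the passage $\delta_0\goto 0$ is the delicate one because it must reckon with the stochastic time dependence of $u$. Throughout, all interchanges of a limit with $E[\cdot]$ and with the spatial integrations will be justified by dominated convergence, the dominating functions being furnished by the boundedness of $\varsigma_l$, the linear growth of $\beta$ (recall $|\beta(r)|\le |r|$), the compact supports of $\psi$ and $\varrho_\delta$, and the uniform $L^p$ moment bounds on $u$ and $v_\eps$ (cf.\ \eqref{uniform-estimates}).

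For the limit $\delta_0\goto 0$ I would begin from the simplified expression for $I_2+J_2$ and, by Fubini, carry out the inner time integration first, isolating $\Phi_{\delta_0}(s,x,k):=\int_0^T \rho_{\delta_0}(t-s)\,\varsigma_l(u(t,x)-k)\,dt$ with the variables $s,x,k$ (and $\omega\in\Omega$) held fixed. Since $\rho_{\delta_0}$ is a nonnegative kernel of unit mass supported in $[-\delta_0,0)$ and $\varsigma_l$ is bounded, $\Phi_{\delta_0}$ is bounded by $\|\varsigma_l\|_{L^\infty}$ uniformly in $\delta_0$; moreover, for a.e.\ $s$ it converges to $\varsigma_l(u(s,x)-k)$ by the approximate-identity property together with the a.e.\ time-continuity of $t\mapsto u(t,x)$. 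The remaining prefactor $\beta(v_\eps(s,y)-k)\,\partial_s\psi(s,y)\,\varrho_\delta(x-y)$ is, by the moment bounds and the compact supports of $\psi$ and $\varrho_\delta$, integrable in $(s,x,y,k,\omega)$ and independent of $\delta_0$, so dominated convergence collapses the two time variables and removes the extra $\int dt$, producing the first displayed limit.

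For the limit $l\goto 0$ I would note that $\varsigma_l$ is a symmetric mollifier on $\R$, so for fixed $s,x,y$ the $k$-integral $\int_{\R_k}\beta(v_\eps(s,y)-k)\,\varsigma_l(u(s,x)-k)\,dk$ is exactly the convolution of the continuous function $k\mapsto\beta(v_\eps(s,y)-k)$ against $\varsigma_l$, evaluated at $u(s,x)$. By the approximate-identity property this tends to $\beta(v_\eps(s,y)-u(s,x))$ as $l\goto 0$, and integrable domination (uniform in $l\le 1$) is again available from the linear growth of $\beta$ and the moment bounds. Dominated convergence then yields the second displayed limit.

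The main obstacle is precisely the a.e.-in-$s$ convergence of $\Phi_{\delta_0}$. Because $\supp(\rho)\subset[-1,0)$, the mollification is one-sided with $t<s$, so as $\delta_0\downarrow 0$ one is genuinely recovering the left-hand limit $u(s^-,x)$ rather than $u(s,x)$. This is where the stochastic nature enters: the solution is only a predictable, c\`adl\`ag-in-time process and need not be time-continuous. The resolution is that for each fixed $x$ and $\omega$ the jump set of $t\mapsto u(t,x)$ is at most countable, hence Lebesgue-null, so $u(s^-,x)=u(s,x)$ for a.e.\ $s$; after integrating in $s$ and taking expectation the distinction is invisible and the stated limit holds. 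The one-sided choice of $\rho$ is not cosmetic---it is what keeps the stochastic integrals nonanticipative in the companion estimates---so it cannot be symmetrized away; this is the only genuinely stochastic point, the rest being the ``cosmetic changes'' relative to \cite{kbm2014}.
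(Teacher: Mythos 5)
Your argument is correct and is essentially the proof the paper has in mind: the paper does not write out Lemma~\ref{stochastic_lemma_2} at all, stating only that these lemmas ``follow from \cite{kbm2014} modulo cosmetic changes,'' and what one finds there is precisely your two-step approximate-identity computation (collapse the one-sided time mollification first, then the $k$-mollification), with dominated convergence supplied by the moment bounds, the linear growth of $\beta$, and the compact supports of $\psi$ and $\varrho_\delta$. Your treatment of the $l\goto 0$ step as a convolution of the continuous function $k\mapsto\beta(v_\eps(s,y)-k)$ with the symmetric mollifier $\varsigma_l$ evaluated at $u(s,x)$ is exactly right.

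The one point to tighten is your justification of the a.e.-in-$s$ convergence of $\Phi_{\delta_0}$: you invoke c\`adl\`ag paths of $t\mapsto u(t,x)$ and countability of its jump set. For the viscous solution this is available, but for the entropy solution $u$ Definition~\ref{defi:stochentropsol} only requires a predictable $L^2(\R^d)$-valued process with moment bounds, and temporal path regularity of the limit object is precisely one of the delicate issues the authors flag in connection with \cite{nualart:2008,biswas-majee 2013}, so it should not be assumed silently. Fortunately the conclusion does not need it: for fixed $(x,k,\omega)$ the map $t\mapsto\varsigma_l(u(t,x)-k)$ is bounded and measurable, almost every $s\in(0,T)$ is a Lebesgue point of it, and since $0\le\rho_{\delta_0}\le C/\delta_0$ is supported in $[s-\delta_0,s)$ one has
\[
\Big|\int_0^T\rho_{\delta_0}(t-s)\,\varsigma_l(u(t,x)-k)\,dt-\varsigma_l(u(s,x)-k)\Big|
\le \frac{C}{\delta_0}\int_{s-\delta_0}^{s}\big|\varsigma_l(u(t,x)-k)-\varsigma_l(u(s,x)-k)\big|\,dt\goto 0
\]
at every such Lebesgue point; by Fubini the exceptional set is null in $(s,x,k,\omega)$ and your domination step goes through unchanged. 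With this substitution (or with a separate argument that $u$ admits a c\`adl\`ag version) the proof is complete, and your remark that the one-sidedness of $\rho$ is forced by nonanticipativity of the stochastic integrals, yet harmless here because it only costs a Lebesgue-null set of times, is the right way to frame the only genuinely stochastic feature of the lemma.
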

Next, we consider the term $I_5 + J_5$ and regarding these terms we have the following lemma.
\begin{lem}\label{stochastic_lemma_4}
The following hold:
\begin{align}
 \lim_{l\goto 0}\lim_{\delta_0 \goto 0} I_5
 = E\Big[\int_{s=0}^T \int_{\R_y^d}\int_{\R_x^d} F^\beta(u(s,x),v_\eps(s,y))\cdot \grad_x \varrho_\delta(x-y)
 \,\psi(s,y)\,dx\,dy\,ds\Big]\label{estim:I_5}
\end{align} 
and 
\begin{align}
 \lim_{l\goto 0}\lim_{\delta_0 \goto 0} J_5
 = E\Big[\int_{s=0}^T \int_{\R_y^d}\int_{\R_x^d} G^\beta(v_\eps(s,y),u(s,x))\cdot \grad_y \varrho_\delta(x-y)
 \,\psi(s,y)\,dx\,dy\,ds\Big]\label{estim:J_5}
\end{align} 
\end{lem}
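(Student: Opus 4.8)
The plan is to pass to the two limits in the order prescribed by the statement, $\delta_0 \goto 0$ first and $l \goto 0$ second, and to handle $I_5$ and $J_5$ by the same mechanism, since they differ only by interchanging the roles of $u$ and $v_\eps$ (and replacing $\grad_x, F^\beta$ by $\grad_y, G^\beta$). For $I_5$ I would freeze $(s,y)$ and $k$ and read $\rho_{\delta_0}(t-s)$ as a one-sided approximate identity in the time variable: setting $g_{s,y,k}(t):=\int_{\R_x^d} F^\beta(u(t,x),k)\cdot\grad_x\varrho_\delta(x-y)\,dx$, the inner $t$-integral is exactly the mollification $\int_0^T g_{s,y,k}(t)\,\rho_{\delta_0}(t-s)\,dt$. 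Since $\supp\rho\subset[-1,0)$, this averages $g_{s,y,k}$ over $t\in[s-\delta_0,s)$ and, by the approximation-of-the-identity (Lebesgue differentiation) theorem, converges to $g_{s,y,k}(s^-)$ at every left-Lebesgue point. Because the sample paths $t\mapsto u(t,\cdot)$ are only c\`adl\`ag, the set of times where $g_{s,y,k}$ fails to be left-continuous is at most countable for each realization and hence Lebesgue-null in $s$; thus the averages converge to $g_{s,y,k}(s)$ for a.e.\ $(s,\omega)$.

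The interchange of this pointwise limit with the $(s,y,k)$-integration and the expectation would be justified by dominated convergence. Here I would invoke assumption \ref{A2}: since $F'$ has at most polynomial growth, $|F^\beta(a,b)|\le C(1+|a|^{p_0}+|b|^{p_0})$ for some $p_0\in\mathbb{N}$, while the $k$-support of $\varsigma_l$ (contained in $[-l,l]\subset[-1,1]$ for $l\le 1$) forces $|k|\le |v_\eps(s,y)|+1$ on the domain of integration. Combined with the boundedness and compact support of $\grad_x\varrho_\delta$ and $\psi$, and with the uniform moment bounds \eqref{uniform-estimates} together with the $L^p$ bounds on $u$ and $v_\eps$, this furnishes a dominating function of the form $C\big(1+|u(t,x)|^{p_0}+|v_\eps(s,y)|^{p_0}\big)|\grad_x\varrho_\delta(x-y)|\,\psi(s,y)$ that is integrable and independent of $\delta_0,l\le 1$. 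This yields
\[
\lim_{\delta_0\goto 0} I_5 = E\Big[\int_{s=0}^T\int_{\R_y^d}\int_{\R_x^d}\int_{\R_k} F^\beta(u(s,x),k)\cdot\grad_x\varrho_\delta(x-y)\,\psi(s,y)\,\varsigma_l(v_\eps(s,y)-k)\,dk\,dx\,dy\,ds\Big].
\]

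For the second limit I would exploit that $\varsigma_l(v_\eps(s,y)-\cdot)$ is a symmetric approximate identity in $k$ concentrating at $k=v_\eps(s,y)$ and that $k\mapsto F^\beta(u(s,x),k)$ is continuous; hence the $k$-integral converges to its value at $k=v_\eps(s,y)$. The same dominating function (now uniform in $l$) again legitimizes passing the limit under the expectation, giving exactly \eqref{estim:I_5}. The argument for $J_5$ is identical after swapping $u\leftrightarrow v_\eps$, $F^\beta\leftrightarrow G^\beta$ and $\grad_x\leftrightarrow\grad_y$, and it is in fact slightly cleaner, because $v_\eps$ is the smooth viscous solution, so the map entering the $\delta_0$-limit is continuous in time and no null-set argument is needed.

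The step I expect to be the main obstacle is the $\delta_0\goto 0$ passage for $I_5$: the c\`adl\`ag-in-time nature of the entropy solution $u$ means the one-sided time mollifier selects left limits rather than function values, so I must argue that this discrepancy is confined to a Lebesgue-null set of times for each $\omega$ and is therefore invisible after integrating in $s$ and taking the expectation. Everything else is a routine approximate-identity computation combined with the a priori moment estimates, which is precisely why the proof reduces to \cite{kbm2014} modulo cosmetic changes.
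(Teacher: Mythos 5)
Your argument is correct and is essentially the proof the paper has in mind: the paper gives no details for this lemma, deferring to \cite{kbm2014} ``modulo cosmetic changes,'' and the intended proof is exactly this two-stage approximate-identity passage (the one-sided time mollifier $\rho_{\delta_0}$ first, then the $k$-mollifier $\varsigma_l$), justified by the polynomial growth of $F^\beta$, $G^\beta$, the compact supports of $\grad_x\varrho_\delta$, $\psi$, $\varsigma_l$, and the uniform moment bounds. Two small points to tighten: the $\delta_0$-limit is most cleanly closed via the $L^1$-continuity of mollification rather than pointwise dominated convergence (the dominating function you write controls the integrand itself, but the pointwise envelope of the time-mollified family is a maximal function, which need not be integrable), and in $J_5$ the variable collapsed by $\rho_{\delta_0}$ is still the one carrying $u$ (through $\varsigma_l(u(t,x)-k)$), so the time-regularity of $v_\eps$ does not actually make that step any cleaner than for $I_5$.
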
 
\begin{lem} \label{stochastic_lemma_5}
It holds that
\begin{align*}
J_6  &\underset{\delta_0 \goto 0}{\rightarrow}  E \Big[\int_{\Pi_T}\int_{\R_x^d}
\int_{\R_k}  G^\beta(v_\eps(s,y),k)\cdot\grad_y \psi(s,y)\,\varrho_\delta(x-y)  \varsigma_l(u(s,x)-k)\,dk\,dx\,dy\,ds\Big] \\
&\underset{l \goto 0}{\rightarrow}  
E \Big[\int_{\Pi_T}\int_{\R_x^d} G^\beta(v_\eps(s,y),u(s,x))\cdot \grad_y \psi(s,y) \varrho_\delta(x-y)\,dx\,dy\,ds\Big].
 \end{align*}
\end{lem}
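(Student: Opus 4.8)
The plan is to collapse the two mollifications appearing in $J_6$ one at a time: first send $\delta_0\goto 0$ (the temporal mollifier $\rho_{\delta_0}(t-s)$), then send $l\goto 0$ (the entropy-variable mollifier $\varsigma_l$), passing each limit under the expectation by dominated convergence. The starting observation is that in the integrand of $J_6$ the variable $t$ enters \emph{only} through the two factors $\rho_{\delta_0}(t-s)$ and $\varsigma_l(u(t,x)-k)$, since $G^\beta(v_\eps(s,y),k)\,\grad_y\psi(s,y)\,\varrho_\delta(x-y)$ is independent of $t$. Hence the inner $t$-integral is exactly $\int_0^T \rho_{\delta_0}(t-s)\,\varsigma_l(u(t,x)-k)\dt$, a one-sided average of $t\mapsto\varsigma_l(u(t,x)-k)$ over $t\in[s-\delta_0,s)$.

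For the first limit I would exploit the left support $\supp(\rho)\subset[-1,0)$: the average runs over $t\uparrow s$, so for every $s$ one has $\int_0^T \rho_{\delta_0}(t-s)\,\varsigma_l(u(t,x)-k)\dt \goto \varsigma_l(u(s^-,x)-k)$ as $\delta_0\goto 0$, the left limit. Since $t\mapsto u(t,x)$ is c\`adl\`ag, its jump set is at most countable, hence Lebesgue-null in $s$, so $u(s^-,x)=u(s,x)$ and the limit equals $\varsigma_l(u(s,x)-k)$ for a.e.\ $s$ and a.s. To move this convergence under $E\big[\int\cdots\big]$ I would dominate the integrand using $|\varsigma_l|\le\norm{\varsigma_l}_{L^\infty}$ (with $l$ fixed) together with the at-most-polynomial growth of $G'$ (assumption \ref{A2}, which forces $|G^\beta(v_\eps(s,y),k)|\le C(1+|v_\eps(s,y)|^{2p}+|k|^{2p})$ on the compact $k$-range $\{|u(s,x)-k|\le l\}$), the compact support of $\grad_y\psi$, the localisation by $\varrho_\delta(x-y)$, and the uniform moment bounds of Proposition~\ref{prop:vanishing viscosity-solution}$(b)$. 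Fubini together with dominated convergence then yields the first displayed limit of the lemma.

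For the second limit I would use that $\varsigma_l(u(s,x)-\cdot)$ is an approximate identity in $k$ concentrated at $k=u(s,x)$, while $k\mapsto G^\beta(v_\eps(s,y),k)=\int_k^{v_\eps(s,y)}\beta'(r-k)G'(r)\dr$ is continuous (indeed locally Lipschitz in $k$). Hence $\int_{\R_k}G^\beta(v_\eps(s,y),k)\,\varsigma_l(u(s,x)-k)\,dk\goto G^\beta(v_\eps(s,y),u(s,x))$ for a.e.\ $(s,x,y)$ and a.s.\ as $l\goto 0$. The same polynomial bound and moment estimates as above, now uniform for $l\le 1$, provide an integrable dominating function, so a final application of dominated convergence produces the second displayed limit and completes the proof.

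The step I expect to be the main obstacle is the passage $\delta_0\goto 0$, because the L\'evy forcing makes $t\mapsto u(t,x)$ genuinely discontinuous; the resolution is precisely that the one-sided average converges to the left limit $u(s^-,x)$, which coincides with $u(s,x)$ for all but the countably many (Lebesgue-null) jump times. The step $l\goto 0$ is routine once the moment bounds are in hand. As remarked before the lemma, these manipulations mirror those in \cite{kbm2014} up to cosmetic changes.
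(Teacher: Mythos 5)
Your proposal is essentially the argument the paper intends: the paper offers no proof of this lemma, stating only that it ``follows from \cite{kbm2014} modulo cosmetic changes,'' and the standard proof there is exactly your two-step dominated-convergence scheme --- collapse the one-sided temporal mollifier $\rho_{\delta_0}$ first, then the entropy-variable mollifier $\varsigma_l$, with domination supplied by the polynomial growth of $G'$, the compact support of $\grad_y\psi$ and $\varrho_\delta$, and the uniform moment bounds of Proposition~\ref{prop:vanishing viscosity-solution}. The $l\goto 0$ step is fine as written: $k\mapsto G^\beta(v,k)$ is locally Lipschitz and $\varsigma_l$ is a symmetric approximate identity.

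The one soft spot is your justification of the $\delta_0\goto 0$ step via c\`adl\`ag regularity of $t\mapsto u(t,x)$ \emph{for fixed $x$}. The entropy solution is only an $L^p(\R^d)$-valued predictable process, so pointwise-in-$x$ path regularity is neither defined without choosing a version nor established anywhere in the paper; at best one has c\`adl\`ag paths in $L^p(\R^d)$. Fortunately you do not need it: since $t\mapsto u(t,\cdot)$ is Bochner integrable as an $L^1(\Omega\times K)$-valued map on $(0,T)$ (for $K$ a compact set containing the relevant supports), almost every $s$ is a Lebesgue point, and the bound $\rho_{\delta_0}(t-s)\le \delta_0^{-1}\norm{\rho}_{L^\infty}\mathds{1}_{[s-\delta_0,s)}(t)$ together with the Lipschitz continuity of $\varsigma_l$ (for fixed $l$) gives
\begin{align*}
\int_0^T \rho_{\delta_0}(t-s)\,\big|H(t,s)-H(s,s)\big|\,dt \;\le\; C(l,\delta,\psi)\,\frac{\norm{\rho}_{L^\infty}}{\delta_0}\int_{s-\delta_0}^{s} E\big[\norm{u(t,\cdot)-u(s,\cdot)}_{L^1(K)}\big]\,dt \;\goto\; 0
\end{align*}
for a.e.\ $s$, where $H(t,s)$ denotes the inner expectation. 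This replaces your appeal to left limits and countable jump sets with the Bochner--Lebesgue differentiation theorem and requires no path regularity at all; with that substitution your proof is complete and matches the intended argument.
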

Next, we consider the term $J_7$. Thanks to the uniform spatial $BV$ estimate for vanishing viscosity solution (cf. Theorem~\ref{thm: spatial BV estimate}), we conclude that
\begin{align}
|J_7| &\le \eps ||\beta^\prime||_{\infty} 
\Big|E \Big[\int_{\Pi_T}\int_{\R_x^d}|\grad_y v_\eps(s,y)| 
|\grad_y[\psi(s,y)\varrho_\delta(x-y)|\,dx\,dy\,ds\Big]\Big|\notag \\
&\le\eps \, ||\beta^\prime||_{\infty} E\Big[\int_{|y|\le K}\int_{t=0}^T\int_{\R_x^d}
|\grad_y v_\eps(t,y)|\,|\grad_y[\psi(t,y)\varrho_\delta(x-y)]|\,dx\,dt\,dy\Big]\notag \\
& \le C \frac{\eps}{\delta} E\big[ |v_0|_{BV(\R^d)}\big] \label{esti:J_7-eps}
\end{align}
\begin{lem}
\label{stochastic_lemma_6} 
It holds that
\begin{align}
&\lim_{l\goto 0}\lim_{\delta_{0}\goto 0} J_4 = E \Big[\int_{\Pi_T} 
\int_{\R_x^d}\int_{|z|>0}\int_{\lambda =0}^1 
(1-\lambda)\beta^{\prime\prime} \big(v_\eps(s,y)-u(s,x)
+\lambda \sigma_{\eps}(v_\eps(s,y);z)\big)\notag \\
&\hspace{4.5cm} \times|\sigma_{\eps}(v_\eps(s,y);z)|^2\psi(s,y)
\varrho_{\delta}(x-y)\,d\lambda \,\nu(dz)\,dx\,dy \,ds\Big],\label{eq:J_4-delta} \\
&\lim_{l\goto 0}\lim_{\delta_{0}\goto 0} I_4 
= E \Big[\int_{\Pi_T} \int_{\R_x^d}\int_{|z|>0} 
\int_{\lambda =0}^1  (1-\lambda)
\beta^{\prime\prime} \big(u(s,x)-v_\eps(s,y) 
+\lambda \eta(u(s,x);z)\big)\notag \\
&\hspace{4.5cm} \times|\eta(u(s,x);z)|^2
\psi(s,y)\varrho_{\delta}(x-y) \,d\lambda \,\nu(dz)\,dx\,dy \,ds\Big].\label{eq:I_4-delta}
\end{align}
\end{lem}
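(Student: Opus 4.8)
The plan is to treat $I_4$ and $J_4$ symmetrically; I describe $I_4$ in detail, the term $J_4$ being handled verbatim after interchanging the roles of $(u,t,x,\eta)$ and $(v_\eps,s,y,\sigma_\eps)$. The starting observation is that the bracketed integrand of $I_4$ is a second-order finite difference of $\beta$, for which the exact Taylor identity
\begin{align*}
\beta(a+h)-\beta(a)-h\,\beta^\prime(a) = h^2\int_{\lambda=0}^1 (1-\lambda)\,\beta^{\prime\prime}(a+\lambda h)\,d\lambda
\end{align*}
holds with $a=u(t,x)-k$ and $h=\eta(u(t,x);z)$. Substituting this identity into $I_4$ reproduces precisely the $(1-\lambda)\,\beta^{\prime\prime}$ structure of the claimed limit, with the inner $\lambda$-integral and the factor $|\eta(u(t,x);z)|^2$ already in place. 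It is important to note that $I_4$ and $J_4$ are compensator terms, integrated against $\nu(dz)\,dt$ rather than $\tilde N$, so no It\^o isometry or martingale cancellation is required: the whole argument is a pair of dominated-convergence passages carried out under the expectation.

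Next I would send $\delta_0\downarrow 0$. Since $\phi_{\delta,\delta_0}(t,x,s,y)=\rho_{\delta_0}(t-s)\,\varrho_\delta(x-y)\,\psi(s,y)$ and $\rho_{\delta_0}$ is a one-sided approximate identity supported in $[-\delta_0,0)$, the $t$-integration is a left-sided average of the $t$-dependent factor $|\eta(u(t,x);z)|^2\,\beta^{\prime\prime}\big(u(t,x)-k+\lambda\,\eta(u(t,x);z)\big)$ against $\rho_{\delta_0}(t-s)$. As $\delta_0\downarrow 0$ this average converges to the value at time $s$ for a.e.\ $s$, so the $t$-dependence is replaced by $s$ and the factor $\rho_{\delta_0}(t-s)$ disappears. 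I would then send $l\downarrow 0$: the symmetric mollifier $\varsigma_l(v_\eps(s,y)-k)$ collapses the $k$-integration onto $k=v_\eps(s,y)$, turning $\beta^{\prime\prime}\big(u(s,x)-k+\lambda\,\eta\big)$ into $\beta^{\prime\prime}\big(u(s,x)-v_\eps(s,y)+\lambda\,\eta(u(s,x);z)\big)$. This is exactly \eqref{eq:I_4-delta}; the corresponding two steps applied to $\varsigma_l(u(t,x)-k)$ in $J_4$ (collapsing first $t\to s$ and then $k\to u(s,x)$) give \eqref{eq:J_4-delta}.

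Both passages rest on one uniform, parameter-independent domination, which is where the hypotheses enter. By the construction of the entropy $\beta=\beta_\xi$, the function $\beta^{\prime\prime}\ge 0$ is bounded and compactly supported, and \ref{A3} gives $|\eta(u;z)|^2\le C(1+|u|)^2(1\wedge|z|^2)$; hence the integrand is dominated by $C\,\|\beta^{\prime\prime}\|_\infty\,(1+|u(t,x)|)^2(1\wedge|z|^2)$ times the fixed test factors. The $z$-integral of $(1\wedge|z|^2)$ against $\nu$ is finite by \ref{A4'}, and the $L^2$ moment estimate of Proposition \ref{prop:vanishing viscosity-solution} controls $(1+|u|)^2$ in $(t,x)$ and under expectation, yielding an integrable majorant independent of $\delta_0$ and $l$ and thereby legitimizing dominated convergence. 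The main obstacle is the $\delta_0\downarrow 0$ step: collapsing $t$ onto $s$ is not a statement about a smooth integrand but about the time behaviour of the stochastic solution, so I must invoke its c\`adl\`ag representative --- for $v_\eps$ directly from the It\^o--L\'evy dynamics, and for $u$ from the Young-measure limit of Theorem \ref{thm:existence} --- together with the elementary fact that one-sided averages of a c\`adl\`ag function converge at almost every time (the jump set being countable, hence Lebesgue-null in time).
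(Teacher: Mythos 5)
Your proposal is correct in substance and follows the route the paper itself relies on (it delegates the proof of this lemma to \cite{kbm2014} ``modulo cosmetic changes''): the exact second--order Taylor identity $\beta(a+h)-\beta(a)-h\beta'(a)=h^{2}\int_{0}^{1}(1-\lambda)\beta''(a+\lambda h)\,d\lambda$ turns the integrand of $I_4$ (resp.\ $J_4$) into the claimed $(1-\lambda)\beta''$ form, and since these are compensator terms integrated against $\nu(dz)\,dt$ the two limits $\delta_0\downarrow 0$ and $l\downarrow 0$ are indeed just approximate--identity collapses under a single dominated--convergence majorant, which you correctly assemble from the bound $\|\beta_\xi''\|_\infty\le M_2/\xi$, assumption \ref{A3}, assumption \ref{A4'} and the moment estimates of Proposition \ref{prop:vanishing viscosity-solution}. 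The one soft spot is your justification of the $\delta_0\downarrow 0$ step for $u$: the paper never establishes a c\`adl\`ag (in time) representative of the entropy solution --- Theorem \ref{thm:existence} only provides $u$ as a Young--measure limit with moment bounds --- so invoking countability of the jump set is not available for $u$. This is easily repaired and does not affect the conclusion: since $u\in L^1(\Omega\times\Pi_T)$ locally, continuity of translations in $L^1$ gives $\int_s\int_t\rho_{\delta_0}(t-s)\,E\big[\,|g(t,\cdot)-g(s,\cdot)|\,\big]\,dt\,ds\goto 0$ for the relevant composite integrand $g$ (equivalently, a.e.\ $s$ is a Lebesgue point and one--sided averages converge there), which is the standard argument used in \cite{kbm2014} and suffices in place of path regularity. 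For $v_\eps$ your appeal to the It\^o--L\'evy dynamics is fine.
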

Finally, we consider the stochastic term 
$ I_3 + J_3$; 
\begin{lem}\label{stochastic_lemma_3} 
It holds that $J_3 = 0$ and 
\begin{align} 
\lim_{l\goto 0}\lim_{\delta_0 \goto 0} I_3 & = E\Big[\int_{\Pi_{T}}\int_{\R_x^d}\int_{|z|>0}
\Big( \beta(u(r,x)+ \eta(u(r,x);z)
-v_\eps(r,y)-\sigma_\eps(v_\eps(r,y);z)) \notag \\
& \hspace{3cm}-\beta(u(r,x)-v_\eps(r,y)-\sigma_\eps(v_\eps(r,y);z))+ \beta\big(u(r,x)-v_\eps(r,y)\big) \notag \\
&  \hspace{2.5cm}-\beta\big(u(r,x)+ \eta(u(r,x);z)-v_\eps(r,y)\big) \Big) 
 \psi(r,y)\,\varrho_{\delta}(x-y)\,\nu(dz)\,dx\,dy\,dr\Big]. \notag
\end{align}
\end{lem}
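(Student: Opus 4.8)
The two assertions are of very different natures, and I would treat them separately; the contrast between them is exactly the ``noise--noise interaction'' phenomenon. The structural fact I would exploit throughout is that, because $\supp(\rho)\subset[-1,0)$, the factor $\rho_{\delta_0}(t-s)$ forces $t<s$ on the support of $\phi_{\delta,\delta_0}$. For the claim $J_3=0$, note that the compensated integral there is taken in the $s$--variable, against $\tilde{N}(dz,ds)$, so I would simply check that the integrand is predictable in $s$. The factor built from $v_\eps(s,y)$ and $\sigma_\eps(v_\eps(s,y);z)$ is predictable in $s$ since $v_\eps$ is a predictable process; the mollifier part $\rho_{\delta_0}(t-s)\psi(s,y)$ is deterministic; and the remaining factor $\varsigma_l(u(t,x)-k)$ depends on $u$ only through the time $t$, which satisfies $t<s$ on the support. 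Hence $u(t,x)$ is $\mathcal{F}_t$--measurable with $t<s$, so it is $\mathcal{F}_{s^-}$--measurable and the full integrand is $\mathcal{F}_{s^-}$--predictable. A stochastic Fubini argument, legitimate by the moment bounds \eqref{uniform-estimates}, then lets me pull the deterministic $dk\,dy\,dx\,dt$ integrations outside, and the martingale property of the compensated integral of a predictable integrand gives $J_3=0$.

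For $I_3$ the situation is genuinely different: now the compensated integral is in the $t$--variable, against $\tilde{N}(dz,dt)$, and the offending factor is $\varsigma_l(v_\eps(s,y)-k)$ with $s>t$. Since $v_\eps(s,y)$ is $\mathcal{F}_s$--measurable with $s>t$, the integrand is \emph{not} predictable in $t$, so $I_3$ does not vanish. The plan is to write $\tilde{N}(dz,dt)=N(dz,dt)-\nu(dz)\,dt$ and treat the two contributions $I_3=I_3^{N}-I_3^{\nu}$ separately. The compensator contribution $I_3^{\nu}$ is an ordinary Lebesgue integral in $t$, so as $\delta_0\downarrow 0$ the mollifier $\rho_{\delta_0}(t-s)$ collapses $t=s$ (jump times being Lebesgue--null, the pre- and post-jump values of $v_\eps$ coincide almost everywhere); sending $l\downarrow 0$ then sets $k=v_\eps(s,y)$, and after the sign change $-I_3^{\nu}$ this produces the two terms $\beta(u-v_\eps)-\beta(u+\eta(u;z)-v_\eps)$.

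The jump contribution $I_3^{N}$ is where the interaction enters. On the support $t<s$, as $\delta_0\downarrow 0$ one has $v_\eps(s,y)\to v_\eps(t^+,y)$, that is, the \emph{post}-jump value, which at a jump $(\tau,z)$ of the common driving measure $N$ equals $v_\eps(\tau^-,y)+\sigma_\eps(v_\eps(\tau^-,y);z)$. Rewriting the integrand through the predictable pre-jump data $u(t^-,x),\,v_\eps(t^-,y)$ and the jump amplitude $z$ then makes it predictable, so the L\'{e}vy--It\^{o} compensation formula converts the $N$--integral into a $\nu(dz)\,dt$ integral in which $v_\eps$ is everywhere replaced by $v_\eps+\sigma_\eps(v_\eps;z)$. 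Letting $l\downarrow 0$, which now sets $k=v_\eps+\sigma_\eps(v_\eps;z)$, yields the two terms $\beta(u+\eta(u;z)-v_\eps-\sigma_\eps(v_\eps;z))-\beta(u-v_\eps-\sigma_\eps(v_\eps;z))$. Adding the four terms gives the asserted limit.

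The main obstacle, and the whole point of the lemma, is precisely the non-predictability of the integrand in $I_3$: one must resist discarding it as a martingale and instead account honestly for the fact that $v_\eps(s,y)$ has already absorbed the jump $\sigma_\eps$ at the jump time of $N$, which is exactly what the choice $\supp(\rho)\subset[-1,0)$ arranges. Technically, the delicate points are justifying the compensation step for the anticipative $N$--integral by re-expressing everything through pre-jump predictable data, and controlling the iterated limit interchanges ($\delta_0\downarrow 0$ then $l\downarrow 0$) by dominated convergence, using the uniform $L^p$ moment bounds of Proposition~\ref{prop:vanishing viscosity-solution} together with assumptions \ref{A3}--\ref{A4'} to dominate the integrands uniformly in the parameters.
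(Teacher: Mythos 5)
Your treatment of $J_3$ is correct and is exactly the standard argument (the paper itself does not write the proof out, deferring to \cite{kbm2014}, but this is the argument there): on the support of $\rho_{\delta_0}(t-s)$ one has $t<s$, so $\varsigma_l(u(t,x)-k)$ is $\mathcal{F}_{s^-}$-measurable, the integrand of the $\tilde{N}(dz,ds)$-integral is predictable, and the expectation vanishes. Your diagnosis of $I_3$ is also right in spirit: the factor $\varsigma_l(v_\eps(s,y)-k)$ anticipates the jump of the driving measure at time $t$, this is precisely the noise--noise interaction, and your bookkeeping (post-jump value $v_\eps(t^-,y)+\sigma_\eps(v_\eps(t^-,y);z)$ from the jump part, pre-jump value from the compensator part) does land on the correct four-term expression.

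However, the central step of your $I_3$ argument --- splitting $\tilde{N}(dz,dt)=N(dz,dt)-\nu(dz)\,dt$ and treating $I_3=I_3^{N}-I_3^{\nu}$ separately --- is not legitimate under the standing hypotheses. The integrand of $I_3$ is of size $\big|\beta(u+\eta(u;z)-k)-\beta(u-k)\big|\le \|\beta'\|_\infty\,|\eta(u;z)|\le C(1+|u|)(|z|\wedge 1)$, i.e.\ only first order in $z$ near the origin, while \ref{A4'} only guarantees $\int(1\wedge|z|^2)\,\nu(dz)<\infty$. For an infinite-activity L\'evy measure (e.g.\ $\nu(dz)=|z|^{-1-\alpha}dz$ with $\alpha\in[1,2)$) the two pieces $I_3^{N}$ and $I_3^{\nu}$ are each divergent; only their compensated combination is defined, and $N$ itself is not a countable sum of isolated jumps that you can evaluate term by term. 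The rigorous route (the one followed in \cite{kbm2014} and in the Brownian antecedents) avoids the split: one writes $\varsigma_l(v_\eps(s,y)-k)=\varsigma_l(v_\eps(s-\delta_0,y)-k)+\int_{s-\delta_0}^{s}d\big[\varsigma_l(v_\eps(r,y)-k)\big]$ by the It\^o--L\'evy formula, kills the first term against the stochastic integral over $[s-\delta_0,s)$ by predictability, and computes the expectation of the product of the two compensated Poisson integrals over the common interval by the It\^o isometry, which produces directly a $\nu(dz)\,dr$ integral of the product of increments
\begin{align*}
\Big(\varsigma_l\big(v_\eps(r^-,y)+\sigma_\eps(v_\eps(r^-,y);z)-k\big)-\varsigma_l\big(v_\eps(r^-,y)-k\big)\Big)\Big(\beta\big(u(r^-,x)+\eta(u(r^-,x);z)-k\big)-\beta\big(u(r^-,x)-k\big)\Big),
\end{align*}
a quantity that is $O\big((|z|\wedge1)^2\big)$ and hence $\nu$-integrable; sending $\delta_0\to0$ and then $l\to0$ gives the stated four-term limit. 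Your argument can be salvaged by first truncating to $|z|>\kappa$, where your jump-by-jump evaluation and compensation are valid, and controlling the $|z|\le\kappa$ remainder through the compensated integral and the quadratic bound above before letting $\kappa\to0$; but as written the decomposition is the step that fails.
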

To proceed further, we combine Lemma~\ref{stochastic_lemma_3} and 
Lemma~\ref{stochastic_lemma_6} and conclude that
\begin{align}
& \lim_{l\goto 0}\lim_{\delta_0 \goto 0} 
\Big((I_3 +J_3)+ (I_4 + J_4)\Big) \notag \\
= &E \Big[\int_{\Pi_T}\int_{\R_x^d}\Big(\int_{|z|>0}\Big\{\beta \big(u(t,x)
-v_\eps(t,y)+\eta(u(t,x);z)-\sigma_\eps(v_\eps(t,y);z)\big)\notag \\
& \hspace{5cm} -\beta \big(u(t,x)-v_\eps(t,y)\big)-\big(\eta(u(t,x);z)-\sigma_\eps(v_\eps(t,y);z)\big) \notag \\ 
&\hspace{5.5cm} \times \beta^\prime \big(u(t,x)-v_\eps(t,y)\big)\Big\}\,\nu(dz)\Big) \psi(t,y)\varrho_\delta(x-y)\,dx\,dy\,dt\Big] \notag \\
=& E\Big[ \int_{r=0}^T\int_{|z| > 0} \int_{\R_y^d}\int_{ \R_x^d} \int_{\rho=0}^1
 \beta^{\prime \prime}\Big( u(r,x) -v_\eps(r,y) +\rho\big(\eta(u(r,x);z)-\sigma_\eps(v_\eps(r,y);z)\big)\Big)\notag\\
&\hspace{6.6cm}\times (1-\rho) \big| \eta(u(r,x);z)-\sigma_\eps(v_\eps(r,y);z)\big|^2  \psi(r,y) \notag \\
& \hspace{9cm} \times \varrho_\delta(x-y)\,d\rho\,dx \,dy\,\nu(dz) \,dr\Big]\label{eq:extra term}
\end{align}

We are now in a position to add \eqref{stochas_entropy_1-levy} and \eqref{stochas_entropy_3-levy}
and pass to the limits $\underset{l\rightarrow 0}\lim\, \underset{\delta_0\downarrow 0} \lim$. In what follows, 
invoking Lemma~\ref{stochastic_lemma_1}, Lemma~\ref{stochastic_lemma_2}, Lemma~\ref{stochastic_lemma_4}, and Lemma \ref{stochastic_lemma_5}, and the expressions \eqref{esti:J_7-eps} and \eqref{eq:extra term}, 
we arrive at
\begin{align}
 0\le &  E \Big[\int_{\R_y^d}\int_{\R_x^d}
  \beta(u(0,x)-v_\eps(0,y))\psi(0,y)\varrho_{\delta} (x-y)\,dx\,dy\Big] \notag \\
&\hspace{5cm} +   E \Big[\int_{\Pi_T}\int_{\R_y^d} \beta(v_\eps(s,y)-u(s,x)) \partial_s\psi(s,y)\varrho_\delta(x-y)\,dy\,dx\,ds\Big]\notag \\
   & \qquad \, - E \Big[\int_{\Pi_T}\int_{\R_y^d} \nabla_y \cdot \{ G^\beta\big(v_\eps(s,y), u(s,x))-F^\beta\big(u(s,x),v_\eps(s,y)\big)\}
  \psi(s,y)\varrho_\delta(x-y)\,dy\,dx\,ds\Big] \notag \\
  & \quad +  E \Big[\int_{\Pi_T}\int_{\R_y^d} F^\beta\big(u(s,x),v_\eps(s,y)\big)\cdot  \grad_y\psi(s,y)\,\varrho_\delta(x-y)\,dy\,dx\,ds\Big] + C \Big(E\big[ |v_0|_{BV(\R^d)}\big] +1 \Big)\frac{\eps}{\delta} \notag \\
  + &  E\Big[ \int_{r=0}^T\int_{|z| > 0} \int_{\R_y^d}\int_{ \R_x^d} \int_{\rho=0}^1
 \beta^{\prime \prime}\Big( u(r,x) -v_\eps(r,y) +\rho\big(\eta(u(r,x);z)-\sigma_\eps(v_\eps(r,y);z)\big)\Big)\notag\\
&\hspace{6.6cm}\times (1-\rho) \big| \eta(u(r,x);z)-\sigma_\eps(v_\eps(r,y);z)\big|^2  \psi(r,y) \notag \\
& \hspace{9.6cm} \times \varrho_\delta(x-y)\,d\rho\,dx \,dy\,\nu(dz) \,dr\Big]\notag \\
&:=\mathcal{A}_1 + \mathcal{A}_2 + \mathcal{A}_3 +\mathcal{A}_4 +\mathcal{A}_5 + C \Big(E\big[ |v_0|_{BV(\R^d)}\big] +1 \Big)\frac{\eps}{\delta} .\label{estimate:A_0}
\end{align}
Again, our aim is to estimate all the above terms suitably. First observe that, since $\beta_\xi(r)\le |r|$, we obtain
 \begin{align}
  |\mathcal{A}_1| \le  E \Big[\int_{\R_y^d}\int_{ \R_x^d}\big| v_\eps(0,y) -u(0,x)\big| \psi(0,y)\,\varrho_\delta(x-y)
  \,dx\, dy\Big]. \label{estimate:A_11}
 \end{align}
 Next, by our choice of $\beta=\beta_\xi$, we have 
 \begin{align}
  \Big| \frac{\partial}{\partial v}\Big(F^{\beta_\xi}(u,v)-F^{\beta_\xi}(v,u)\Big)\Big|
  &=\Big|- F^{\prime}(v)\beta_\xi^{\prime}(v-u) - F^{\prime}(v)\beta_\xi^{\prime}(0) + 
  \int_{s=u}^v \beta_\xi^{\prime\prime}(s-v) F^\prime(s)\,ds\Big| \notag  \\
  &=\Big| \big(F^{\prime}(v)-F^{\prime}(u) \big)\beta_\xi^{\prime}(u-v) -
  \int_{s=u}^v \beta_\xi^{\prime}(s-v) F^{\prime\prime}(s)\,ds\Big| \notag \\
  & = \Big| \int_{u}^v \Big(\beta_\xi^{\prime}(u-v) -\beta_\xi^{\prime}(s-v) \Big) F^{\prime\prime}(s)\,ds \Big|
   \le M_2\,\xi\, ||F^{\prime\prime}||_{\infty}.\label{estimate:derivative-entropy-flux}
 \end{align} 
 Also from the definition of $F^{\beta}$ and $G^{\beta}$, it is evident that
 \begin{align}
  \Big| \frac{\partial}{\partial v}\Big(F^\beta(v,u)-G^\beta(v,u)\Big)\Big|\le |F^\prime(v)-G^\prime(v)|
  \label{estimate:derivative-different-entropy-flux}
 \end{align} Therefore, by \eqref{estimate:derivative-entropy-flux} and \eqref{estimate:derivative-different-entropy-flux}, we obtain
 \begin{align}
  \Big| \frac{\partial}{\partial v}\Big(F^\beta(u,v)-G^\beta(v,u)\Big)\Big|\le  M_2\,\xi\, ||F^{\prime\prime}||_{\infty}
  + |F^\prime(v)-G^\prime(v)| \label{estimate:derivative-different-entropy-flux-1}
 \end{align}
Keeping in mind the estimate \eqref{estimate:derivative-different-entropy-flux-1}, we proceed further by rewriting the term $\mathcal{A}_3$ as 
\begin{align*}
 \mathcal{A}_3= E \Big[\int_{\Pi_T}\int_{\R_y^d} \nabla_y v_\eps(s,y) \cdot \partial_v \big(
 F^\beta(u,v)-G^\beta(v,u)\big)\Big|_{(u,v)=(u(s,x),v_\eps(s,y))}
  \psi(s,y)\varrho_\delta(x-y)\,dy\,dx\,ds\Big] 
\end{align*}
Thanks to the uniform spatial $BV$ estimate for vanishing viscosity solution (cf. Theorem~\ref{thm: spatial BV estimate}), we conclude that
 \begin{align}
  |\mathcal{A}_3|  & \le \Big( M_2\,\xi\, ||F^{\prime\prime}||_{\infty}
  + ||F^\prime-G^\prime||_{\infty}\Big) E \Big[\int_{s=0}^T \int_{\R_y^d}\int_{\R_x^d} |\grad_y v_\eps(s,y)|
  \psi(s,y) \varrho_\delta(x-y)\,dx\,dy\,ds\Big] \notag \\
  &\le  E\Big[|v_0|_{BV(\R^d)}\Big] \Big( M_2\,\xi\, ||F^{\prime\prime}||_{\infty}
  + ||F^\prime-G^\prime||_{\infty}\Big) \int_{s=0}^T ||\psi(s,\cdot)||_{L^\infty(\R^d)}\,ds.\label{estimate:A_31}
 \end{align}
 Next, we recall that the function $\psi(t,x)$ satisfies $|\grad \psi(t,x)| \le C\,\psi(t,x)$ and $|F^{\beta}(a,b)| \le ||F^\prime||_{\infty} |a-b|$ for any $a,b \in \R$.
Therefore, we conclude
 \begin{align}
  |\mathcal{A}_4| \le & C ||F^\prime||_{L^\infty} E \Big[\int_{s=0}^T \int_{\R_y^d}\int_{\R_x^d} \big|u(s,x)-v_\eps(s,y)\big|
  \psi(s,y) \varrho_\delta(x-y)\,dx\,dy\,ds\Big] \notag \\
  \le & C ||F^\prime||_{L^\infty} E \Big[\int_{s=0}^T \int_{\R_y^d}\int_{\R_x^d}\beta_{\xi} \big(u(s,x)-v_\eps(s,y)\big)
  \psi(s,y) \varrho_\delta(x-y)\,dx\,dy\,ds\Big] \notag \\
  & \qquad \qquad \qquad \qquad \qquad+ C M_1\,||F^\prime||_{L^\infty}\,\xi \int_{s=0}^T ||\psi(s,\cdot)||_{L^{\infty}(\R^d)}\,ds.\label{estimate:A_41}
 \end{align} 
Let us focus on  the term $\mathcal{A}_5$. For this, let us define
 \begin{align*}
  a:=u(r,x)-v_\eps(r,y),\quad  \text{and} \quad
 b:=\eta(u(r,x);z)-\sigma_\eps(v_\eps(r,y);z).
 \end{align*} 
 Then $\mathcal{A}_5$ can be rewritten in the following simplified form
  \begin{align}
   \mathcal{A}_5&= E\Big[ \int_{r=0}^T\int_{|z| > 0} \int_{\R_y^d}\int_{ \R_x^d} \int_{\rho=0}^1
 (1-\rho) b^2  \beta^{\prime \prime}\big(a +\rho\,b\big) \psi(r,y)\,\varrho_\delta(x-y)\,d\rho\,dx \,dy\,\nu(dz) \,dr\Big] \notag \\
 & \le C  E\Big[ \int_{r=0}^T\int_{|z| > 0} \int_{\R_y^d}\int_{ \R_x^d} \int_{\rho=0}^1 \big| \eta(u(r,x);z)-\sigma(u(r,x);z)\big|^2
 \beta^{\prime \prime}\big(a +\rho\,b\big) \notag \\
 & \hspace{8cm} \times \psi(r,y)\,\varrho_\delta(x-y)\,d\rho\,dx \,dy\,\nu(dz) \,dr\Big]  \notag \\
 & \qquad  + C  E\Big[ \int_{r=0}^T\int_{|z| > 0} \int_{\R_y^d}\int_{ \R_x^d} \int_{\rho=0}^1 \big| \sigma(u(r,x);z)-\sigma (v_\eps(r,y);z)\big|^2
 \beta^{\prime \prime}\big(a +\rho\,b\big)  \notag \\
 & \hspace{8cm} \times \psi(r,y)\,\varrho_\delta(x-y)\,d\rho\,dx \,dy\,\nu(dz) \,dr\Big]  \notag \\
 & \qquad  + C  E\Big[ \int_{r=0}^T\int_{|z| > 0} \int_{\R_y^d}\int_{ \R_x^d} \int_{\rho=0}^1 \big| \sigma (v_\eps(r,y);z)-\sigma_{\eps}(v_\eps(r,y);z)\big|^2
 \beta^{\prime \prime}\big(a +\rho\,b\big)  \notag \\
 & \hspace{8cm} \times \psi(r,y)\,\varrho_\delta(x-y)\,d\rho\,dx \,dy\,\nu(dz) \,dr\Big]  \notag \\
 & := \mathcal{A}_5^1 + \mathcal{A}_5^2  + \mathcal{A}_5^3 .
 \label{estimate:A_5^0}
  \end{align}

 To this end we recall that $\mathcal{D}(\eta, \sigma) =\displaystyle{\sup_{u \in \R}}\int_{|z|> 0} \frac{|\eta(u,z)-\sigma(u,z)|^2}{1+|u|^2}\nu(\,dz) $, which is well-defined in view of \ref{A3}.  With this quantity at hand it is easy see that
 
 \begin{align}
  \mathcal{A}_5^1 &  \le  \frac{C \mathcal{D}(\eta, \sigma)}{\xi} E \Big[\int_{r=0}^T\int_{ \R_x^d}\int_{ \R_y^d}
 (1+|u(r,x)|^2) \psi(r,y)\rho_\delta(x-y)\,dy \,dx \,dr\Big] \notag\\
&  \le   \frac{C \mathcal{D}(\eta, \sigma)}{\xi} \Big(\int_0^T ||\psi(s, \cdot)||_{L^1}\, ds +\int_0^T ||\psi(r,\cdot)||_{\infty} \,dr\Big)\label{estimate:A_5^1}
\end{align}

Next, we move on to estimate the term $\mathcal{A}_5^2$. Observe that
\begin{align}
 \big|\sigma(u(r,x);z)-\sigma (v_\eps(r,y);z)\big|^2 \beta^{\prime \prime}(a+\rho\,b)
 &\le  \big|u(r,x)-v_\eps(r,y)\big|^2 (1\wedge |z|^2) \beta^{\prime \prime}(a+\rho\,b) \notag \\
 &=  (1\wedge |z|^2) \, a^2\,\beta^{\prime \prime}(a+\rho\,b). \label{eq:nonlocal}
\end{align} 
Therefore, it is required to find a suitable upper bound on $a^2\,\beta^{\prime \prime}(a+\rho\,b)$.
Since $\beta^{\prime \prime}$ is non-negative and symmetric around zero, without loss of generality, we may assume that
$ a >0$.  Then, by our assumption \ref{A3}, we conclude that
\begin{align*}
  \big| &\eta(u(r,x);z)  -\sigma_\eps(v_\eps(r,y);z)\big| \\
 & \le   \big|\eta(u(r,x);z)-\sigma(u(r,x);z)\big| + \big|\sigma(u(r,x);z)-\sigma(v_\eps(r,y);z)\big| + \big|\sigma(v_\eps(r,y);z)-\sigma_{\eps}(v_\eps(r,y);z)\big|\notag \\
   & \le  \big|\eta(u(r,x);z)-\sigma(u(r,x);z)\big| + \lambda^* a + C \eps (1 + \abs{v_{\eps}}),\notag 
 \end{align*}which implies that
 \begin{align}
  a + \rho\,b \ge   - \big|\eta(u(r,x);z)-\sigma(u(r,x);z)\big| - C \eps (1 + \abs{v_{\eps}}) + (1- \lambda^*) a,\notag 
 \end{align} for $\rho \in[0,1]$. In other words
 \begin{align}
  0\le a \le (1-\lambda^*)^{-1} \Big\{ a +\rho\,b + \big|\eta(u(r,x);z)-\sigma(u(r,x);z)\big|  + C \eps (1 + \abs{v_{\eps}})\Big\}. \label{eq:nonlocal-1}
 \end{align}
 Now, we shall make use of \eqref{eq:nonlocal-1} in \eqref{eq:nonlocal}, to obtain
 \begin{align*}
  &\big|\sigma(u(r,x);z)-\sigma(v_\eps(r,y);z)\big|^2 \beta_\xi^{\prime \prime}(a+\rho\,b) \notag \\
  & \qquad \le (1-\lambda^*)^{-2} \Big\{ (a +\rho\,b)^2  + C\,\big|\eta(u(r,x);z)-\sigma(u(r,x);z)\big|^2 + C \eps^2 \left(1 + \abs{v_{\eps}}^2 \right) \Big\} 
  (1\wedge |z|^2)  \beta_\xi^{\prime \prime}(a+\rho\,b) \notag \\
  & \qquad  \le   C\Big( \xi +\frac{ \big|\eta(u(r,x);z)-\sigma(u(r,x);z)\big|^2}{\xi} + \frac{\eps^2 \left(1 + \abs{v_{\eps}}^2 \right)}{\xi}\Big) (1\wedge |z|^2).
 \end{align*}
 This helps us to conclude
\begin{align}
 \abs{\mathcal{A}_5^2}&
  \le  C E\Bigg[ \int_{r}\int_{|z| > 0} \int_{\R_y^d}\int_{ \R_x^d} 
\Big( \xi  + \frac{\eps^2 \left(1 + \abs{v_{\eps}}^2 \right)}{\xi}\Big)(1\wedge |z|^2)
  \psi(r,y)\,\varrho_\delta(x-y)\,dx \,dy\,m(dz) \,dr\Bigg]\notag \\
  &\qquad+ \frac{\mathcal{D}(\eta, \sigma)}{ \xi} \int_0^T\int_{\R_x^d}\int_{\R_y^d}(1+|u(r,x)|^2)\psi(r,y)\rho_{\delta}(x-y)\,dx\,dy\,dr\notag\\
& \le C (\xi +\frac{\eps^2}{\xi} ) \int_{s=0}^T || \psi(s,\cdot)||_{L^\infty(\R^d)}\,ds + 
  \frac{C \mathcal{D}(\eta, \sigma)}{\xi} \Big(\int_0^T ||\psi(s, \cdot)||_{L^1}\, ds + \int_0^T  ||\psi(r,\cdot)||_{\infty}\,dr\Big) .\label{estimate:A_5^2}
\end{align}
Next, we move on to estimate the term $\mathcal{A}_5^3$. In fact, it follows that 
\begin{align}
 &E\Big[ \int_{r=0}^T\int_{|z| > 0} \int_{\R_y^d}\int_{ \R_x^d} \int_{\rho=0}^1 \big| \sigma (v_\eps(r,y);z)-\sigma_{\eps}(v_\eps(r,y);z)\big|^2
 \beta^{\prime \prime}\big(a +\rho\,b\big)  \notag \\
 & \hspace{8cm} \times \psi(r,y)\,\varrho_\delta(x-y)\,d\rho\,dx \,dy\,\nu(dz) \,dr\Big] \notag\\
 & \qquad \quad \le C E\Big[ \int_{r=0}^T \int_{|z| > 0} \int_{\R_y^d}\int_{ \R_x^d} 
 \frac{\eps^2 \left(1 + \abs{v_{\eps}}^2 \right)}{\xi}\, (1\wedge |z|^2)
  \psi(r,y)\,\varrho_\delta(x-y)\,dx \,dy\,\nu(dz) \,dr\Big]\notag \\
&\qquad \quad  \le C \frac{\eps^2}{\xi}  \int_{s=0}^T || \psi(s,\cdot)||_{L^\infty(\R^d)}\,ds.
 \label{estimate:A_5^3}
\end{align}

We now make use of the estimates \eqref{estimate:A_5^1}, \eqref{estimate:A_5^2} and \eqref{estimate:A_5^3}. Then it is evident from \eqref{estimate:A_5^0} that
\begin{align}
 |\mathcal{A}_5| & \le   \frac{C \mathcal{D}(\eta, \sigma)}{\xi} \Big(\int_0^T ||\psi(s, \cdot)||_{L^1}\, ds +\int_0^T  ||\psi()||_{\infty} \,dr\Big)  \notag \\
   & \hspace{5cm} + C (\xi + \frac{\eps^2}{\xi} )\int_{s=0}^T || \psi(s,\cdot)||_{L^\infty(\R^d)}\,ds. \label{estimate:A_5}
\end{align}
Finally, we make use of the estimates \eqref{estimate:A_11}, \eqref{estimate:A_31}, \eqref{estimate:A_41} and \eqref{estimate:A_5} in \eqref{estimate:A_0} and 
pass to the limit as $\eps \goto 0 $ (keeping $\delta$ and $\xi$ fixed) in the resulting expression to conclude that
\begin{align}
0\le  & E \Big[\int_{\R_y^d}\int_{ \R_x^d}\big| v_0(y) -u(0,x)\big| \psi(0,y)\,\varrho_\delta(x-y)\,dx\, dy\Big] \notag \\
& +  E\Big[|v_0|_{BV(\R^d)}\Big] \Big( M_2\,\xi\, ||F^{\prime\prime}||_{\infty}
  + ||F^\prime-G^\prime||_{\infty}\Big) \int_{s=0}^T ||\psi(s,\cdot)||_{L^\infty(\R^d)}\,ds \notag \\
& + C ||F^\prime||_{L^\infty} E \Big[\int_{s=0}^T \int_{\R_y^d}\int_{\R_x^d}\beta_{\xi} \big(u(s,x)-v(s,y)\big)
  \psi(s,y) \varrho_\delta(x-y)\,dx\,dy\,ds\Big] \notag \\
&  \quad+ C \big(M_1\,||F^\prime||_{L^\infty} +1 \big)\,\xi \int_{s=0}^T ||\psi(s,\cdot)||_{L^{\infty}(\R^d)}\,ds \notag \\
&  \quad +\frac{C \mathcal{D}(\eta, \sigma)}{\xi} \Big(\int_0^T ||\psi(s, \cdot)||_{L^1}\, ds +\int_0^T  ||\psi(r,\cdot)||_{\infty} \,dr\Big)  \notag \\
& \qquad \quad +   E \Big[\int_{\Pi_T}\int_{\R_y^d} \beta(v(s,y)-u(s,x)) \partial_s\psi(s,y)\varrho_\delta(x-y)\,dy\,dx\,ds\Big].\label{eq:stoc-1}
\end{align}
 Now we can safely pass the limit as $\delta \goto 0$ in \eqref{eq:stoc-1} to obtain
\begin{align}
0\le  & E \Big[\int_{ \R_x^d}\big| v_0(x) -u(0,x)\big| \psi(0,x)\,dx\Big] \notag \\
& +  E\Big[|u_0|_{BV(\R^d)}\Big] \Big( M_2\,\xi\, ||F^{\prime\prime}||_{\infty}
  + ||F^\prime-G^\prime||_{\infty}\Big) \int_{s=0}^T ||\psi(s,\cdot)||_{L^\infty(\R^d)}\,ds \notag \\
& \quad + C ||F^\prime||_{L^\infty} E \Big[\int_{s=0}^T \int_{\R_x^d}\beta_{\xi} \big(v(s,x)-u(s,x)\big)
  \psi(s,x)\,dx\,ds\Big] \notag \\
&  \qquad + C \big(M_1\,||F^\prime||_{L^\infty} +1 \big)\,\xi \int_{s=0}^T ||\psi(s,\cdot)||_{L^{\infty}(\R^d)}\,ds \notag \\
&  \quad\quad +\frac{C \mathcal{D}(\eta, \sigma)}{\xi} \Big(\int_0^T ||\psi(s, \cdot)||_{L^1}\, ds + \int_0^T  ||\psi(r,\cdot)||_{\infty} \,dr\Big)  \notag \\
& \qquad \qquad +   E \Big[\int_{\Pi_T} \beta_{\xi}(u(s,x)-v(s,x)) \partial_s\psi(s,x)\,dx\,ds\Big].\label{eq:stoc-2}
\end{align}
 To proceed further, we make a special choice for the function $\psi(t,x)$. To this end, for each $h>0$ and fixed $t\ge 0$, we define
  \begin{align}
\psi_h^t(s)=\begin{cases} 1, &\quad \text{if}~ s\le t, \notag \\
                          1-\frac{s-t}{h}, &\quad \text{if}~~t\le s\le t+h,\notag \\
                          0, & \quad \text{if} ~ s \ge t+h.
            \end{cases}
\end{align}
Furthermore, let $\phi \in C_c^2(\R^d)$ be a cut-off function such that $|\grad \phi(x)| \le C \phi(x),~~ |\Delta \phi(x)|\le C \phi(x)$.
Clearly, \eqref{eq:stoc-2} holds with $\psi(s,x)=\psi_h^t(s)\phi(x)$. Let $\mathbb{T}$ be the set all points $t$ in
$[0, \infty)$ such that $t$ is  right Lebesgue point of 
$$A(s)= E\Big[\int_{\R_x^d} \beta_\xi\big(v(s,x)-u(s,x)\big)\,\phi(x)\,dx\Big].$$
Clearly, $\mathbb{T}^{\complement}$(complement of $\mathbb{T}$) has zero Lebesgue measure. Fix  $t\in \mathbb{T}$.
Then from \eqref{eq:stoc-2}, keeping in mind that we used generic $\beta$ for the function $\beta_{\xi}$, we obtain
\begin{align}
0\le  & E \Big[\int_{ \R_x^d}\big| v_0(x) -u(0,x)\big| \psi(0,x)\,dx\Big] \notag \\
& +  E\Big[|v_0|_{BV(\R^d)}\Big] \Big( M_2\,\xi\, ||F^{\prime\prime}||_{\infty}
  + ||F^\prime-G^\prime||_{\infty}\Big) ||\phi(\cdot)||_{L^\infty(\R^d)}\int_{s=0}^T\psi_h^t(s) \,ds \notag \\
& \quad + C ||F^\prime||_{L^\infty} E \Big[\int_{s=0}^T \int_{\R_x^d}\beta_{\xi} \big(v(s,x)-u(s,x)\big)
  \psi_h^t(s)\phi(x)\,dx\,ds\Big] \notag \\
&  \qquad + C \big(M_1\,||F^\prime||_{L^\infty} +1 \big)\,\xi  ||\phi(\cdot)||_{L^{\infty}(\R^d)}\int_{s=0}^T \psi_h^t(s)\,ds \notag \\
&  \quad\quad +\frac{C \mathcal{D}(\eta, \sigma)}{\xi} \Big(\int_0^T \int_{\R^d}\phi(x)\psi_h^t(s)\,dx\, ds +  \int_0^T\psi_h^t(s) ||\phi||_\infty \,dr\Big)  \notag \\
&  \qquad \qquad -\frac{1}{h}  \int_{s=t}^{t+h}  E\Big[ \int_{\R_x^d} \beta_\xi \big(u(s,x)-v(s,x)\big)\phi(x)\,dx \Big]\,ds .\label{eq:stoc-3}
\end{align} 
Since $t$ is a right Lebesgue point of $A(s)$, letting $h\goto 0$ in \eqref{eq:stoc-3} yields
\begin{align}
 E\Big[ \int_{\R_x^d} &\beta_\xi \big(u(t,x)-v(t,x)\big)\phi(x)\,dx \Big]\notag \\
\le  & E \Big[\int_{ \R_x^d}\big| v_0(x) -u(0,x)\big| \phi(x)\,dx\Big] 
+ C \big(M_1\,||F^\prime||_{L^\infty} +1 \big)\,\xi  ||\phi(\cdot)||_{L^{\infty}(\R^d)}\,t \notag \\
& +  E\Big[|v_0|_{BV(\R^d)}\Big] \Big( M_2\,\xi\, ||F^{\prime\prime}||_{\infty}
  + ||F^\prime-G^\prime||_{\infty}\Big) ||\phi(\cdot)||_{L^\infty(\R^d)}\,t  \notag \\
& \quad + C ||F^\prime||_{L^\infty}  \int_{s=0}^t E \Big[ \int_{\R_x^d}\beta_{\xi} \big(v(s,x)-u(s,x)\big)
\phi(x)\,dx\Big]\,ds \notag \\ 
&  \qquad  +\frac{Ct \mathcal{D}(\eta, \sigma)}{\xi} \Big(||\phi||_{L^1}+||\phi||_{L^\infty}\Big)  \notag
\end{align} for almost every $t> 0$. An weaker version of Grownwall's inequality then yields
\begin{align}
 E\Big[ \int_{\R_x^d} \beta_\xi \big(u(t,x)-v(t,x)\big) &\phi(x)\,dx \Big] 
 \le e^{C \,t\,||F^\prime||_{\infty}} E \Big[\int_{ \R_x^d}\big| v_0(x) -u(0,x)\big| \phi(x)\,dx\Big] \notag \\
 & + C  e^{C ||F^\prime||_{\infty}\,t} \Bigg\{ \big(M_1\,||F^\prime||_{L^\infty} +1 \big)\,\xi  ||\phi(\cdot)||_{L^{\infty}(\R^d)}\,t \notag \\
& +  E\Big[|v_0|_{BV(\R^d)}\Big] \Big( M_2\,\xi\, ||F^{\prime\prime}||_{\infty}
  + ||F^\prime-G^\prime||_{\infty}\Big) ||\phi(\cdot)||_{L^\infty(\R^d)}\,t  \notag \\
&  +\frac{Ct \mathcal{D}(\eta, \sigma)}{\xi} \Big(||\phi||_{L^1}+||\phi||_{L^\infty}\Big)  \Bigg\}\label{eq:stoc-4}
\end{align} for almost every $t >0$. Next, we recall that $|r| \le \beta_{\xi}(r) + M_1\, \xi$, for any $r \in \R$. Using this inequality, \eqref{eq:stoc-4} reduces to 
 \begin{align}
 E\Big[ \int_{\R_x^d} \beta_\xi \big(u(t,x)-v(t,x)\big) &\phi(x)\,dx \Big] 
 \le e^{C \,t\,||F^\prime||_{\infty}} E \Big[\int_{ \R_x^d}\big| v_0(x) -u(0,x)\big| \phi(x)\,dx\Big] + M_1\,\xi ||\phi(\cdot)||_{L^1(\R^d)} \notag \\
 & + C  e^{C ||F^\prime||_{\infty}\,t} \Bigg\{ \big(M_1\,||F^\prime||_{L^\infty} +1 \big)\,\xi  ||\phi(\cdot)||_{L^{\infty}(\R^d)}\,t \notag \\
& +  E\Big[|v_0|_{BV(\R^d)}\Big] \Big( M_2\,\xi\, ||F^{\prime\prime}||_{\infty}
  + ||F^\prime-G^\prime||_{\infty}\Big) ||\phi(\cdot)||_{L^\infty(\R^d)}\,t  \notag \\
&  +\frac{Ct \mathcal{D}(\eta, \sigma)}{\xi} \Big(||\phi||_{L^1}+||\phi||_{L^\infty}\Big)  \Bigg\}\label{eq:stoc-5}
\end{align} We now simply choose $\xi = \sqrt{t \mathcal{D}(\eta, \sigma)}$ and conclude that for a.e $t > 0$
\begin{align}
   E \Big[\int_{\R_x^d}    \big|u(t,x)-v(t,x)\big|\phi(x)dx \Big] \le&  C_T \,E \Big[\int_{\R_x^d}| u_0(x) -v_0(x)| \phi(x)\,dx\Big]+ E \big[|v_0|_{BV(\R^d)}\big]\, ||F^\prime-G^\prime||_{\infty}\, t\, ||\phi(\cdot)||_{L^{\infty}(\R^d)} \Big] \notag \\
  & +  C_T \,\Big[ \big( 1+ E[|v_0|_{BV(\R^d)}]\big) \sqrt{t \mathcal{D}(\eta, \sigma)} ||\phi(\cdot)||_{L^{\infty}(\R^d)} 
  + \sqrt{t \mathcal{D}(\eta, \sigma)}
 ||\phi(\cdot)||_{L^1(\R^d)} \Big], \notag
 \end{align} for some nonnegative constant $C_T$, independent of $|u_0|_{BV(\R^d)}$ and
$|v_0|_{BV(\R^d)}$. This completes the first part of the proof, and second part
follows from this by exploiting the specific structure of the test function $\phi(x)$.

\section{Proof of The Main Corollary}
\label{sec:error_estimate}
It is already known that the vanishing viscosity solutions converge (in an appropriate sense) to the unique 
entropy solution of the stochastic conservation law. However, the nature of such convergence described by a rate of convergence is not available. As a by product of the Main Theorem, we explicitly obtain the rate of convergence of vanishing viscosity solutions to the unique $BV$-entropy solution of the underlying problem \eqref{eq:levy_stochconservation_laws}. 
 
 By similar arguments as in the proof of the Main Theorem (cf. Section~\ref{cont-depen-estimate}), we arrive at
 \begin{align}
0\le  & E \Big[\int_{\R_y^d}\int_{ \R_x^d}\big| u_\eps(0,y) -u_0(x)\big| \psi(0,y)\,\varrho_\delta(x-y)\,dx\, dy\Big] \notag \\
& +  E\Big[|u_0|_{BV(\R^d)}\Big] M_2\,\xi\, ||F^{\prime\prime}||_{\infty} \int_{s=0}^T ||\psi(s,\cdot)||_{L^\infty(\R^d)}\,ds + C \frac{\eps^2}{\xi} \int_{s=0}^T ||\psi(s,\cdot)||_{L^\infty(\R^d)}\,ds  \notag \\
& \quad + C ||F^\prime||_{L^\infty} E \Big[\int_{s=0}^T \int_{\R_y^d}\int_{\R_x^d}\beta_{\xi} \big(u_\eps(s,y)-u(s,x)\big)
  \psi(s,y) \varrho_\delta(x-y)\,dx\,dy\,ds\Big] \notag \\
&  \qquad + C \big(M_1\,||F^\prime||_{L^\infty} +1 \big)\,\xi \int_{s=0}^T ||\psi(s,\cdot)||_{L^{\infty}(\R^d)}\,ds
+ C \Big( 1+ E\big[ |u_0|_{BV(\R^d)}\big] \Big)\frac{\eps}{\delta}\notag \\
& \qquad \quad + E \Big[\int_{\Pi_T}\int_{\R_y^d} \beta_\xi (u_\eps(s,y)-u(s,x)) \partial_s\psi(s,y)\varrho_\delta(x-y)\,dy\,dx\,ds\Big].\label{eq:stoc-6}
\end{align}

Let $\psi(s,y)=\psi_h^t(s)\phi(y)$ where $\psi_h^t(s)$ and $\phi(x)$ are  described previously.
Let $\mathbb{T}$ be the set all points $t$ in $[0, \infty)$ such that $t$ is  right Lebesgue point of 
$$B(s)= E \Big[\int_{\R_y^d}\int_{\R_x^d} \beta_\xi\big(u_\eps(s,y)-u(s,x)\big)\phi(y) \varrho_\delta(x-y)\,dx\,dy\Big].$$
Clearly, $\mathbb{T}^{\complement}$ has zero Lebesgue measure. Fix  $t\in \mathbb{T}$.
Thus, from  \eqref{eq:stoc-6}, we have
 \begin{align}
  & \frac{1}{h}\int_{s=t}^{t+h}  E \Big[\int_{\R_y^d}\int_{\R_x^d} \beta_\xi\big( u_\eps(s,y) -u(s,x)\big)\phi(y)
\varrho_\delta(x-y)\,dx\,dy \Big]\,ds \notag \\
  & \le   C||F^\prime||_{L^\infty} \int_{s=0}^{t+h}   E \Big[\int_{\R_y^d}\int_{\R_x^d}  \phi(y) \beta_\xi\big( u_\eps(s,y)
  -u(s,y)\big) \varrho_\delta(x-y)\psi_h^t(s)\,dx\,dy \Big]\,ds  \notag \\
 & \qquad  +  E \Big[\int_{\R_y^d}\int_{ \R_x^d}\big| u_\eps(0,y) -u_0(x)\big| \phi(y)\,\varrho_\delta(x-y) \,dx\, dy\Big]\notag \\
  & \hspace{1.0cm} + C\,E\big[|u_0|_{BV(\R^d)}\big] M_2\,\xi\, ||F^{\prime\prime}||_{\infty}
   ||\phi(\cdot)||_{L^\infty(\R^d)} \int_{s=0}^T \psi_h^t(s) \,ds + C \frac{\eps^2}{\xi} ||\phi(\cdot)||_{L^\infty(\R^d)}  \int_{s=0}^T \psi_h^t(s) \,ds \notag \\
  & \hspace{2cm}  + C \xi\, ||\phi(\cdot)||_{L^{\infty}(\R^d)} \int_{s=0}^T \psi_h^t(s)\,ds +
  C \Big( 1 + E\big[ |u_0|_{BV(\R^d)}\big]\Big) \frac{\eps}{\delta}. \notag
 \end{align} 
 Taking limit as $h\goto 0$, we have
 \begin{align}
  &  E \Big[\int_{\R_y^d}\int_{\R_x^d} \beta_\xi\big( u_\eps(t,y) -u(t,x)\big)\phi(y)\varrho_\delta(x-y)\,dx\,dy \Big] \notag \\
  & \quad \le   C||F^\prime||_{L^\infty} \int_{s=0}^{t}   E \Big[\int_{\R_y^d}\int_{\R_x^d}  \phi(y) \beta_\xi\big( u_\eps(s,y)
  -u(s,y)\big) \varrho_\delta(x-y)\,dx\,dy \Big]\,ds  \notag \\
 & \qquad + E \Big[\int_{\R_y^d}\int_{ \R_x^d}\big| u_\eps(0,y) -u_0(x)\big| \phi(y)\,\varrho_\delta(x-y) \,dx\, dy\Big] 
  + C \Big( 1+ E\big[ |u_0|_{BV(\R^d)}\big]\Big) \frac{\eps}{\delta} \notag \\
  & \qquad \quad + C \Big( 1 + E\big[|u_0|_{BV(\R^d)}\big]\Big) \xi\,||\phi(\cdot)||_{L^\infty(\R^d)} \,t +C \frac{\eps^2}{\xi} ||\phi(\cdot)||_{L^\infty(\R^d)}\, t\, \notag
 \end{align} 
 By an weaker version of Gronwall's inequality, for a.e $t > 0$
 \begin{align}
  &  E \Big[\int_{\R_y^d}\int_{\R_x^d} \beta_\xi\big( u_\eps(t,y) -u(t,x)\big)\phi(y)\varrho_\delta(x-y)\,dx\,dy \Big] \notag \\
  & \le   e^{C||F^\prime||_{L^\infty}\,t} \Big\{  E \Big[\int_{\R_y^d}\int_{ \R_x^d}\big| u_\eps(0,y) -u_0(x)\big|
  \phi(y)\,\varrho_\delta(x-y) \,dx\, dy\Big] + C \Big( 1+ E\big[ |u_0|_{BV(\R^d)}\big]\Big) \frac{\eps}{\delta} \Big\} \notag \\
  & \qquad + C e^{C||F^\prime||_{L^\infty}\,t} \Big[\Big( 1 + E\big[|u_0|_{BV(\R^d)}\big]\Big) \xi\,||\phi(\cdot)||_{L^\infty(\R^d)} \,t  + \frac{\eps^2}{\xi} ||\phi(\cdot)||_{L^\infty(\R^d)}\, t \Big] \notag
 \end{align} 
  Since $|r| \le M_1 \xi + \beta_{\xi}(r)$, we have
   \begin{align}
  &  E \Big[\int_{\R_y^d}\int_{\R_x^d} \big | u_\eps(t,y) -u(t,x)\big|\phi(y)\varrho_\delta(x-y)\,dx\,dy \Big] \notag \\
  & \le   e^{C||F^\prime||_{L^\infty}\,t} \Big\{  E \Big[\int_{\R_y^d}\int_{ \R_x^d}\big| u_\eps(0,y) -u_0(x)\big|
  \phi(y)\,\varrho_\delta(x-y) \,dx\, dy\Big] + C \Big( 1+ E\big[ |u_0|_{BV(\R^d)}\big]\Big) \frac{\eps}{\delta} \Big\} \notag \\
  & \qquad + C e^{C||F^\prime||_{L^\infty}\,t} \Big[\Big( 1 + E\big[|u_0|_{BV(\R^d)}\big]\Big) \xi\,||\phi(\cdot)||_{L^\infty(\R^d)} \,t  + \frac{\eps^2}{\xi} ||\phi(\cdot)||_{L^\infty(\R^d)}\, t \Big]
  + C \xi\, ||\phi(\cdot)||_{L^\infty(\R^d)}.\label{eq:stoc-7}
 \end{align} 
 First we send  $\phi$  to $\chi_{\R^d}$ in \eqref{eq:stoc-7}, and then choose $\xi= \eps$. The resulting estimate  gives
   \begin{align}
  &  E \Big[\int_{\R_y^d}\int_{\R_x^d} \big | u_\eps(t,y) -u(t,x)\big|\varrho_\delta(x-y)\,dx\,dy \Big] \notag \\
  & \le   e^{C||F^\prime||_{L^\infty}\,t} \Big\{  E \Big[\int_{\R_y^d}\int_{ \R_x^d}\big| u_\eps(0,y) -u_0(x)\big|
  \,\varrho_\delta(x-y) \,dx\, dy\Big] + C \Big( 1+ E\big[ |u_0|_{BV(\R^d)}\big]\Big) \frac{\eps}{\delta} \Big\} \notag \\
  & \qquad + C e^{C||F^\prime||_{L^\infty}\,t} \Big( 1 + E\big[|u_0|_{BV(\R^d)}\big]\Big) \eps \,t 
  + C \eps.\label{eq:stoc-8}
 \end{align} 
Notice that,
 \begin{align}
  &E \Big[\int_{\R_y^d}  \big|u_\eps(t,y)-u(t,y)\big|\,dy\Big]  \notag \\
  \le &   E \Big[\int_{\R_y^d}\int_{\R_x^d}  \big| u_\eps(t,y) -u(t,x)\big|
\varrho_\delta(x-y)\,dx\,dy \Big]
 + E \Big[\int_{\R_y^d}\int_{\R_x^d}   \big| u(t,x) -u(t,y)\big|\varrho_\delta(x-y)\,dx\,dy \Big]\notag \\
\le & E \Big[ \int_{\R_y^d}\int_{\R_x^d} \big| u_\eps(t,y) -u(t,x)\big|\varrho_\delta(x-y)\,dx\,dy \Big]
  + \delta\, E\Big[|u_0|_{BV(\R^d)}\Big], \label{estimate:solu}
 \end{align} 
and 
 \begin{align}
   E \Big[\int_{\R_y^d}\int_{ \R_x^d} \big| u_\eps(0,y) -u_0(x)\big|\varrho_\delta(x-y) \,dx\, dy\Big]
   \le  E \Big[\int_{\R_x^d} \big| u_\eps(0,x) -u_0(x)\big| \,dx \Big]+ \delta\, E\Big[|u_0|_{BV(\R^d)}\Big] \label{estimate:ini}
 \end{align} 
 We combine  \eqref{estimate:solu} and \eqref{estimate:ini} in \eqref{eq:stoc-8}
  to conclude
   \begin{align}
  &  E \Big[\int_{\R_y^d} \big | u_\eps(t,y) -u(t,y)\big|\,dy \Big] \notag \\
  & \le   e^{C||F^\prime||_{L^\infty}\,t} \Big\{  E \Big[\int_{\R_y^d}\big| u_\eps(0,y) -u_0(y)\big| dy\Big]
  + C \Big( 1+ E\big[ |u_0|_{BV(\R^d)}\big]\Big) \frac{\eps}{\delta} + \delta\, E\big[|u_0|_{BV(\R^d)}\big] \Big\} \notag \\
  & \qquad + C e^{C||F^\prime||_{L^\infty}\,t} \Big( 1 + E\big[|u_0|_{BV(\R^d)}\big]\Big) \eps \,t 
  + C \eps + \delta\, E\big[|u_0|_{BV(\R^d)}\big].\label{eq:stoc-9}
 \end{align} 
We choose $\delta= \eps^{\frac{1}{2}}$ in \eqref{eq:stoc-9}, and conclude that, for a.e $t> 0$,
  \begin{align}
    E \Big[\int_{\R_x^d} \big|u_\eps(t,x) & -u(t,x)\big|\,dx\Big] \notag \\
   \le & C(T) \Big \{ \eps^{\frac{1}{2}} \big( 1 + E[|u_0|_{BV(\R^d)}] \big)(1+ t)
     + E \Big[\int_{\R_x^d} \big|u_\eps(0,x)-u_0(x)\big|\,dx\Big] \Big\}, \notag
  \end{align} 
for some constant $C(T)>0$, independent of $ E\big[|u_0|_{BV(\R^d)}\big]$. This completes the proof.

\section{Fractional BV Estimates}
\label{sec:frac}
In this section, we consider a more general class of stochastic balance laws driven by L\'{evy} noise of the type
 \begin{equation}
 \label{eq:levy_stochconservation_laws_spatial}
 \begin{cases} 
 du(t,x) + \mbox{div}_x F(u(t,x))\,dt
 =\int_{|z|> 0} \eta( x,u(t,x);z) \, \tilde{N}(dz,dt), & \quad x \in \Pi_T, \\
 u(0,x) = u_0(x), & \quad  x\in \R^d,
\end{cases}
\end{equation}
Observe that, the noise coefficient $\eta(x,u;z)$ depends explicitly on the spatial position $x$. 
Moreover, we assume that
$\eta(x,u;z)$ satisfies the following assumptions:
\begin{Assumptions2} 
 \item \label{B3}  There exist  positive constants $K>0$  and  $\lambda^* \in (0,1)$  such that 
 \begin{align*}
 | \eta(x,u;z)-\eta(y,v;z)|  \leq  (\lambda^* |u-v| + K|x-y|)( |z|\wedge 1), ~\text{for all}~  u,v \in \R;~~z\in \R 
 ;~~ x,y \in \R^d.
 \end{align*}
\item \label{B4} There exists  a  non-negative function  $ g(x)\in L^\infty(\R^d)\cap L^2(\R^d)$  such that 
\begin{align*}
|\eta(x,u;z)| \le g(x)(1+|u|)(|z|\wedge 1), ~\text{for all}~ (x,u,z)\in \R^d \times  \R\times \R.  
\end{align*} 
\end{Assumptions2}
Clearly, our continuous dependence estimate is not applicable for problems of type  \eqref{eq:levy_stochconservation_laws_spatial}, and primary reason for that lies in the nonavailability  of $BV$ estimate here.  We refer to \cite[Section $2$]{Chen-karlsen2012} for discussion on this point for diffusion driven balance laws. However, it is possible to obtain a fractional $BV$ estimate. 
To that context, drawing primary motivation from the discussions in \cite{Chen-karlsen2012}, we intend to show that a uniform fractional $BV$ estimate can be obtained for the solution of the regularized stochastic parabolic problem given by
\begin{align} du_\eps(t,x) + \mbox{div}_x F_\eps(u_\eps(t,x))\,dt =  
  \int_{|z|> 0} \eta_\eps(x,u_\eps(t,x);z)\tilde{N}(dz, dt)+ \eps \Delta_{xx} u_\eps(t,x) \,dt,\label{eq:stability-3}
  \end{align} 
  where $F_\eps,~~\eta_\eps$ satisfy \eqref{eq:regularization}. Regarding equation \eqref{eq:stability-3}, we mention that existence and regularity 
  of the solution to the problem \eqref{eq:stability-3} has been studied in \cite{kbm2014}.
We start with a deterministic lemma, related to the estimation of the modulus of continuity
of a given integrable function, and also an useful link between Sobolev and Besov spaces. In fact, we have the following lemma, a proof of which can be found in \cite[Lemma $2$]{Chen-karlsen2012}.
  \begin{lem}\label{lem: deterministic-modulus-continuity}
    Let $h:\R^d\goto \R$ be a given integrable function, $ 0\le \phi \in C_c^\infty(\R^d)$ and \{$J_\delta\}_{\delta>0}$ be
     a sequence of symmetric mollifiers, i.e., $J_\delta(x)=\frac{1}{\delta^d} J(\frac{|x|}{\delta}),\, 0\le J \in C_c^\infty(\R)$,
    $\mbox{supp}(J) \subset [-1,1],\, J(-\cdot)= J(\cdot)$ and $\int J=1$. Then
    \begin{itemize}
     \item [(a)] For $r,s \in (0,1)$ with $r<s$, there exists a finite
    constant $C_1=C_1(J,d,r,s)$ such that
    \begin{align}
    \int_{\R_z^d}\int_{\R_x^d} | h(x+z)-h(x-z)| & J_\delta(z)\phi(x)\,dx\,dz \notag \\
    \le & C_1\,\delta^r \sup_{|z|\le \delta} |z|^{-s}\int_{\R_x^d} |h(x+z)-h(x-z)|\phi(x)\,dx.\label{lem:modulus-continuity-part-1}
    \end{align}
    \item[(b)] For $r,s \in (0,1)$ with $r<s$, there exists a finite
    constant $C_2=C_2(J,d,r,s)$ such that
    \begin{align}
   \sup_{|z|\le \delta}\int_{\R_x^d} & |h(x+z)-h(x)|\phi(x)\,dx \notag \\
     & \quad \le C_2 \delta^r \sup_{0<\delta\le 1} \delta^{-s}  \int_{\R_z^d}\int_{\R_x^d} | h(x+z)-h(x-z)| J_\delta(z)\phi(x)\,dx\,dz 
     + C_2 \delta^r ||h||_{L^1(\R^d)}.\label{lem:modulus-continuity-part-2}
    \end{align} 
 \end{itemize}
  \end{lem}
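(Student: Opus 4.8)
The plan is to treat the two parts separately, with part (a) being an elementary scaling computation and part (b) a Marchaud-type inverse inequality that is the real content. Throughout I would write $B(z) = \int_{\R^d}|h(x+z)-h(x-z)|\phi(x)\,dx$ for the symmetric first-difference functional and $\Phi(\delta) = \int_{\R^d}\int_{\R^d}|h(x+z)-h(x-z)|J_\delta(z)\phi(x)\,dx\,dz = \int_{\R^d} B(z)J_\delta(z)\,dz$ for its mollified average, and set $M = \sup_{0<\delta'\le 1}(\delta')^{-s}\Phi(\delta')$, so that the right-hand sides of \eqref{lem:modulus-continuity-part-1} and \eqref{lem:modulus-continuity-part-2} are $C_1\delta^r\sup_{|z|\le\delta}|z|^{-s}B(z)$ and $C_2\delta^r M + C_2\delta^r\|h\|_{L^1(\R^d)}$ respectively.

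For part (a), since $J_\delta$ is supported in $\{|z|\le\delta\}$, I would factor $B(z) = |z|^s\big(|z|^{-s}B(z)\big)\le |z|^s\sup_{|z|\le\delta}|z|^{-s}B(z)$ and integrate against $J_\delta$. The scaling $\int_{\R^d}|z|^s J_\delta(z)\,dz = \delta^s\int_{\R^d}|v|^s J(|v|)\,dv$ produces the factor $\delta^s$, and since $\delta\le 1$ and $r<s$ one has $\delta^s\le\delta^r$; this gives \eqref{lem:modulus-continuity-part-1} with $C_1=\int_{\R^d}|v|^s J(|v|)\,dv$.

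For part (b) the difficulty is that the left-hand side controls a \emph{forward} difference \emph{pointwise} in $z$, while the right-hand side only offers \emph{averages} of \emph{symmetric} differences; bridging these is exactly a Marchaud-type estimate. First I would pass from forward to symmetric differences via the exact identity $h(x+z)-h(x) = h(y+z/2)-h(y-z/2)$ with $y=x+z/2$; after the substitution $x=y-z/2$ and the comparison $\phi(y-z/2)\le C\phi(y)$ (which follows from $|\nabla\phi|\le C\phi$ and $|z|\le\delta\le 1$), this yields $\int_{\R^d}|h(x+z)-h(x)|\phi(x)\,dx\le C\,B(z/2)$, so it suffices to bound $\sup_{|w|\le\delta}B(w)$. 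Next, splitting $h=h_\delta+(h-h_\delta)$ with $h_\delta=h\ast J_\delta$, I would estimate $B(w)\le P(w)+Q$, where $P(w)=\int|h_\delta(x+w)-h_\delta(x-w)|\phi\,dx$ and $Q\lesssim\int|h-h_\delta|\phi\,dx$. For the smooth term I would write $h_\delta(x+w)-h_\delta(x-w)=\int_{-1}^1 w\cdot\nabla h_\delta(x+tw)\,dt$ and exploit the oddness of $\nabla J_\delta$ to rewrite the gradient purely through symmetric differences, namely $\nabla h_\delta(y)=-\tfrac12\int[h(y+u)-h(y-u)]\nabla J_\delta(u)\,du$; estimating $\int B(u)|\nabla J_\delta(u)|\,du$ by $C\delta^{-1-d}\int_{|u|\le\delta}B(u)\,du$ and comparing the latter with $\Phi$ at a comparable scale (using positivity of $J$ near the origin) gives $P(w)\lesssim \tfrac{|w|}{\delta}\delta^s M\le \delta^s M$. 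For the mollification error I would symmetrize, $h-h_\delta=\tfrac12\int[2h(x)-h(x+u)-h(x-u)]J_\delta(u)\,du$, bound the integrand by $|h(x+u)-h(x)|+|h(x)-h(x-u)|$, apply the forward-to-symmetric reduction to each piece, and integrate against $J_\delta$ to obtain $Q\lesssim\Phi(\delta/2)\le M\delta^s$.

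Finally I would collect the estimates and handle the coarse regime that produces the $\|h\|_{L^1}$ term: whenever the comparison scale needed above would exceed $1$ (that is, $\delta$ bounded away from $0$), the averages no longer control $B$, and I would instead use the trivial bound $B(w)\le 2\|\phi\|_{L^\infty}\|h\|_{L^1(\R^d)}$, which, combined with $\delta^r\ge c$, is absorbed into the term $C_2\delta^r\|h\|_{L^1(\R^d)}$; for small $\delta$ the quantitative estimates above dominate. Using $\delta\le1$ and $r<s$ to replace every $\delta^s$ by $\delta^r$ then yields \eqref{lem:modulus-continuity-part-2}. \textbf{The main obstacle} is the smooth/mollification-error split in part (b): one must express \emph{all} forward differences through symmetric first differences (via the half-shift identity and the oddness of $\nabla J_\delta$) so that only the quantity $\Phi$ appearing on the right-hand side is ever used, and one must track carefully the scale at which $\Phi$ is evaluated, both to stay within the admissible range $\delta'\le1$ and to generate correctly the boundary contribution $\|h\|_{L^1(\R^d)}$.
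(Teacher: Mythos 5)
The paper does not actually prove this lemma: it is imported verbatim with the remark that ``a proof \ldots can be found in \cite[Lemma $2$]{Chen-karlsen2012}'', so there is no in-paper argument to compare against. Judged on its own terms, your outline is the standard Marchaud-type proof and the architecture is right: part (a) really is the one-line scaling computation you describe (with the implicit restriction $\delta\le 1$, without which $\delta^s\le\delta^r$ fails), and for part (b) you correctly identify the three essential ingredients --- the half-shift identity converting forward into symmetric differences, the split $h=h\ast J_\delta+(h-h\ast J_\delta)$, and the oddness of $\nabla J_\delta$ to express $\nabla(h\ast J_\delta)$ through symmetric first differences only.

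There is, however, one genuine gap as written. Every change of variables in your argument ($x=y-z/2$ in the half-shift step, $y=x+tw$ in the mean-value step) evaluates the test function at a translated point, and you dispose of this with ``$\phi(y-z/2)\le C\phi(y)$, which follows from $|\nabla\phi|\le C\phi$''. That hypothesis is not part of the lemma --- it assumes only $0\le\phi\in C_c^\infty(\R^d)$ --- and in fact no nonzero compactly supported smooth function can satisfy $|\nabla\phi|\le C\phi$ (integrating along a ray from a point where $\phi$ vanishes forces $\phi\equiv 0$); the weights with that property used elsewhere in the paper are the exponentially decaying, non-compactly-supported ones of the Main Theorem. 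The repair is standard and reshapes the conclusion in exactly the way the statement predicts: bound $|\phi(y-w)-\phi(y)|\le \norm{\nabla\phi}_{L^\infty}|w|$ and control the resulting error by $2\norm{\nabla\phi}_{L^\infty}|w|\,\norm{h}_{L^1(\R^d)}\le C\delta\norm{h}_{L^1(\R^d)}\le C\delta^r\norm{h}_{L^1(\R^d)}$; this, rather than only the coarse-$\delta$ regime you invoke at the end, is the principal source of the term $C_2\delta^r\norm{h}_{L^1(\R^d)}$ in \eqref{lem:modulus-continuity-part-2}. A second, more minor point: your comparison of $\delta^{-1-d}\int_{|u|\le\delta}B(u)\,du$ with $\sup_{0<\delta'\le 1}(\delta')^{-s}\Phi(\delta')$ requires $J$ to be bounded below near the origin (or a dyadic covering argument), which the stated hypotheses $0\le J$, $\supp J\subset[-1,1]$, $\int J=1$ do not guarantee; you flag this parenthetically, but it should be made an explicit (and harmless) normalization of the mollifier.
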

  
  Now we are in a position to state and prove a theorem regarding fractional BV estimation of solutions of \eqref{eq:stability-3}.
\begin{thm}[Fractional BV estimate]
  \label{thm:fractional BV estimate}
 Let the assumptions ~\ref{A1}, ~\ref{A2}, ~\ref{B3}, ~\ref{B4}, and ~\ref{A4'} hold. Let $u_\eps$ be a solution of \eqref{eq:stability-3} with the initial data $u_0(x)$ belongs to the Besov space $B^{\mu}_{1, \infty} (\R^d)$ for some $\mu \in (\frac12,1)$. 
Moreover, we assume that $F_\eps^{\prime\prime} \in L^\infty$. Then, for fixed $T>0$ and $R>0$, there exits a constant $C(T,R)$, 
independent of $\eps$, such that for any $0< t <T$,
 \begin{align*}
  \sup_{|y| \le \delta} E\Big[\int_{x\in K_R} \big|u_\eps(t,x+y)-u_\eps(t,x)\big|\,dx\Big] \le C(T,R)\,\delta^r,
 \end{align*} 
for some $r \in (0,\frac{1}{2})$ and $K_{R}:=\{x: |x| \le R\}$.
\end{thm}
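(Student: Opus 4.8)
The plan is to deduce the stated modulus-of-continuity bound from a quantitative H\"older-type estimate on the \emph{symmetrised} difference of $u_\eps$, the latter obtained by an It\^o--L\'evy computation on the doubled solution. Fix $0\le\phi\in C_c^\infty(\rd)$ with $\phi\ge1$ on $K_R$ and $|\grad\phi|\le C\phi$, $|\Delta\phi|\le C\phi$, let $z$ denote the spatial shift (writing $z'$ for the jump variable, to avoid a clash), and set
\[
\Theta(t,\delta):=E\Big[\int_{\rd}\int_{\rd}\big|u_\eps(t,x+z)-u_\eps(t,x-z)\big|\,J_\delta(z)\,\phi(x)\,dx\,dz\Big].
\]
Applying part (b) of Lemma~\ref{lem: deterministic-modulus-continuity} pathwise with $h=u_\eps(t,\cdot)$, taking expectations, and using $\sup_\eps\sup_{t\le T}E\|u_\eps(t,\cdot)\|_{L^1(\rd)}<\infty$ from Proposition~\ref{prop:vanishing viscosity-solution}, it suffices to prove $\Theta(t,\delta)\le C(T)\,\delta^{\mu}$ for $0<\delta\le1$, $0<t<T$, with $C(T)$ independent of $\eps$; the Besov hypothesis $u_0\in B^{\mu}_{1,\infty}(\rd)$ supplies the initial bound $\Theta(0,\delta)\le C\delta^{\mu}$, and then any $r\in(0,1/2)\subset(0,\mu)$ is admissible in the Lemma.

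To estimate $\Theta$ I would write $p=u_\eps(t,x+z)$, $q=u_\eps(t,x-z)$, $w=p-q$, apply the It\^o--L\'evy formula to $\beta_\xi(w)$, integrate against $J_\delta(z)\phi(x)$, take expectations and integrate in time; the quantity $|w|$ is recovered at the end from $|w|\le\beta_\xi(w)+M_1\xi$ at the harmless cost $M_1\xi\|\phi\|_{L^1}$. The compensated Poisson integral is a martingale and vanishes in expectation; by convexity $\eps\,\beta_\xi'(w)\Delta_xw\le\eps\,\Delta_x\beta_\xi(w)$, so two integrations by parts in $x$ and $|\Delta\phi|\le C\phi$ bound the viscous term by $C\eps\int_0^t\Theta(s,\delta)\,ds$, which is harmless. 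This leaves the flux term and the nonlocal noise term.

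The flux term is the crux: no uniform $BV$ bound is available here, and the naive Kru\v{z}kov remainder carries an uncontrolled factor $E[\int|\grad u_\eps|]$. The remedy is to exploit the symmetry of the doubling. Passing to $\xi_\pm=x\pm z$ (so that $p$ depends only on $\xi_+$ and $q$ only on $\xi_-$) one verifies the \emph{exact} identity, free of any leftover gradient,
\[
-\beta_\xi'(w)\big(\mathrm{div}_xF_\eps(p)-\mathrm{div}_xF_\eps(q)\big)=-\mathrm{div}_{\xi_+}F_\eps^{\beta}(p,q)-\mathrm{div}_{\xi_-}F_\eps^{\beta}(q,p),
\]
and, using $\mathrm{div}_{\xi_\pm}=\tfrac12(\mathrm{div}_x\pm\mathrm{div}_z)$, rewrites the right-hand side as $-\tfrac12\mathrm{div}_x[F_\eps^{\beta}(p,q)+F_\eps^{\beta}(q,p)]-\tfrac12\mathrm{div}_z[F_\eps^{\beta}(p,q)-F_\eps^{\beta}(q,p)]$. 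Integrating the first piece against $J_\delta\phi$ and transferring $\mathrm{div}_x$ onto $\phi$ gives $\le C\|F_\eps'\|_\infty\int\int|w|J_\delta\phi$, a Gronwall term. For the second piece I would integrate by parts in $z$ and invoke the \emph{antisymmetry} estimate obtained by integrating \eqref{estimate:derivative-entropy-flux} (valid verbatim with $F_\eps$), namely $|F_\eps^{\beta}(p,q)-F_\eps^{\beta}(q,p)|\le M_2\,\xi\,\|F_\eps''\|_\infty|w|$; since $|\grad_z J_\delta|\le C\delta^{-1}\hat J_\delta$ for a comparable scale-$\delta$ kernel $\hat J_\delta$, this term is bounded by $C\,\xi\,\delta^{-1}\|F_\eps''\|_\infty$ times a scale-$\delta$ difference quantity of $u_\eps$, which the eventual choice $\xi\sim\delta$ turns into another term of Gronwall type (comparability of the two scale-$\delta$ kernels being exactly what part (a) of Lemma~\ref{lem: deterministic-modulus-continuity} provides). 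Crucially, this route never reproduces $E[\int|\grad u_\eps|]$, so the argument closes without a $BV$ bound.

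Finally, the nonlocal term has integrand given by the second-order increment with jump $b=\eta_\eps(x+z,p;z')-\eta_\eps(x-z,q;z')$, and \ref{B3} gives $|b|\le(\lambda^*|w|+2K|z|)(|z'|\wedge1)$. Splitting $b$ exactly as in the treatment of $\mathcal{A}_5$ in Section~\ref{cont-depen-estimate}, the $\lambda^*|w|$-part is controlled, through the bound $0\le w\le(1-\lambda^*)^{-1}(w+\rho b)+\cdots$ and $\int(1\wedge|z'|^2)\nu(dz')<\infty$ from \ref{A4'}, by $C\xi$ plus a Gronwall term, while the $2K|z|$-part is $\le CK^2|z|^2\xi^{-1}(1\wedge|z'|^2)$ and, since $|z|\le\delta$ on $\mathrm{supp}\,J_\delta$, integrates to $\le CK^2\delta^2\xi^{-1}$. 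The choice $\xi=\delta$ balances this $O(\delta^2/\xi)$ noise contribution against the flux-$z$ term, and collecting all estimates yields $\Theta(t,\delta)\le\Theta(0,\delta)+C\int_0^t\Theta(s,\delta)\,ds+C\delta\,t$. A Gronwall argument together with $\Theta(0,\delta)\le C\delta^{\mu}$ then gives $\Theta(t,\delta)\le C(T)\delta^{\mu}$ (as $\mu<1$), which is the required bound. The only genuinely new difficulty, beyond Section~\ref{cont-depen-estimate}, is this $BV$-free treatment of the flux via the $\xi_\pm$-symmetrisation and the smallness of the antisymmetric entropy flux; the attainable exponent is governed by the data regularity $\mu$ and by the linear-in-$\delta$ noise forcing under the balance $\xi\sim\delta$.
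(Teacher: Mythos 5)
Your overall architecture is the same as the paper's: double the spatial variable with the symmetrized test function $J_\delta\bigl(\tfrac{x-y}{2}\bigr)\phi\bigl(\tfrac{x+y}{2}\bigr)$, apply the It\^o--L\'evy formula to $\beta_\xi$ of the difference, absorb the viscous term by convexity, split the jump amplitude via \ref{B3} into a $\lambda^*|w|$ part (handled by the $(1-\lambda^*)^{-1}$ trick, giving $C\xi$) and a $K|x-y|$ part (giving $C\delta^2/\xi$), and finish with Gronwall and Lemma~\ref{lem: deterministic-modulus-continuity}. The splitting of the flux into a symmetric piece hitting $\grad\phi$ and an antisymmetric piece hitting $\grad_z J_\delta$, controlled through $|F_\eps^\beta(p,q)-F_\eps^\beta(q,p)|\le C\xi|p-q|$, is also exactly the paper's decomposition.

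The gap is in your treatment of the antisymmetric flux term and the consequent choice $\xi\sim\delta$. After integrating by parts in $z$, that term is bounded by $C\xi\delta^{-1}$ times $\int_0^t E\bigl[\iint|u_\eps(s,x+z)-u_\eps(s,x-z)|\,\hat J_\delta(z)\phi(x)\,dx\,dz\bigr]ds$ with $\hat J_\delta$ proportional to $|\grad J|(\cdot/\delta)\delta^{-d}$. This is \emph{not} $C\int_0^t\Theta(s,\delta)\,ds$: since $J$ is smooth and compactly supported, $|\grad J|\le CJ$ cannot hold, so the $\hat J_\delta$-smoothed quantity is not dominated by the $J_\delta$-smoothed one, and your Gronwall does not close. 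Part (a) of Lemma~\ref{lem: deterministic-modulus-continuity} does not supply the ``comparability of the two scale-$\delta$ kernels'' you invoke; it only bounds a kernel-smoothed difference by the sup-type seminorm $\sup_{|z|\le\delta}|z|^{-s}\int|h(x+z)-h(x-z)|\phi\,dx$, which is precisely the quantity the theorem is trying to control, so feeding it back in is circular. The paper sidesteps this entirely: it bounds the antisymmetric term crudely by $C\,\xi\,\delta^{-1}$ times a quantity that is merely \emph{bounded} (via the uniform moment estimates \eqref{uniform-estimates}), accepts the resulting error $O(\xi/\delta)$, and then balances $\xi/\delta$ against $\delta^2/\xi$ by taking $\xi=\delta^{3/2}$, yielding $\Theta(t,\delta)\lesssim\Theta(0,\delta)+\sqrt{\delta}$. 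This is exactly why the hypotheses demand $\mu>\tfrac12$ and the conclusion only asserts $r<\tfrac12$: one applies Lemma~\ref{lem: deterministic-modulus-continuity}(b) with $s=\tfrac12$. Your claimed inequality $\Theta(t,\delta)\le\Theta(0,\delta)+C\int_0^t\Theta\,ds+C\delta\,t$, if it were true, would give $r$ up to $\mu$ and render the restriction $\mu>\tfrac12$ pointless, which should have been a warning sign. To repair your argument, replace the Gronwall absorption of the antisymmetric term by the paper's crude $O(\xi/\delta)$ bound and rebalance with $\xi=\delta^{3/2}$; the rest of your proposal then goes through.
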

  
 \begin{proof}
Let $0\le \phi(x)\in C_c^2(\R^d)$ be any test function such that $|\grad \phi(x)| \le C \phi(x)$ and $|\Delta \phi(x)|\le C \phi(x)$ for some constant
$C>0$. Let $J_\delta$ be a sequence of mollifier in $\R^d$ as mentioned in Lemma \ref{lem: deterministic-modulus-continuity}.
Consider the test function 
$$\psi_\delta(x,y): =J_\delta \left(\frac{x-y}{2} \right)\, \phi \left(\frac{x+y}{2} \right).$$ 
Sutracting two solutions $u_\eps(t,x)$, $u_\eps(t,y)$ of \eqref{eq:stability-3}, 
and applying It\^{o}-L\'{e}vy formula to that resulting equations, we obtain
\begin{align}
 & \beta_\xi\big( u_\eps(t,x) -u_\eps(t,y)\big)-
 \beta_\xi\big( u_\eps(0,x) -u_\eps(0,y)\big) \notag \\
 =& \int_{s=0}^t \beta_\xi^\prime\big(u_\eps(s,x)-u_\eps(s,y)\big)\Big(\mbox{div}_y F_\eps(u_\eps(s,y))- \mbox{div}_x
 F_\eps(u_\eps(s,x))\Big)\,ds \notag \\ 
   & \hspace{4cm}+ \eps\, \int_{r=0}^t \beta_\xi^\prime \big( u_\eps(r,x) -u_\eps(r,y)\big) \Big(\Delta_{xx}u_\eps(r,x)-\Delta_{yy} u_\eps(r,y)\Big)\, dr \notag \\
 & + \int_{r=0}^t\int_{|z| > 0} \int_{\rho=0}^1 (1-\rho)
 \beta_\xi^{\prime \prime}\Big( u_\eps(r,x) -u_\eps(r,y)+\rho\big(\eta_\eps(x,u_\eps(r,x);z)-\eta_\eps(y,u_\eps(r,y);z)\big)\Big)\notag
\\&\hspace{6cm}\times\big| \eta_\eps(x,u_\eps(r,x);z)
-\eta_\eps(y,u_\eps(r,y);z)\big|^2\,d\rho\,\nu(dz) \,dr\notag \\
& +  \int_{r=0}^t\int_{|z|> 0} 
 \Big[\beta_\xi \big( u_\eps(r,x) -u_\eps(r,y)+\eta_\eps(x,u_\eps(r,x);z)-\eta_\eps(y,u_\eps(r,y);z)\big) \notag \\
 &\hspace{8.5cm} - \beta_\xi \big( u_\eps(r,x) -u_\eps(r,y)\big)\Big]\, \tilde{N}(dz,dr). \notag 
\end{align}  To this end, we see that
\begin{align}
 \beta_\xi^\prime(u-v)\big(\Delta_{xx} u- \Delta_{yy} v\big) = \Big(\Delta_{xx}+ 2 \grad_x \cdot \grad_y + \Delta_{yy}\Big) \beta_\xi(u-v) 
 -\beta_\xi^{\prime\prime}(u-v)|\grad_x u-\grad_y v|^2. \label{expresion for beta-1}
\end{align}
Moreover, a simple calculation reveals that
\begin{align*}
 \Big(\Delta_{xx}+ 2 \grad_x \cdot \grad_y + \Delta_{yy}\Big) \psi_\delta(x,y)= \Delta \phi(\frac{x+y}{2})J_\delta(\frac{x-y}{2}), \\
\big(\grad_x + \grad_y\big) \psi_\delta(x,y)=\grad \phi(\frac{x+y}{2}) J_\delta(\frac{x-y}{2}).  
\end{align*}
Using convexity of $\beta_\xi$ and  \eqref{expresion for beta-1}, we have
\begin{align}
 &\int_{\R_y^d}\int_{\R_x^d} \beta_\xi \big( u_\eps(t,x) -u_\eps(t,y)\big) \psi_\delta(x,y) \,dx \,dy
- \int_{\R_y^d}\int_{\R_x^d} \beta_\xi\big( u_\eps(0,x) -u_\eps(0,y)\big) \psi_\delta(x,y) \,dx\, dy \notag \\
& \le \int_{s=0}^t \int_{\R_y^d}\int_{\R_x^d} F_\eps^\beta \big(u_\eps(s,x),u_\eps(s,y)\big)\cdot \grad \phi(\frac{x+y}{2}) J_\delta(\frac{x-y}{2})\,dx\,dy\,ds \notag \\
  & + \int_{s=0}^t \int_{\R_y^d}\int_{\R_x^d}\Big(F_\eps^\beta\big(u_\eps(s,y),u_\eps(s,x)\big)- F_\eps^\beta \big(u_\eps(s,x),u_\eps(s,y)\big)\Big)\cdot \grad_y \psi_\delta(x,y)\,dx
  \,dy\,ds \notag \\
  &+  \int_{r=0}^t \int_{\R_y^d}\int_{\R_x^d} 
  \eps\,\beta_\xi\big( u_\eps(r,x) -u_\eps(r,y)\big) J_\delta(\frac{x-y}{2}) \Delta \phi(\frac{x+y}{2})\, dx\, dy\, dr \notag \\
 & + \int_{r=0}^t\int_{|z| > 0} \int_{\R_y^d}\int_{\R_x^d} \int_{\rho=0}^1
 \beta_\xi^{\prime \prime}\Big( u_\eps(r,x) -u_\eps(r,y)+\rho\big(\eta_\eps(x,u_\eps(r,x);z)-\eta_\eps(y,u_\eps(r,y);z)\big)\Big)\notag \\
&\hspace{3cm}\times\big| \eta_\eps(x,u_\eps(r,x);z)
-\eta_\eps(y,u_\eps(r,y);z)\big|^2  \psi_\delta(x,y)\,d\rho\,dx \,dy\,\nu(dz) \,dr\notag \\
& +  \int_{r=0}^t\int_{|z|> 0} \int_{\R_y^d}\int_{\R_x^d} 
 \Big[\beta_\xi \big( u_\eps(r,x) -u_\eps(r,y)+\eta_\eps(x,u_\eps(r,x);z)-\eta_\eps(y,u_\eps(r,y);z)\big) \notag \\
 &\hspace{4cm} - \beta_\xi \big( u_\eps(r,x) -u_\eps(r,y)\big)\Big] 
\psi_\delta(x,y) \,dx \,dy \, \tilde{N}(dz,dr). \notag 
\end{align} 
Notice that since $\Big| F_\eps^\beta(u,v)-F_\eps^\beta(v,u)\Big| \le C ||F^\prime||_{\infty}\,\xi |u-v|$, we obtain
\begin{align}
& E \Big[\int_{\R_y^d}\int_{\R_x^d} \beta_\xi\big( u_\eps(t,x) -u_\eps(t,y)\big) \psi_\delta(x,y) \,dx \,dy\Big]
- E \Big[\int_{\R_y^d}\int_{\R_x^d} \beta_{\xi}\big( u_\eps(0,x) -u_\eps(0,y)\big) \psi_\delta(x,y) \,dx\, dy\Big] \notag \\
& \le C ||F^\prime||_{\infty}\int_{s=0}^t\,E \Big[\int_{\R_y^d}\int_{\R_x^d}  \big|u_\eps(s,x)-u_\eps(s,y)\big| 
\phi(\frac{x+y}{2}) J_\delta(\frac{x-y}{2})\,dx\,dy\Big]\,ds \notag \\
 & +C ||F^\prime||_{\infty} \xi\,E\Big[\int_{s=0}^t \int_{\R_y^d}\int_{\R_x^d} \big|u_\eps(s,x)-u_\eps(s,y)\big| \phi(\frac{x+y}{2})
 J_\delta(\frac{x-y}{2})\,dx\,dy\,ds\Big] \notag \\
 & + C ||F^\prime||_{\infty} \xi\,E \Big[\int_{s=0}^t \int_{\R_y^d}\int_{\R_x^d} \big|u_\eps(s,x)-u_\eps(s,y)\big| \phi(\frac{x+y}{2})
|\grad_y J_\delta(\frac{x-y}{2})|\,dx\,dy\,ds\Big] \notag \\
  &+   C\,\eps\, \int_{r=0}^t E \Big[ \int_{\R_y^d}\int_{\R_x^d} 
  \big| u_\eps(r,x) -u_\eps(r,y)\big| J_\delta(\frac{x-y}{2}) \phi(\frac{x+y}{2})\, dx\, dy\Big]\, dr \notag \\
 & + E \Big[\int_{r=0}^t\int_{|z| > 0} \int_{\R_y^d}\int_{\R_x^d} \int_{\rho=0}^1
 \beta_\xi^{\prime \prime}\Big( u_\eps(r,x) -u_\eps(r,y)+\rho\big(\eta_\eps(x,u_\eps(r,x);z)-\eta_\eps(y,u_\eps(r,y);z)\big)\Big)\notag
\\&\hspace{4.5cm}\times\big| \eta_\eps(x,u_\eps(r,x);z)
-\eta_\eps(y,u_\eps(r,y);z)\big|^2  \psi_\delta(x,y)\,d\rho\,dx \,dy\,\nu(dz) \,dr\Big], \label{eq:bv-frac}
\end{align} where we have used  $|\Delta \phi(x)| \le C \phi(x)$.

As before, one can use Cauchy-Schwartz inequality along with uniform moment estimate \eqref{uniform-estimates} to conclude 
 \begin{align}
 & C ||F^\prime||_{\infty} \xi\,E \Big[\int_{s=0}^t \int_{\R_y^d}\int_{\R_x^d} \big|u_\eps(s,x)-u_\eps(s,y)\big| \phi(\frac{x+y}{2})
 J_\delta(\frac{x-y}{2})\,dx\,dy\,ds\Big] \notag \\
 & + C ||F^\prime||_{\infty} \xi\,E \Big[\int_{s=0}^t \int_{\R_y^d}\int_{\R_x^d} \big|u_\eps(s,x)-u_\eps(s,y)\big| \phi(\frac{x+y}{2})
|\grad_y J_\delta(\frac{x-y}{2})|\,dx\,dy\,ds\Big] \notag \\
\le & C ||F^\prime||_{\infty} \big( \xi + \frac{\xi}{\delta} \big) ||\phi||_{L^{\infty}(\R^d)} \sqrt{t}. \label{esti:middle term}
 \end{align}
Next, we focus on the last term of \eqref{eq:bv-frac}. To estimate that term, we first let
 \begin{align*}
 a= u_\eps(t,x)-u_\eps(t,y) \quad \text{and} \quad 
 b=\eta_\eps(x,u_\eps(t,x);z)-\eta_\eps(y,u_\eps(t,y);z).
 \end{align*} 
 Observe that
\begin{align}
  b^2\beta_\xi^{\prime\prime}(a+\rho\,b)&=(\eta_\eps(x,u_\eps(t,x);z)-\eta_\eps(y,u_\eps(t,y);z))^2\,
  \beta_\xi^{\prime\prime}(a+\rho\,b)\notag \\
& \leq \Big(|u_\eps(t,x)-u_\eps(t,y)|^2 + K^2|x-y|^2\Big)(1\wedge | z|^2)\,\beta_\xi^{\prime\prime}(a+\rho\,b)\notag \\
&= \Big( a^2 + K^2|x-y|^2\Big) \,\beta_\xi^{\prime\prime}(a+\rho\,b)\, (1\wedge | z|^2). \label{eq:nonlocal-estim}
\end{align}
 As before (cf. \ref{estimate-a}), one can use assumption~\ref{B3} on $\eta(x,u;z)$ to conclude
    $$ 0 \le a \le (1-\lambda^*)^{-1} \big(a+ \rho b+ K|x-y|\big).$$
In view of \eqref{eq:nonlocal-estim}, we have 
\begin{align*}
 b^2 \beta''_\xi (a+\rho\, b) \le & (1-\lambda^*)^{-2} \big(a+\rho\,b+ K|x-y|\big)^2 \, \beta''_\xi(a+\rho b)
 \,(|z|^2\wedge 1) + \frac{ K|x-y|^2}{\xi}\,(|z|^2\wedge 1) \notag\\
  &\le    2(1-\lambda^*)^{-2} (a+ \rho b)^2 \beta''_\xi(a+\rho b)(|z|^2\wedge 1) +C(K,\lambda^*)
  \frac{|x-y|^2}{\xi}(|z|^2\wedge 1 )\notag\\
 &  \le \Big[2(1-\lambda^*)^{-2} C\xi +C(K,\lambda^*) \frac{|x-y|^2}{\xi}\Big](|z|^2\wedge 1 ),
\end{align*}
 and hence
\begin{align}
 & E \Big[\int_{r=0}^t\int_{|z|>0} \int_{\R_y^d} \int_{\R_x^d}\int_{\rho=0}^1  b^2 \beta''_\xi (a+\rho\, b)
 \psi_\delta(x,y)\,d\rho \,dx\,dy\, \nu(dz)\,dr\Big] \notag \\
 & \quad \le E \Big[\int_{r=0}^t\int_{|z|>0} \int_{\R_y^d} \int_{\R_x^d} 
 \Big\{2(1-\lambda^*)^{-2} C\xi +C(K,\lambda^*) \frac{|x-y|^2}{\xi}\Big\}(|z|^2\wedge 1 )
 \psi_\delta(x,y) \,dx\,dy\, \nu(dz)\,dr\Big] \notag \\
 &\qquad \le C_1\Big(\xi + \frac{\delta^2}{\xi}\Big)t\, ||\phi(\cdot)||_{L^\infty(\R^d)}.\label{esti:last term}
\end{align} 
Now we make use of \eqref{eq:approx to abosx}, \eqref{esti:middle term} to \eqref{esti:last term} 
 in \eqref{eq:bv-frac} and conclude
\begin{align}
 & E \Bigg[\int_{\R_y^d}\int_{\R_x^d}  \big| u_\eps(t,x) -u_\eps(t,y)\big|J_\delta(\frac{x-y}{2}) \phi(\frac{x+y}{2}) \,dx \,dy\Bigg] \notag \\
 & \qquad \le   E \Bigg[\int_{\R_y^d}\int_{\R_x^d} \big| u_\eps(0,x) -u_\eps(0,y)\big|J_\delta(\frac{x-y}{2}) \phi(\frac{x+y}{2}) \,dx\, dy\Bigg] \notag \\
 &  \quad + C\big(1+||F^\prime||_{\infty}\big)\int_{s=0}^t E \Big[\int_{\R_y^d}\int_{\R_x^d} \big|u_\eps(s,x)-u_\eps(s,y)\big| 
\phi(\frac{x+y}{2}) J_\delta(\frac{x-y}{2})\,dx\,dy\Big]\,ds  \notag \\
 &  \qquad \quad + C ||F^\prime||_{\infty} \big( \xi + \frac{\xi}{\delta}\big) ||\phi||_{L^{\infty}(\R^d)} \,\sqrt{t} 
  + C \big( \xi + \frac{\delta^2}{\xi}\big)\,t||\phi(\cdot)||_{L^\infty(\R^d)} + C \xi\,||\phi||_{L^1(\R^d)}.\notag
\end{align} 
A simple application of Gronwall's inequality reveals that
\begin{align}
  & E \Bigg[\int_{\R_y^d}\int_{\R_x^d}  \big| u_\eps(t,x) -u_\eps(t,y)\big|J_\delta(\frac{x-y}{2}) \phi(\frac{x+y}{2}) \,dx \,dy \Bigg]\notag \\
& \le  \exp\Big( t\,C\big(1+ ||F^\prime||_{L^\infty}\big)\Big) E \Big[\int_{\R_y^d}\int_{\R_x^d} \big| u_\eps(0,x) -u_\eps(0,y)\big|
J_\delta(\frac{x-y}{2}) \phi(\frac{x+y}{2}) \,dx\, dy\Big] \notag \\
   & +   \exp\Big( t\,C\big(1+ ||F^\prime||_{L^\infty}\big)\Big)\Big[C \Big( ||F^\prime||_{\infty} \big( \xi + \frac{\xi}{\delta}\big)\sqrt{t} 
  + \big( \xi + \frac{\delta^2}{\xi}\big)\,t \Big)||\phi||_{L^\infty(\R^d)}
  + C \xi\,||\phi||_{L^1(\R^d)}\Big].\label{eq:stochastic-final-8}
\end{align}  
Chosing $\xi = C \delta^\frac{3}{2}$ in \eqref{eq:stochastic-final-8}, we obtain
\begin{align*}
E \Big[\int_{\R_y^d}\int_{\R_x^d}  \big| u_\eps(t,x) & -u_\eps(t,y)\big|J_\delta(\frac{x-y}{2}) \phi(\frac{x+y}{2})\,dx\,dy\Big] \notag \\
 & \le   C(T) E \Big[\int_{\R_y^d}\int_{\R_x^d}  \big| u_\eps(0,x) -u_\eps(0,y)\big|
J_\delta(\frac{x-y}{2}) \phi(\frac{x+y}{2}) \,dx\, dy\Big] \notag \\
&   \hspace{4cm} + C(T) \Big( \big(\delta^\frac{3}{2}  +\sqrt{\delta}\big) ||\phi||_{L^\infty(\R^d)} +  \delta^\frac{3}{2}||\phi||_{L^1(\R^d)} \Big),
\end{align*}  for some constant $C(T)>0$, independent of $\eps$.

Now we make use of the following change of variables 
$$\bar{x}= \frac{x-y}{2}, \,\, \text{and} \,\, \, \bar{y}=\frac{x+y}{2},$$
to rewrite the above inequlity (dropping  the bar). The result is
\begin{align}
  E \Big[\int_{\R_y^d}\int_{\R_x^d}  \big| u_\eps(t,x+y) & -u_\eps(t,x-y)\big|J_\delta(y) \phi(x) \,dx \,dy\Big] \notag \\
& \le   C(T) E \Big[\int_{\R_y^d}\int_{\R_x^d}  \big| u_\eps(0,x+y) -u_\eps(0,x-y)\big|
J_{\delta}(y) \phi(x) \,dx\, dy\Big]\notag \\
& \hspace{3cm} + C(T) \Big( \big(\delta^\frac{3}{2}  +\sqrt{\delta}\big) ||\phi||_{L^\infty(\R^d)} +  \delta^\frac{3}{2}||\phi||_{L^1(\R^d)} \Big)
  \label{eq:stochastic-final-9}
\end{align}
In view of \eqref{lem:modulus-continuity-part-2} of the Lemma \ref{lem: deterministic-modulus-continuity}, we obtain
  for $r < \frac{1}{2}$
 \begin{align}
 \sup_{|y| \le \delta} E \Big[\int_{\R_x^d} & \big|u_\eps(t,x+y)  -u_\eps(t,x)\big|\phi(x)\,dx\Big] \notag \\
   & \le  C_2 \,\delta^r \sup_{0<\delta \le 1} \delta^{-\frac{1}{2}}  E \Big[\int_{\R_y^d}\int_{\R_x^d} \big|u_\eps(t,x+y)-u_\eps(t,x-y)\big|
  J_{\delta}(y)\phi(x)\,dx\,dy\Big]  \notag \\
  &\hspace{7.5cm}+ C_2 \delta^r E \Big[||u_\eps(t,\cdot)||_{L^1(\R^d)}\Big]. \label{eq:stochastic-final-10}
  \end{align} 
Again, by \eqref{lem:modulus-continuity-part-1} of the Lemma \ref{lem: deterministic-modulus-continuity} and
by \eqref{eq:stochastic-final-9}, we see that for $r=\frac12$ and $s>\frac{1}{2}$
\begin{align}
   \sup_{0<\delta \le 1} \delta^{-\frac{1}{2}}  E \Big[\int_{\R_y^d}\int_{\R_x^d} & \big|u_\eps(t,x+y)  -u_\eps(t,x-y)\big|
  J_\delta(y)\phi(x)\,dx\,dy\Big] \notag \\
    & \le  C(T) \sup_{0<\delta \le 1} \delta^{-\frac{1}{2}} E \Big[\int_{\R_y^d}\int_{\R_x^d} \big|u_\eps(0,x+y)-u_\eps(0,x-y)\big| J_\delta(y)
  \phi(x)\,dx\,dy\Big] \notag \\
  & \hspace{6.5cm} + C(T)\Big( ||\phi||_{L^\infty(\R^d)} + ||\phi||_{L^1(\R^d)} \Big) \notag \\
    & \le  C(T) \, C_1\, \sup_{|y| \le \delta}\Bigg( |y|^{-s} \, E\Big[\int_{\R_x^d}  \big|u_\eps(0,x+y)-u_\eps(0,x)\big|
  \phi(x)\,dx\Big] \Bigg)  \notag \\
  & \hspace{6.5cm}+ C(T)\Big( ||\phi||_{L^\infty(\R^d)} + ||\phi||_{L^1(\R^d)} \Big)  \notag \\
   & \le  C(T)\, E\Big[||u_0||_{B_{1, \infty}^{\mu}(\R^d)}\Big] \,||\phi||_{L^\infty(\R^d)} +
  C(T)\Big( ||\phi||_{L^\infty(\R^d)} + ||\phi||_{L^1(\R^d)} \Big).\label{eq:stochastic-final-11}
 \end{align} 
Now we combine \eqref{eq:stochastic-final-10} and \eqref{eq:stochastic-final-11} to obtain
 \begin{align*}
    \sup_{|y| \le \delta} E \Big[\int_{\R_x^d} & \big|u_\eps(t,x+y)-u_\eps(t,x)\big|\phi(x)\,dx\Big] \notag \\
   &  \le C(T)\,\delta^r\Bigg[ \bigg( E\Big[||u_0||_{B_{1, \infty}^{\mu}(\R^d)}\Big] + 1 \bigg) ||\phi||_{L^\infty(\R^d)} + ||\phi||_{L^1(\R^d)}\Bigg] 
   + C_2\, \delta^r E \bigg[||u_\eps(t,\cdot)||_{L^1(\R^d)}\bigg].
 \end{align*}
 Let $K_{R}=\{x: |x| \le R\}$. Choose $\phi \in C_c^\infty(\R^d)$ such that $\phi(x)=1$ on $K_R$. Then,
  for $r < \frac{1}{2}$, we have
 \begin{align*}
   \sup_{|y| \le \delta} E\Bigg[\int_{K_R} \big|u_\eps(t,x+y)-u_\eps(t,x)\big|\,dx\Bigg] \le C(T,R)\,\delta^r,
 \end{align*} 
which completes the proof.
\end{proof}

In view of the well-posedness results from \cite{kbm2014}, we can finally claim  the existence of entropy solutions for \eqref{eq:levy_stochconservation_laws_spatial} that satisfies the fractional $BV$ estimate in Theorem \ref{thm:fractional BV estimate}. In other words, we have the following theorem.

\begin{thm}
Suppose that the assumptions ~\ref{A2}, ~\ref{A3}, ~\ref{A4'}, ~\ref{B3}, and ~\ref{B4} hold and the initial data $u_0$ belong to the
Besov space $B^{\mu}_{1, \infty} (\R^d)$ for some $\mu \in (\frac12,1)$ and
\begin{align}
\label{eq:cond}
 E \Bigg[\norm{u_0}^p_{L^p(\R^d)} + \norm{u_0}^p_{L^2(\R^d)}  \Bigg] < \infty, \,\, \text{for $p=1,2,\cdots.$}
\end{align}
\begin{itemize}
\item [(a)] Then given initial data $u_0$, there exists an entropy solution of \eqref{eq:levy_stochconservation_laws_spatial} such that for any $t\ge0$,
\begin{align*}
 E \Big[\norm{u(t,\cdot)}^p_{L^p(\R^d)} \Big] < \infty, \,\, \text{for $p=1,2,\cdots.$}
\end{align*}
Moreover, there exists a constant $C_T^R$ such that, for any $0< t < T$,
\begin{align*}
\sup_{|y| \le \delta} E\Bigg[\int_{K_R} \big|u(t,x+y)-u(t,x)\big|\,dx\Bigg] \le C_T^R \,\delta^r,
\end{align*}
for some $r \in (0, \frac12)$ and $K_{R}:=\{x: |x| \le R\}$.
\item [(b)] Let the initial data $u_0$ only satisfies \eqref{eq:cond}. Then there exists an entropy solution of \eqref{eq:levy_stochconservation_laws_spatial} such that for any $t\ge0$,
\begin{align*}
 E \Big[\norm{u(t,\cdot)}^p_{L^p(\R^d)} \Big] < \infty, \,\, \text{for $p=1,2,\cdots.$}
\end{align*}
\end{itemize}
\end{thm}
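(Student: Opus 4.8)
The plan is to realize both entropy solutions as limits of the viscous approximations \eqref{eq:stability-3} and, in case (a), to carry the fractional $BV$ bound of Theorem~\ref{thm:fractional BV estimate} across the limit $\eps\downarrow 0$. For each fixed $\eps>0$ the well-posedness theory of \cite{kbm2014} supplies a smooth solution $u_\eps$ of \eqref{eq:stability-3}, together with moment estimates of the type \eqref{uniform-estimates} that are \emph{uniform} in $\eps$; the uniformity is inherited from the growth and Lipschitz bounds \ref{B3}--\ref{B4}, which are not degraded by the regularization \eqref{eq:regularization}. These bounds alone feed the Young-measure compactness device used in the proof of Theorem~\ref{thm:existence}: along a subsequence $\{u_\eps\}$ generates a Young measure whose reduction, forced to be Dirac by the uniqueness theory of \cite{kbm2014}, identifies the unique $L^p(\R^d)$-valued entropy solution $u$ of \eqref{eq:levy_stochconservation_laws_spatial} and yields strong convergence $u_\eps\to u$ in $L^1_{\loc}(\Omega\times(0,T)\times\R^d)$.

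First I would record the integrability assertion, common to (a) and (b): the uniform bound $\sup_\eps\sup_{0\le t\le T}E[\|u_\eps(t,\cdot)\|_p^p]<\infty$ passes to $u$ by Fatou's lemma along the strongly convergent subsequence, giving $E[\|u(t,\cdot)\|_p^p]<\infty$ for every $p$. For part (b) this, together with the existence of the Young-measure limit, is the entire claim, since no regularity beyond integrability is asserted and Theorem~\ref{thm:fractional BV estimate} is neither available nor needed.

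For part (a), with $u_0\in B^\mu_{1,\infty}(\R^d)$, Theorem~\ref{thm:fractional BV estimate} furnishes the uniform estimate
\[
\sup_{|y|\le\delta}E\Big[\int_{K_R}|u_\eps(t,x+y)-u_\eps(t,x)|\,dx\Big]\le C(T,R)\,\delta^r ,
\]
with $C(T,R)$ independent of $\eps$. I would then transfer this to $u$ by invoking the strong convergence established above: along the subsequence $u_\eps\to u$ in $L^1_{\loc}$, so the translates $u_\eps(\cdot+y)$ converge as well, and Fatou's lemma applied to $|u(t,x+y)-u(t,x)|$ reproduces the same bound for the limit, uniformly in $|y|\le\delta$ and for a.e.\ $t$.

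The hard part is precisely this last transfer. The translation estimate is a \emph{two-point} (nonlocal) functional of the solution, coupling the values at $x+y$ and $x$, whereas Young-measure convergence controls only one-point statistics; convexity of $r\mapsto|r|$ therefore does not by itself yield lower semicontinuity of the estimate under the weak limit. The argument genuinely needs the Dirac reduction of \cite{kbm2014} to upgrade the convergence to strong $L^1_{\loc}$ before Fatou can be applied---this is the single step on which the preservation of the fractional regularity rests.
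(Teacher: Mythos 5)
Your argument is essentially the paper's own: the paper proves this theorem by a single appeal to the well-posedness/Young-measure theory of \cite{kbm2014} (exactly as in Theorem~\ref{thm:existence}) and then passes the uniform fractional $BV$ bound of Theorem~\ref{thm:fractional BV estimate} and the moment bounds \eqref{uniform-estimates} to the limit. Your proposal is correct and in fact more explicit than the paper, in particular in flagging that the translation estimate is a two-point functional whose transfer to the limit requires the strong $L^1_{\loc}$ convergence coming from the Dirac reduction rather than mere Young-measure convergence --- a point the paper leaves unstated.
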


\end{document}